\newtheorem{thm}{Theorem}[section]
\newtheorem{con}[thm]{Conjecture}
\newtheorem{lem}[thm]{Lemma}
\newtheorem{rem}[thm]{Remark}
\newcommand{\Pro}{{\mathbb{P}}}
\newcommand{\Real}{{\mathbb{R}}}
\newcommand{\Zet}{{\mathbb{Z}}}
\newcommand{\Nat}{{\mathbb{Z}^+}}
\newcommand{\Natz}{{\mathbb{Z}_0^+}}
\newcommand{\Fl}{{\mathrm{Fl}}}
\newcommand{\ab}{{\mathbf{a}}}
\newcommand{\bb}{{\mathbf{b}}}
\newcommand{\zb}{{\mathbf{z}}}
\newcommand{\yb}{{\mathbf{y}}}
\newcommand{\ssb}{{\mathbf{s}}}
\newcommand{\tb}{{\mathbf{t}}}
\newcommand{\obb}{{\overline{\mathbf{b}}}}
\newcommand{\ori}{{\mathbf{0}}}
\newcommand{\Oc}{{\mathcal{O}}}
\newcommand{\Fc}{{\mathcal{F}}}
\newcommand{\Cc}{{\mathcal{C}}}
\newcommand{\Dc}{{\mathcal{D}}}
\newcommand{\Zc}{{\mathcal{Z}}}
\newcommand{\Gc}{{\mathcal{G}}}
\newcommand{\Pc}{{\mathcal{P}}}
\newcommand{\Nc}{{\mathcal{N}}}
\newcommand{\Rc}{{\mathcal{R}}}
\newcommand{\Sc}{{\mathcal{S}}}
\newcommand{\Lc}{{\mathcal{L}}}
\newcommand{\Hc}{{\mathcal{H}}}
\begin{document}
\title{On the minimum number of high degree curves containing few points}
\author{Mario Huicochea\\CONACyT/UAZ}
\date{}
\keywords{Sylvester-Gallai type results, plane curves, Veronese map}
\email{dym@cimat.mx}
\begin{abstract}
Let $d,n\in\Nat$ and $A$ be a nonempty finite subset of $\Real^2$. A curve of degree $d$ is the zero set of a polynomial of degree $d$ in $\Real[x,y]$; denote by $\Cc_{d}$ the family of  curves of degree $d$. For any $C_1\in \Cc_{d}$, we say that $C_1$ is determined by $A$ if for any $C_2\in\Cc_{d}$ such that $C_2\cap A\supseteq C_1\cap A$, we have that $C_1=C_2$; we denote by $\Dc_d(A)$ the family of curves of degree $d$ determined by $A$. Write $\Oc_{d,n}(A):=\{C\in \Dc_{d}(A):\;|C\cap A|\leq n\}$. In this paper we state two Sylvester-Gallai type results. In the first one, we show that if there is no $C\in \Cc_d$ containing $A$, then 
\begin{equation*}
\left|\Oc_{d,\frac{3d^2-3d+4}{2}}(A)\right|=\Omega_{d}\left(|A|^{d}\right);
\end{equation*}
moreover we give a construction which shows that this lower bound is the best possible. In the second main result of this paper, it is shown that if $d\geq 3$, there is no $C\in \Cc_d$ containing $A$ and any subset $B$ of $A$ with $|B|=n$ is contained in at most one $C\in \Cc_d$, then 
\begin{equation*}
\left|\Oc_{d,2n+1-\binom{d+2}{2}}(A)\right|=\Omega_{n,d}\left(|A|^{\binom{d+2}{2}-3}\right);
\end{equation*}
furthermore we show that this lower bound is not trivial.
\end{abstract}
\maketitle

\section{Introduction}
In this paper $\Real, \Zet,\Nat, \Natz$ denote the set of real numbers,  integers, positive integers and nonnegative integers, respectively. For any set $X$, we denote by $\Pc(X)$ the family of subsets of $X$ and by $\Pc_n(X)$ the family of subsets $Y$ of $X$ such that $|Y|=n$.  For any $n,m\in\Zet$, we write $[n,m]:=\{k\in\Zet:\;n\leq k\leq m\}$. Let $d,n\in\Nat$.  A \emph{curve of degree $d$} is a subset $C$ of $\Real^2$ which is the zero set of a polynomial in $\Real[x,y]$ of degree $d$; we denote by $\Cc_{d}$ the family of  curves of degree $d$ in $\Real^2$. For each $A\in\Pc(\Real^2)$, we  say that $C_1\in\Cc_{d}$ is \emph{determined by $A$} if for any $C_2\in \Cc_{d}$ satisfying that $C_2\cap A\supseteq C_1\cap A$, we have that $C_1=C_2$; we denote by $\Dc_{d}(A)$ the family of elements of $\Cc_{d}$ which are determined by $A$. We write
\begin{equation*}
\Oc_{d,n}(A):=\{C\in \Dc_{d}(A):\;|C\cap A|\leq n\},
\end{equation*}
and $\Oc_d(A):=\Oc_{d,\binom{d+2}{2}-1}(A)$ . Since there is always a curve of degree $d$ passing through $\binom{d+2}{2}-1$ given points, notice that any $C\in\Dc_d(A)$ satisfies that $|C\cap A|\geq \binom{d+2}{2}-1$, and therefore $\Oc_{d,n}(A)=\emptyset$ for all $n<\binom{d+2}{2}-1$.

One of the best known results in discrete geometry is Sylvester-Gallai theorem and it can be stated as follows. 
\begin{thm}
\label{R1}
Let $A\in\Pc\left(\Real^2\right)$ be finite. If $A$  is not contained in a line, then $\Oc_1(A)\neq\emptyset$. 
\end{thm}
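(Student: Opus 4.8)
The plan is to first translate the statement into the classical language of ordinary lines, and then to run Kelly's minimal-distance argument. A curve of degree $1$ is precisely a line in $\Real^2$, so I would begin by unwinding $\Dc_1(A)$. Given a line $C_1$, if $|C_1\cap A|\geq 2$ then $C_1$ is the unique line through any two of its points of $A$, whence any $C_2\in\Cc_1$ with $C_2\cap A\supseteq C_1\cap A$ must equal $C_1$, so $C_1\in\Dc_1(A)$; conversely, if $|C_1\cap A|\leq 1$ one can rotate $C_1$ about its single point of $A$ (or take any other line, if $C_1\cap A=\emptyset$) to produce a distinct $C_2\in\Cc_1$ with $C_2\cap A\supseteq C_1\cap A$, so $C_1\notin\Dc_1(A)$. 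Hence $\Dc_1(A)$ is exactly the set of \emph{connecting lines}, those meeting $A$ in at least two points. Since $\binom{1+2}{2}-1=2$, we have $\Oc_1(A)=\Oc_{1,2}(A)=\{C\in\Dc_1(A):\;|C\cap A|\leq 2\}$, the set of \emph{ordinary lines} (meeting $A$ in exactly two points). Thus the theorem is equivalent to the assertion that a finite non-collinear $A$ admits an ordinary line.

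Next I would set up the extremal configuration. Consider the set $\Sc$ of pairs $(P,\ell)$ with $P\in A$, $\ell$ a connecting line, and $P\notin\ell$. Because $A$ is not contained in a line, $\Sc\neq\emptyset$: any two points of $A$ span a connecting line, and some third point of $A$ lies off it. As $A$ is finite, $\Sc$ is finite and nonempty, so I may choose $(P,\ell)\in\Sc$ minimizing the Euclidean distance $\mathrm{dist}(P,\ell)>0$.

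Then I would argue that this extremal $\ell$ is ordinary by contradiction: suppose $|\ell\cap A|\geq 3$. Let $F$ be the foot of the perpendicular from $P$ to $\ell$, so $\mathrm{dist}(P,\ell)=|PF|$. The point $F$ splits $\ell$ into two closed rays, so by pigeonhole at least two points of $\ell\cap A$, say $Q_1,Q_2$, lie on the same ray; label them with $|FQ_1|\leq|FQ_2|$, so that $Q_1$ lies on the segment $FQ_2$ and $|Q_1Q_2|\leq|FQ_2|$. Put $\ell':=\overline{PQ_2}$, a connecting line with $Q_1\notin\ell'$ (otherwise $\ell$ and $\ell'$ would share the two distinct points $Q_1,Q_2$, hence coincide, contradicting $P\in\ell'\setminus\ell$), so that $(Q_1,\ell')\in\Sc$. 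Comparing the two expressions for twice the area of the triangle $PQ_1Q_2$, one with base $Q_1Q_2\subseteq\ell$ and height $|PF|$, the other with base $PQ_2\subseteq\ell'$ and height $\mathrm{dist}(Q_1,\ell')$, would yield
\begin{equation*}
\mathrm{dist}(Q_1,\ell')=\frac{|Q_1Q_2|\cdot|PF|}{|PQ_2|}.
\end{equation*}
Since $|Q_1Q_2|\leq|FQ_2|<|PQ_2|$, where the strict inequality holds because $PQ_2$ is the hypotenuse of the right triangle $PFQ_2$ with $|PF|>0$, I would conclude $\mathrm{dist}(Q_1,\ell')<|PF|=\mathrm{dist}(P,\ell)$, contradicting minimality. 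Hence $|\ell\cap A|=2$ and $\Oc_1(A)\neq\emptyset$.

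The main obstacle is less the final inequality, which the area identity reduces to bookkeeping, than securing the geometric setup cleanly: namely verifying that the pigeonhole step genuinely yields two points of $\ell\cap A$ on one closed ray from $F$ (handling uniformly the degenerate possibilities $Q_1=F$ and $F\notin A$), and confirming that $(Q_1,\ell')$ is a bona fide element of $\Sc$ strictly improving the minimum. Once these are in place the contradiction is immediate, and the reduction in the first paragraph shows that this is exactly the claimed nonemptiness of $\Oc_1(A)$.
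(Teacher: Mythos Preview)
Your proposal is correct: the reduction of $\Oc_1(A)$ to the set of ordinary lines is accurate, and the argument you give is Kelly's classical minimal-distance proof, carried out carefully (including the degenerate case $Q_1=F$ and the verification that $(Q_1,\ell')\in\Sc$). The paper does not supply its own proof of this statement; it simply cites the survey \cite{BM}, where Kelly's argument---essentially the one you wrote---appears. So there is no substantive difference in approach to report: you have filled in what the paper outsources to the literature.
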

\begin{proof}
See \cite{BM}.
\end{proof}

 Sylvester-Gallai theorem has opened  a complete research field in discrete geometry, see for instance \cite{BM}, \cite{BMP}, \cite{GT}, \cite{PS}. An important Sylvester-Gallai problem is to bound $|\Oc_1(A)|$ in terms of $|A|$. A number of  quantitative results have been found, see \cite{CS}, \cite{GT}, \cite{KM}, \cite{Mo}. One of these results is the Dirac-Motzkin conjecture which was proven by B. Green and T. Tao. 
\begin{thm}
\label{R2}
There is an absolute constant $c_1>0$ with the following property.  Let $A\in\Pc\left(\Real^2\right)$ be finite with $|A|>c_1$. If $A$ is not contained in a line, then
\begin{equation*}
|\Oc_1(A)|\geq \frac{1}{2}|A|.
\end{equation*}
\end{thm}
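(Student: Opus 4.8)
The plan is to follow the strategy of Green and Tao, reducing the problem to a structural classification of point sets with few ordinary lines and then carrying out an exact count via the group law on cubic curves. First I would pass to the projective plane $\Pro^2(\Real)$, where the lines through exactly two points of $A$ (the members of $\Oc_1(A)$) are the \emph{ordinary lines} and the lines through three or more points are \emph{$3$-rich}. Dualizing $A$ to an arrangement of $|A|$ lines, ordinary lines become simple intersection points, and Melchior's inequality --- the projective analogue of Euler's formula applied to the dual arrangement --- already yields $|\Oc_1(A)|\geq 3$. This is of course far from the target $\tfrac12|A|$, so the bulk of the argument must extract rigidity from the hypothesis that ordinary lines are scarce.

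Set $n:=|A|$ and suppose for contradiction that $|\Oc_1(A)|<\tfrac12 n$. Since each ordinary line accounts for exactly one pair, all but at most $\tfrac12 n$ of the $\binom{n}{2}$ pairs of points lie on $3$-rich lines, so $A$ supports an unusually large number of collinear triples. The heart of the proof is a structure theorem: there is an absolute constant such that if $|\Oc_1(A)|\leq Kn$, then after deleting $O(K)$ points the remaining set lies on a cubic curve, meaning either an irreducible cubic, the union of an irreducible conic with a line, or the union of three lines. To prove this I would combine the polynomial method (a single cubic form vanishing on a positively-correlated portion of $A$, produced by counting incidences) with additive-combinatorial rigidity results governing sets that determine few directions or few collinear triples; the passage from ``many $3$-rich lines'' to ``contained in a cubic'' is exactly the point where one controls and discards the exceptional $O(K)$ points.

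With $A$ essentially lying on a cubic curve $C$, I would exploit the classical group law: for a suitable choice of origin, three smooth points $p,q,r\in C$ are collinear precisely when $p+q+r=0$ in the group. Consequently the line through distinct $p,q\in A$ is ordinary unless its residual intersection $-(p+q)$ again lies in $A$ (with the tangency and flex cases treated separately). Counting ordinary lines thus becomes counting the pairs $\{p,q\}$ for which $-(p+q)\notin A$, a purely group-theoretic quantity. Analyzing this over the real points of $C$ --- whose smooth locus is isomorphic to $\Real/\Zet$, $\Real^\times$, or a product thereof in the degenerate cases --- forces any configuration with fewer than $\tfrac12 n$ ordinary lines to be a coset of a finite subgroup (up to the bounded exceptional set), and a direct computation on such cosets returns the sharp bound $\tfrac12 n$. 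The degenerate cubics must be handled individually; the conic-plus-line case is where the extremal B\"or\"oczky configurations live, which simultaneously confirms that $\tfrac12 n$ cannot be improved.

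The main obstacle is unquestionably the structure theorem. Melchior's inequality and naive incidence bounds are nowhere near strong enough to force concentration on a cubic, and the delicate part is showing that scarcity of ordinary lines --- an essentially \emph{combinatorial} hypothesis --- propagates to genuine \emph{algebraic} structure while only sacrificing a bounded number of points. I expect the error analysis needed to reduce to an exact coset, together with the separate treatment of the degenerate cubics (including the parity phenomena that distinguish even and odd $n$ in the sharper form of the theorem), to require the most care, whereas the final group-law count, once the structure is in hand, is comparatively mechanical.
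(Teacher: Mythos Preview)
The paper does not prove this theorem: its entire proof is the single line ``See \cite[Thm.~1.2]{GT}.'' So there is nothing to compare against beyond the cited reference itself.

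Your sketch is a faithful high-level outline of the Green--Tao argument (Melchior's inequality in the dual, the structure theorem forcing near-containment in a cubic, and the exact count via the group law on the cubic and its degenerations), so you are aligned with what the paper actually relies on. Just be aware that in the context of this paper the result is treated as a black box; if your goal is only to reproduce the paper's proof of Theorem~\ref{R2}, a citation suffices, whereas if you intend to supply a self-contained argument you would need to fill in the substantial details you flag as obstacles, particularly the structure theorem.
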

\begin{proof}
See \cite[Thm. 1.2]{GT}.
\end{proof}
Another important  problem is to find Sylvester-Gallai qualitative type results  for different geometric objects instead of lines (e.g.  conics, circles, hyperplanes, etc.). For conics, this was done  by J. Wiseman and P. Wilson.
\begin{thm}
\label{R3}
Let $A\in\Pc\left(\Real^2\right)$ be finite. If $A$  is not contained in a curve of degree $2$, then $\Oc_2(A)\neq\emptyset$.
\end{thm}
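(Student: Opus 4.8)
The plan is to linearize the problem through the quadratic Veronese map and reduce Theorem \ref{R3} to the existence of an \emph{ordinary hyperplane} for the image point set. Define $v:\Real^2\to\Real^5$ by $v(x,y)=(x^2,xy,y^2,x,y)$, viewed inside $\Pro^5(\Real)$ through $[x^2:xy:y^2:x:y:1]$, so that $v(A)$ lies on the Veronese surface $\mathcal{V}\subseteq\Pro^5$. The key feature is that $(x,y)$ lies on the conic $ax^2+bxy+cy^2+dx+ey+f=0$ precisely when $v(x,y)$ lies on the hyperplane with coefficients $(a,b,c,d,e,f)$; hence incidences between $A$ and $\Cc_2$ correspond bijectively to incidences between $v(A)$ and the hyperplanes of $\Pro^5$. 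Under this dictionary the hypothesis that $A$ is not contained in a curve of degree $2$ becomes the statement that $v(A)$ spans $\Pro^5$, and an element of $\Dc_2(A)$ with at most five points of $A$ corresponds to a hyperplane $H$ for which $H\cap v(A)$ is affinely independent of size exactly $5$. Thus it suffices to produce a hyperplane spanned by exactly five points of $v(A)$ and containing no sixth, which shows $\Oc_2(A)\neq\emptyset$.

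The second step is to record the special position of $v(A)$, which is what makes the reduction viable. Since $\mathcal{V}$ is cut out by quadrics and contains no line, every line of $\Pro^5$ meets $\mathcal{V}$ in at most two points, so no three points of $v(A)$ are collinear; more generally $v(p_1),\dots,v(p_k)$ with $k\leq 5$ are linearly independent unless the $p_i$ fail to impose independent conditions on $\Cc_2$, which for so few points forces a degenerate alignment such as four of them lying on a common line. I would isolate from this exactly the genericity that the image set enjoys and that a general configuration need not. This is essential: the naive higher-dimensional Sylvester--Gallai statement, that any spanning finite subset of $\Pro^5(\Real)$ admits an ordinary hyperplane, is \emph{false} — two sets of many points placed on two skew lines give a spanning configuration in which every spanning hyperplane carries far more than the minimal number of points — so the entire content of the theorem lies in exploiting that $v(A)$ sits on a Veronese surface.

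With these structural facts in hand I would establish the ordinary-hyperplane statement by descending induction on the ambient dimension, projecting from a point $q=v(p_0)\in v(A)$ onto a $\Pro^4$. Because no three points of $v(A)$ are collinear, this projection is injective on $v(A)\setminus\{q\}$, and hyperplanes of $\Pro^5$ through $q$ correspond to hyperplanes of $\Pro^4$ with matching incidences, so an ordinary hyperplane of the projected set lifts to one through $q$ and exactly four further points of $v(A)$; iterating down to $\Pro^1$, where an ordinary hyperplane is a single point, would finish the argument. The main obstacle is precisely the inductive bookkeeping of special position: three image points become collinear after projecting from $q$ exactly when four of the original points are collinear in $\Real^2$, so the hypothesis ``no three collinear'' need not survive, and the configuration can degenerate toward the forbidden skew-lines type. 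I expect to handle this by splitting on the collinearities present in $A$, invoking the line Sylvester--Gallai theorem \ref{R1} to dispose of the heavily collinear regime while running the projection argument only when enough genericity persists; carrying both the extremal choice of $q$ and the final hyperplane entirely over $\Real$, and controlling the passage between real and complex zero sets of the quadratics, is where the care is required.
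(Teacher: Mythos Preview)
The paper does not prove Theorem~\ref{R3}; it simply cites Wiseman--Wilson \cite{WW}. So there is no in-paper argument to compare against, only the original source.

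Your strategy---pass to the Veronese surface in $\Pro^5$ and look for an ordinary hyperplane---is exactly the framework of \cite{WW}, and your structural observations are correct: no three points of $v(A)$ are collinear, and four of them span only a $2$-plane precisely when the underlying four points of $A$ are collinear in $\Real^2$. You are also right that the naive higher-dimensional Sylvester--Gallai statement is false, so the Veronese constraints are doing real work.

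The gap is the one you flag yourself and then defer. When you project from $q=v(p_0)$, every line $L\subseteq\Real^2$ through $p_0$ with $|A\cap L|\ge 4$ yields at least three collinear image points in $\Pro^4$, and the projected configuration no longer sits on a Veronese surface. After a second projection these collinear families can interact exactly as in the skew-lines obstruction you described, so the naive descent stalls. Saying you will ``split on the collinearities'' and ``invoke Theorem~\ref{R1} in the heavily collinear regime'' is not yet an argument: you must specify which $p_0$ to project from, prove that either the projection preserves enough independence to iterate or an ordinary conic can be extracted directly from the collinear structure of $A$, and show this dichotomy is exhaustive. In \cite{WW} (and in the later proofs \cite{BVZ}, \cite{CDFGLMSST}) this is precisely where the substance lies and it occupies several pages of casework; nothing in your sketch indicates how you would carry it out. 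As written, the proposal is an outline of the known approach rather than a proof.
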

\begin{proof}
See \cite[Thm.]{WW}.
\end{proof}
Different proofs of Theorem \ref{R3} were found later, see \cite{BVZ}, \cite{CDFGLMSST}. However, until this paper, no Sylvester-Gallai type results have been obtained for curves of higher degree. In particular,  the following conjecture of Wiseman and Wilson remains unproven. 
\begin{con}
\label{R4}
Let $d\in\Nat$ and  $A\in\Pc\left(\Real^2\right)$ be finite. If $A$ is not contained in a curve of degree $d$,  then $\Oc_d(A)\neq\emptyset$. 
\end{con}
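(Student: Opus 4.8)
The plan is to recast the problem through the degree-$d$ Veronese map and then to attack the resulting Sylvester--Gallai statement by induction on $d$. Set $N:=\binom{d+2}{2}$ and let $v_d\colon\Real^2\to\Real^{N}$ send $(x,y)$ to the vector of all monomials $x^iy^j$ with $i+j\le d$. A polynomial of degree $d$ is a linear form in these monomials, so the zero set of $\sum a_{ij}x^iy^j$ is the pullback under $v_d$ of a hyperplane of $\Pro^{N-1}$; thus elements of $\Cc_d$ correspond to hyperplanes and the points of $A$ lying on $C$ correspond to the points of $v_d(A)$ lying on the associated hyperplane $H_C$. Under this dictionary, ``$A$ is not contained in a curve of degree $d$'' becomes ``$v_d(A)$ spans $\Pro^{N-1}$,'' and $C\in\Dc_d(A)$ becomes ``$H_C\cap v_d(A)$ spans $H_C$.'' Since a hyperplane needs at least $N-1$ points to be spanned, a curve in $\Oc_d(A)=\Oc_{d,\binom{d+2}{2}-1}(A)$ corresponds exactly to an \emph{ordinary hyperplane}: one meeting $v_d(A)$ in exactly $N-1$ points in general position. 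So I would aim to prove that every spanning configuration of the form $v_d(A)$ admits an ordinary hyperplane. A caveat to handle at the outset is that over $\Real$ distinct polynomials can share a real zero set, so the correspondence must be read at the level of zero sets; one restricts to coefficient vectors whose real zero set genuinely determines them.

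I would proceed by induction on $d$, taking Theorem~\ref{R1} ($d=1$) and Theorem~\ref{R3} ($d=2$) as base cases. For the inductive step the goal is to descend from degree $d$ to degree $d-1$. One route is to fix a point $p\in A$ and study the pencil of degree-$d$ curves with a prescribed behaviour at $p$: dividing out a line or a lower-degree component through $p$ should relate spanning degree-$d$ curves to spanning degree-$(d-1)$ curves on the residual point set $A\setminus\{p\}$, so that an ordinary curve produced downstairs lifts to one upstairs. A cleaner, equivalent route in the Veronese picture is to project $v_d(A)$ from the linear span of $v_d$ of a carefully chosen subset; the image should again lie on a Veronese-type surface of lower degree, letting the inductive hypothesis supply an ordinary hyperplane whose preimage is ordinary for $v_d(A)$.

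As a fallback to the purely inductive scheme, I would run an extremal argument in the spirit of Kelly's proof of Sylvester--Gallai: among all pairs $(C,p)$ with $C$ spanned by at least $N-1$ points of $A$ and $p\in A\setminus C$, minimise a suitable defect (a measure of how far $C$ is from being ordinary, or a genuine distance after a generic affine normalisation), and show that a minimiser cannot contain a spare point, hence is ordinary. Here I would try to feed in the quantitative output of this paper's first main result, which already furnishes $\Omega_d\left(|A|^d\right)$ determined curves with at most $\tfrac{3d^2-3d+4}{2}$ points; the task is to push one such curve down to the exact bound $\binom{d+2}{2}-1$.

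The main obstacle is structural and, I expect, the entire content of the conjecture: in projective dimension at least $3$ the unrestricted ordinary-hyperplane statement is simply false, as there exist spanning point sets with no ordinary hyperplane. Any proof must therefore use the Veronese geometry in an essential way --- that $v_d(A)$ lies on a two-dimensional rational variety, that hyperplane sections pull back to plane curves of degree $d$, and that B\'ezout-type constraints tightly limit how two such curves can share points of $A$. Concretely, the hard part is to exclude the ``everywhere rich'' scenario in which every spanning curve meets $A$ in at least $\binom{d+2}{2}$ points; closing precisely this gap --- between the $\tfrac{3d^2-3d+4}{2}$ of the first main result and the target $\binom{d+2}{2}-1$ --- is where the difficulty of Conjecture~\ref{R4} concentrates, and it is the step I would expect to resist a direct induction.
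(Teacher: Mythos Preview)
The statement you are attempting to prove is labelled \texttt{con} in the paper --- it is Conjecture~\ref{R4}, the Wiseman--Wilson conjecture, and the paper explicitly says it ``remains unproven.'' There is no proof in the paper to compare against; the paper's contribution is Theorem~\ref{R5}, which gives curves with at most $\tfrac{3d^2-3d+4}{2}$ points, a bound strictly larger than $\binom{d+2}{2}-1$ for $d\ge 3$, together with Theorem~\ref{R7}, which reaches the sharp bound only under the extra hypothesis that every $n$-subset of $A$ lies on at most one degree-$d$ curve.

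Your proposal is not a proof but an outline of strategies, and to your credit you identify the genuine obstruction yourself: the ordinary-hyperplane statement fails for general spanning sets in $\Pro^{N-1}$ once $N\ge 4$, so the Veronese structure must be used essentially, and neither your inductive descent nor the Kelly-type extremal argument supplies that missing ingredient. Concretely, the step ``an ordinary curve produced downstairs lifts to one upstairs'' is exactly where the argument breaks: projecting from a subspan of $\psi_d(A)$ can collapse many points of $A$ to one image point (this is the phenomenon the paper controls via the quantity $\delta_d(A,B)$ in Section~4), so an ordinary line or hyperplane downstairs pulls back to a curve meeting $A$ in up to $2\delta_d(A,B)+|D_d(B)|$ points, which is the source of the $\tfrac{3d^2-3d+4}{2}$ rather than $\binom{d+2}{2}-1$. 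Your fallback of ``pushing one such curve down to the exact bound'' is precisely the open content of the conjecture, and nothing in the proposal indicates how to do it. In short: the gap you name at the end is real, it is the whole problem, and your sketch does not close it.
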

In the last few years,  a mix of the previous two Sivester-Gallai   type problems have been studied. This means that  given  a nonempty finite subset $A$ in $\Real^2$ and a family $\Fc$  of geometric objects (e.g. circles, conics, hyperplanes, etc.), we want (under reasonable assumptions) to determine or at least bound the number of elements $F$ of $\Fc$ such that $F$ is determined by $A$ and $|F\cap A|$ is small. This problem has been studied at least for lines, circles, conics and hyperplanes, see \cite{Ba}, \cite{BMo}, \cite{BVZ}, \cite{GT}, \cite{Hu2}, \cite{LMNSSZ}, \cite{LS19}, \cite{LS20}. However, there are no results for high  degree curves (as we said above, to our best knowledge, neither qualitative Silvester-Gallai  results exist for high  degree curves). In this paper we are interested in this problem when $\Fc$ is the family of curves with a given degree.  The first result of this paper is the next one.
\begin{thm}
\label{R5}
For each  $d\in\Nat$, there are $c_2=c_2(d),c_3=c_3(d)>0$ with the following property.  Let  $A\in\Pc\left(\Real^2\right)$ be finite with $|A|>c_2$. If $A$ is not contained in a curve of degree $d$,  then
\begin{equation*}
\left|\Oc_{d,\frac{3d^2-3d+4}{2}}(A)\right|\geq c_3|A|^{d}. 
\end{equation*}
\end{thm}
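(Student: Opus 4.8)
The plan is to pass to the Veronese picture and then induct on the degree $d$. Set $N:=\binom{d+2}{2}-1$ and let $\nu_d\colon\Real^2\to\Real^{N}$ be the (affine) Veronese map sending a point to the vector of all monomials of degree between $1$ and $d$. Under $\nu_d$ a curve of degree $d$ becomes the trace on $\nu_d(\Real^2)$ of an affine hyperplane, and for $C\in\Cc_d$ one has $\nu_d(C\cap A)=\nu_d(A)\cap H$ for the corresponding hyperplane $H$; the hypothesis that $A$ lies on no curve of degree $d$ says exactly that $\nu_d(A)$ affinely spans $\Real^N$. In this language $C\in\Dc_d(A)$ means that $\nu_d(C\cap A)$ affinely spans $H$, i.e. $H$ is the unique hyperplane through these points, so $|C\cap A|\ge N$ always. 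Writing the threshold as $\tfrac{3d^2-3d+4}{2}=N+(d-1)(d-2)$, the theorem asks for $\Omega_d(|A|^d)$ affine hyperplanes that are spanned by between $N$ and $N+(d-1)(d-2)$ points of $\nu_d(A)$. I record the two tools this reformulation makes transparent: determinedness is a statement about the rank of the conditions imposed by $C\cap A$ on $\Cc_d$, and, by the restriction sequence of a line $\ell$, the points of $A$ on $\ell$ impose at most $d+1$ conditions on $\Cc_d$, with equality forcing any such curve to contain $\ell$ (this is just B\'ezout: a curve of degree $d$ meeting a line in more than $d$ points contains it).

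The induction runs on $d$. For $d=1$ the statement is Theorem \ref{R2}: once $|A|>c_1$ and $A$ is not collinear there are at least $\tfrac12|A|=\Omega(|A|)$ ordinary lines, each determined with exactly $2=N$ points. Assume the result for degree $d-1$, so that there are $\Omega_{d-1}(|A|^{d-1})$ curves $C'\in\Oc_{d-1,t_{d-1}}(A)$ with $t_{d-1}=\tfrac{3(d-1)^2-3(d-1)+4}{2}$. Given such a $C'$, I build a curve of degree $d$ of the form $C=C'\cup\ell$ for a suitable line $\ell$. The budget increases by exactly $t_d-t_{d-1}=3(d-1)$, so I must choose $\ell$ so that it contributes at most $3(d-1)$ points of $A$ off $C'$. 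For determinedness I use the remarks above: if $\ell$ carries more than $d$ points of $A$, then every $D\in\Cc_d$ with $D\cap A\supseteq C\cap A$ contains $\ell$, hence $D=\ell\cup D''$ with $D''\in\Cc_{d-1}$, and $D''\supseteq(C'\cap A)\setminus\ell$; maintaining across the induction a controlled surplus of points on $C'$ guarantees that $(C'\cap A)\setminus\ell$ still determines $C'$, whence $D''=C'$ and $C$ is determined. The cleanest case is a line $\ell$ through exactly $d+1$ points of $A$, none on $C'$: then $C$ is determined and $|C\cap A|$ stays at the minimal value, and the surplus $(d-1)(d-2)$ is only spent when such rich lines are unavailable and $\ell$ must be taken poorer, with extra points absorbed into the budget.

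To reach the exponent $d$ I need $\Omega(|A|)$ admissible lines $\ell$ for each $C'$, giving $\Omega(|A|^{d-1})\cdot\Omega(|A|)=\Omega(|A|^d)$ pairs $(C',\ell)$, and then I must pass from pairs to distinct curves. A given $C\in\Cc_d$ arises as $C'\cup\ell$ only by splitting off one of its line components, and a curve of degree $d$ has at most $d$ such components and boundedly many ways to be written as (degree $d-1$)$\cup$(line); dividing by this $O_d(1)$ multiplicity converts the count of pairs into $\Omega_d(|A|^d)$ genuinely distinct determined curves with at most $t_d=\tfrac{3d^2-3d+4}{2}$ points, which is the assertion.

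The main obstacle is the supply of admissible lines together with the determinedness bookkeeping, and these interact. If $A$ has many rich lines one gets the clean construction, but in general position no three points are collinear and no such line exists; the way out is a dichotomy driven by a Beck/Sylvester--Gallai type input, using ordinary (two-point) lines in the spread case and paying for the weaker B\'ezout control with the extra budget $(d-1)(d-2)$ -- this is exactly why the threshold exceeds the minimal $N$. Making the conditions-count for the reducible curves $C'\cup\ell$ precise (so that the residual points provably determine $C'$ after deleting those on $\ell$) is the technical heart, and it must be combined with care about the real setting, since the zero set of a real polynomial of degree $d$ may fail to determine the polynomial; this has to be handled so that membership in $\Cc_d$ and the notion of being determined behave as the linear-algebraic picture suggests.
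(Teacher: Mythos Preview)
Your inductive scheme has a genuine gap at the determinedness step, and it is not a matter of bookkeeping but a structural obstruction. Your argument that $C=C'\cup\ell$ is determined hinges on B\'ezout forcing $\ell\subseteq D$, which needs $\ell$ to carry at least $d+1$ points of $A$. You correctly note that a generic set $A$ has no such rich lines and propose to fall back on ordinary (two-point) lines, ``paying for the weaker B\'ezout control with the extra budget $(d-1)(d-2)$''. But the budget $t_d-N=(d-1)(d-2)$ governs only the \emph{upper} bound on $|C\cap A|$; it does nothing for determinedness. Concretely: if $\ell=\{p,q\}$ is ordinary with $p,q\notin C'$, then $C\cap A=(C'\cap A)\cup\{p,q\}$ has at most $t_{d-1}+2$ points, and for $d\ge 2$ one checks $t_{d-1}+2<\binom{d+2}{2}-1$ can occur (indeed $|C'\cap A|$ may be as small as $\binom{d+1}{2}-1$, giving $|C\cap A|=\binom{d+1}{2}+1<\binom{d+2}{2}-1$). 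So $C$ is simply not determined: there is a whole pencil of degree-$d$ curves through $C\cap A$, and none of them need split off $\ell$. The phrase ``maintaining across the induction a controlled surplus of points on $C'$'' does not fix this, because your inductive hypothesis produces curves with $|C'\cap A|$ anywhere in $[\binom{d+1}{2}-1,\,t_{d-1}]$ and gives you no handle on the lower end. In short, for $A$ in general position your construction outputs \emph{no} determined degree-$d$ curves at all, and the $\Omega(|A|)$-many-lines count never gets off the ground.

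The paper avoids this entirely and does not induct on $d$. Instead it fixes a carefully chosen $(\binom{d+2}{2}-3)$-subset $B\subseteq A$ (a member of a family $\Nc_d(A)$ whose construction occupies Section~3), passes to the Veronese image, and \emph{projects} from the flat $\Fl(\psi_d(B))$ down to $\Pro^2$. Hyperplanes through $\psi_d(B)$ become lines in $\Pro^2$, and an ordinary-line result for the projected set (Lemma~\ref{R19}) yields $\Omega(|A|)$ hyperplanes spanned by $\psi_d(A)$ with few points for each such $B$; the size of $\Nc_d(A)$ supplies the remaining factor of $|A|^{\binom{d+2}{2}-4}$. The regular/non-regular dichotomy in the paper is about whether some low-degree curve contains a positive fraction of $A$, and it feeds into the construction of $\Nc_d(A)$---it is not the rich-line/ordinary-line dichotomy you sketch. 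If you want to pursue an approach closer to yours, the missing idea is a mechanism that produces determined degree-$d$ curves through a prescribed base set even when $A$ is generic; that is precisely what the projection-from-$B$ machinery provides.
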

For $d\in\{1,2\}$, note that $\frac{3d^2-3d+4}{2}=\binom{d+2}{2}-1$. Thus Theorem \ref{R5} can be seen as a generalization of  Theorem \ref{R2} (although, in the proof of Theorem \ref{R5}, we do not care about the best possible value of $c_3$), and Theorem \ref{R5} implies Theorem \ref{R3} for big enough sets. Moreover Theorem \ref{R5} is optimal as we see in the next result. 
\begin{thm}
\label{R6}
For each  $d,m\in\Zet$ such that   $m>\max\left\{\frac{3d^2-3d+4}{2}, \frac{d^2+4d}{2}\right\}$ and $d>1$, there is $A\in\Pc_m\left(\Real^2\right)$ such that
\begin{enumerate}
\item[i)]$A$ is not contained in a curve of degree $d$.
\item[ii)]$\left|\Oc_{d,\frac{3d^2-3d+4}{2}}(A)\right|\leq \binom{|A|-\binom{d+1}{2}}{d}$.
\end{enumerate}
\end{thm}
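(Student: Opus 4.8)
The plan is to realize the extremal configuration by placing almost all of the points on a single line. Fix a line $L\subseteq\Real^2$, set $p:=m-\binom{d+1}{2}$ and $q:=\binom{d+1}{2}$, and write $t:=\frac{3d^2-3d+4}{2}$. Let $A$ consist of $p$ points chosen on $L$ together with $q$ points chosen off $L$, both groups taken in general position, the only incidence among them being the forced collinearity of the $p$ points on $L$. The hypothesis $m>\frac{d^2+4d}{2}$ gives $p>\frac{3d}{2}\geq d+1$, which is exactly the slack the counting needs. To verify (i), observe that any $C\in\Cc_d$ with $A\subseteq C$ meets $L$ in the $p\geq d+1$ collinear points of $A$, so by B\'ezout's theorem $L\subseteq C$; hence $C=L\cup C'$ with $C'\in\Cc_{d-1}$, and $C'$ must pass through all $q=\binom{d+1}{2}$ points of $A$ off $L$. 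Since the space of curves of degree $d-1$ has projective dimension $\binom{d+1}{2}-1$, a generic set of $\binom{d+1}{2}$ points lies on no such curve; choosing the off-line points this way yields a contradiction, so (i) holds.

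Next I would prove (ii) by classifying every $C\in\Oc_{d,t}(A)$ according to whether $L\subseteq C$. If $L\not\subseteq C$, then $|C\cap L|\leq d$, so $C$ contains at most $d$ of the collinear points and at most all $q$ off-line points, whence $|C\cap A|\leq d+\binom{d+1}{2}=\binom{d+2}{2}-1\leq t$. Because any determined curve satisfies $|C\cap A|\geq\binom{d+2}{2}-1$, equality must hold: $C$ passes through exactly $d$ of the collinear points and through all $\binom{d+1}{2}$ off-line points. Conversely, for each $d$-subset $S$ of the $p$ collinear points, the $\binom{d+2}{2}-1$ points of $S$ together with the off-line points impose independent conditions on $\Cc_d$ — the only degeneracy to rule out is a set of $d$ collinear points, which is harmless for curves of degree $d$ — so they lie on a unique $C\in\Cc_d$; this $C$ meets $L$ in exactly the $d$ points of $S$ and contains no further point of $A$, so $C\in\Oc_{d,t}(A)$ with $|C\cap A|=\binom{d+2}{2}-1$. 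Since $S$ is recovered as $C\cap L\cap A$, distinct subsets give distinct curves, so these account for exactly $\binom{p}{d}=\binom{|A|-\binom{d+1}{2}}{d}$ elements of $\Oc_{d,t}(A)$.

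The main obstacle is the remaining case $L\subseteq C$, i.e. the reducible curves $C=L\cup C'$: I must show none of them lies in $\Oc_{d,t}(A)$, for otherwise the count in (ii) is exceeded. For such a $C$ to be determined, every other curve through $C\cap A$ again contains $L$ (its intersection with $L$ has $\geq d+1$ points), so $C'$ must be the unique curve of degree $d-1$ through the off-line points it meets; by the dimension count this forces $C'$ to pass through at least $\binom{d+1}{2}-1$ of them, giving $|C\cap A|\geq p+\binom{d+1}{2}-1=m-1$. The purpose of the hypotheses on $m$ is precisely to make $m-1>t$, so that all such curves carry more than $t$ points of $A$ and are excluded. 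For $d=2$ the threshold $\frac{d^2+4d}{2}$ already delivers $m-1>t$ directly; for $d\geq 3$ the borderline value $m=t+1$ is genuinely tight, and pushing these reducible curves strictly above the threshold there — by a careful, possibly non-generic, placement of the off-line points — is the delicate heart of the argument. Granting this, $\Oc_{d,t}(A)$ consists of exactly the $\binom{|A|-\binom{d+1}{2}}{d}$ curves found in the second paragraph, which is (ii).
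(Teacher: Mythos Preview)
Your construction is exactly the paper's: a set $B_0$ of $\binom{d+1}{2}$ points off a line $L$ with $B_0$ not contained in any curve of degree $\le d-1$, together with the remaining $p=m-\binom{d+1}{2}$ points on $L$. Your proof of (i) and your treatment of the case $L\not\subseteq C$ in (ii) match the paper's, except that the paper is more economical: it only uses that $C\mapsto C\cap L\cap A$ is a well-defined injection from $\Oc_{d,t}(A)$ into $\Pc_d(A\cap L)$, which already gives the upper bound; the bijection you set up is correct but unnecessary. The divergence is precisely at $L\subseteq C$. The paper dispatches this in one sentence, asserting that $|A\cap L|=m-\binom{d+1}{2}>t-\binom{d+1}{2}$ forces $L\not\subseteq C$ for every $C\in\Oc_{d,t}(A)$. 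Taken literally this inequality is just $m>t$, whereas excluding $L\subseteq C$ on cardinality grounds actually requires $|A\cap L|>t$, i.e.\ $m>t+\binom{d+1}{2}=2d^2-d+2$; so your hesitation here is well founded, and the paper's written argument does not obviously cover the range $t<m\le 2d^2-d+2$.

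Your proposed escape route, however, does not work. If $C=L\cup C'\in\Dc_d(A)$ then $C'\in\Cc_{d-1}$ must be determined by $C'\cap B_0$, so $|C'\cap B_0|\ge\binom{d+1}{2}-1$, and since $B_0$ lies on no curve of degree $\le d-1$ equality is forced; hence $|C\cap A|=p+\binom{d+1}{2}-1=m-1$ \emph{regardless} of how $B_0$ is placed. Worse, the hypothesis on $B_0$ means $\dim\psi_{d-1}(B_0)=\binom{d+1}{2}-1$, so every $(\binom{d+1}{2}-1)$-subset of $B_0$ spans a hyperplane in $\Real^{\binom{d+1}{2}-1}$ and therefore lies on a \emph{unique} curve of degree $\le d-1$; for $d=3$ this is always a genuine conic, and $L$ together with that conic is a determined cubic meeting $A$ in exactly $m-1$ points. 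Thus when $m=t+1$ these reducible curves sit in $\Oc_{d,t}(A)$ no matter how the off-line points are chosen, and ``non-generic placement'' cannot push them above the threshold. The gap you identify is real; the clean repair is to work under the stronger hypothesis $m>2d^2-d+2$, after which both your argument and the paper's go through verbatim.
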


 Unfortunately, for $d>2$, we have that $\frac{3d^2-3d+4}{2}\geq \binom{d+2}{2}$ so Theorem \ref{R5} does not provide information about $\Oc_{d,n}(A)$ when $\binom{d+2}{2}-1\leq n<\frac{3d^2-3d+4}{2}$. The next result of this paper deals with this problem when $A$ satisfies that each $B\in\Pc_n(A)$ is contained in at most one curve of degree $d$.
\begin{thm}
\label{R7}
For each  $d,n\in\Zet$ with $d\geq 3$ and  $n\geq \binom{d+2}{2}-1$, there are $c_4=c_4(n,d),c_5=c_5(n,d)>0$ with the following property.  Let   $A\in\Pc\left(\Real^2\right)$ be finite such that for each  $B\in\Pc_n(A)$, we  have that $B$ is contained in at most one curve of degree $d$. If $|A|>c_4$ and  $A$ is not contained in a curve of degree $d$, then 
\begin{equation*}
\left|\Oc_{d,2n+1-\binom{d+2}{2}}(A)\right|\geq c_5|A|^{\binom{d+2}{2}-3}. 
\end{equation*}
\end{thm}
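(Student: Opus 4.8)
The plan is to pass to the Veronese picture and reduce the whole statement to the existence of an ordinary line in the real projective plane. Set $M:=\binom{d+2}{2}$, let $\nu\colon\Real^2\to\Pro^{M-1}$ be the degree-$d$ Veronese map sending $(x,y)$ to the projective vector of all monomials of degree $\le d$, and put $P:=\nu(A)$. In this dictionary a curve of degree $d$ corresponds to a hyperplane of $\Pro^{M-1}$ not containing the fixed flat $Z\cong\Pro^{d}$ cut out by the degree-$<d$ coordinates; note $P\cap Z=\emptyset$ since every affine Veronese point has nonzero constant coordinate. I would record four translations: (i) $A$ lies on no curve of degree $d$ iff $P$ spans $\Pro^{M-1}$ (if $A$ lay on a curve of degree $e\le d$, multiplying its equation by a polynomial of degree $d-e$ would put $A$ on a curve of degree $d$, so the hypothesis is equivalent to: no nonzero polynomial of degree $\le d$ vanishes on $A$); (ii)--(iii) a hyperplane $H\not\supseteq Z$ whose trace $H\cap P$ spans $H$ corresponds to a curve $C\in\Dc_d(A)$ with $|C\cap A|=|H\cap P|$, and distinct such hyperplanes give distinct curves (a determined curve is recovered from its trace on $A$), so it suffices to count hyperplanes; (iv) the hypothesis that every $B\in\Pc_n(A)$ lies on at most one curve of degree $d$ becomes the general-position statement that every flat of dimension $\le M-3$ meets $P$ in at most $n-1$ points.

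The construction runs as follows. First I would pick an ordered tuple $(p_1,\dots,p_{M-3})$ of points of $P$ spanning a flat $G\cong\Pro^{M-4}$; building it greedily and using (iv) (every intermediate span has dimension $\le M-4$, hence at most $n-1$ points of $P$) gives at least $(|A|-n)^{M-3}=\Omega(|A|^{M-3})$ such tuples. Projecting $P\setminus G$ from $G$ yields a finite subset of the quotient $\Pro^2$ which spans $\Pro^2$ because $P$ spans $\Pro^{M-1}$; the projective form of Theorem \ref{R1} then furnishes an ordinary line $\ell$ meeting the projected set in exactly two points $\pi(p),\pi(q)$. Pulling $\ell$ back gives a hyperplane $H\supseteq G$ through the two fibers. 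Then $H\cap P$ contains $p_1,\dots,p_{M-3}$ (spanning $G$) and a point of each fiber, and as $q\notin\mathrm{span}(G,p)$ these span $\Pro^{M-2}=H$, so $H$ is determined. Every point of $H\cap P$ off $G$ projects into $\ell$ and hence lies in one of the two fibers $\mathrm{fib}_p,\mathrm{fib}_q$, so $|H\cap P|=|P\cap G|+|\mathrm{fib}_p|+|\mathrm{fib}_q|$; applying (iv) to the flats $\mathrm{span}(G,p),\mathrm{span}(G,q)\cong\Pro^{M-3}$ gives $|P\cap G|+|\mathrm{fib}_p|\le n-1$ and $|P\cap G|+|\mathrm{fib}_q|\le n-1$, and adding these and using $|P\cap G|\ge M-3$ yields $|H\cap P|\le 2n-2-|P\cap G|\le 2n+1-M$, exactly the required threshold.

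It remains to count and to discard spurious hyperplanes. Assigning to each good tuple one such $H$, I would note that any tuple mapping to a fixed $H$ lies inside $P\cap G\subseteq P\cap H$, a set of size $\le 2n+1-M$, so at most $(2n+1-M)^{M-3}=O_{n,d}(1)$ tuples share the same image; hence $\Omega(|A|^{M-3})$ distinct determined hyperplanes are produced. Finally, a hyperplane fails to give a genuine curve of degree $d$ exactly when it contains $Z\cong\Pro^{d}$, and a determined such hyperplane is the span of $Z$ with $M-2-d$ points of $P$, so there are at most $|A|^{M-2-d}$ of them; since $M-3-(M-2-d)=d-1\ge 1$ for $d\ge 2$, these are $o(|A|^{M-3})$. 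Subtracting them leaves $\Omega(|A|^{\binom{d+2}{2}-3})$ curves of degree $d$ in $\Oc_{d,\,2n+1-\binom{d+2}{2}}(A)$, as claimed.

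The content-bearing idea is that projecting from an $(M-4)$-flat collapses the high-dimensional configuration onto planar Sylvester--Gallai, while general position (iv) makes the cardinality bound fall out for free; the rest is bookkeeping. I expect the main obstacle to be the careful handling of ``degree exactly $d$'' versus ``degree $\le d$'': both in establishing translation (iv) (a flat carrying $\ge n$ points must force genuinely more than one degree-$d$ curve, which could fail only in the degenerate situation $Z\subseteq\mathrm{span}(\nu(B))$) and in the final count of spurious hyperplanes, together with checking that the greedy tuple count and the $O(1)$-to-one double count are uniform in $|A|$.
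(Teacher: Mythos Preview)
Your approach is correct and is genuinely simpler than the paper's. Both proofs push $A$ through the Veronese map, project from a $\binom{d+2}{2}-4$ flat spanned by points of $\psi_d(A)$, and find an ordinary line in the resulting planar configuration; the cardinality bound $2n+1-\binom{d+2}{2}$ arises in both by the same arithmetic (your two applications of (iv) to the $(M-3)$-flats $\mathrm{span}(G,p)$ and $\mathrm{span}(G,q)$ are exactly the paper's inequality $\delta_d(A,B)+|D_d(B)|<n$ in disguise). The difference is in how the projecting flat $G$ is produced and how many ordinary lines are used. The paper builds $G$ from a highly constrained family $\Nc_d(A)$ (Section~3), splits into the $d$-regular and irregular cases, and invokes the quantitative ordinary-line bound of Lemma~\ref{R19} to get $\Omega(|A|)$ curves per $B$. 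You instead observe that the hypothesis on $n$-subsets forces strong general position (your (iv)), so a greedy construction already yields $\Omega(|A|^{\binom{d+2}{2}-3})$ good tuples, and a single application of the qualitative Sylvester--Gallai theorem per tuple suffices; the $O_{n,d}(1)$-to-one overcount then gives the result directly, with no case split. What you gain is a much shorter argument using only Theorem~\ref{R1}; what the paper's machinery buys is a uniform framework that also proves Theorem~\ref{R5}, where no general-position hypothesis is available and your (iv) fails.

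Two small remarks on execution. First, your translation (iv) does require the argument you allude to: if some $(M-3)$-flat $F$ carried $n$ points $\nu(B)$, the pencil of hyperplanes through $F$ gives infinitely many polynomial classes vanishing on $B$; by Lemma~\ref{R11} only finitely many share a zero set, so one obtains at least two distinct curves in $\Cc_{\le d}$ through $B$, and completing any curve of degree $<d$ by generic lines yields two distinct elements of $\Cc_d$ through $B$, contradicting the hypothesis. Second, your discarding of hyperplanes containing $Z$ is harmless but in fact unnecessary: Lemma~\ref{R13} already asserts that every hyperplane spanned by $\psi_d(A)$ corresponds to a curve in $\Dc_d(A)\subseteq\Cc_d$, and your injectivity observation (a spanned hyperplane is recovered as $\Fl(\psi_d(C\cap A))$ from its curve $C$) shows the map is injective on all spanned hyperplanes, not just those missing $Z$.
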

The values of  $c_2,c_3,c_4$ and $c_5$ can be determined explicitly (although they depend on some values of some auxiliary results). As a consequence of Theorem \ref{R7},  if  any $B\in\Pc_{\binom{d+2}{2}-1}(A)$ is contained in at most one curve of degree $d$, then $\left|\Oc_{d}(A)\right|=\Omega\left(|A|^{\binom{d+2}{2}-3}\right)$ so Theorem \ref{R7} implies a particular case of Conjecture \ref{R4}. Also  Theorem \ref{R7} is not trivial as we see in the next theorem (although we do not know if the lower bound $\Omega\left(|A|^{\binom{d+2}{2}-3}\right)$ is the best one).
\begin{thm}
\label{R8}
For each  $d,n,m\in\Nat$ such that $n\geq \binom{d+2}{2}-1$ and  $m>2n+1-\binom{d+2}{2}$, there is $A\in\Pc_m\left(\Real^2\right)$ such that
\begin{enumerate}
\item[i)]$A$ is not contained in a curve of degree $d$.
\item[ii)]Each  $B\in\Pc_n(A)$ is contained in at most one curve of degree $d$. 
\item[iii)]$\left|\Oc_{d,2n+1-\binom{d+2}{2}}(A)\right|\leq \binom{|A|-1}{\binom{d+2}{2}-2}$.
\end{enumerate}
\end{thm}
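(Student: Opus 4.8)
The plan is to produce one explicit configuration and to read off all three properties through the degree-$d$ Veronese map $v_d\colon\Real^2\to\Real^N$, where $N:=\binom{d+2}{2}$. Under $v_d$ a curve of degree $d$ through a point $p$ becomes a hyperplane through $v_d(p)$, and $C\in\Dc_d(A)$ translates into the condition that $v_d(C\cap A)$ spans a subspace of dimension $N-1$. Concretely I would take the irreducible curve $\Gamma_0=\{(x,y):y=x^d\}$, choose $m-1$ points $A'=\{(t_i,t_i^d):1\le i\le m-1\}$ with the $t_i$ distinct and positive, pick one further point $p_0\notin\Gamma_0$, and set $A:=A'\cup\{p_0\}$. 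The algebraic fact driving everything is that $y-x^d$ is, up to scalar, the only polynomial of degree $\le d$ vanishing on $\Gamma_0$; hence $v_d(\Gamma_0)$ lies in a unique hyperplane $H_0$ of dimension $N-1$, while $v_d(p_0)\notin H_0$.

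For i) and ii) the point is that $v_d(A')\subseteq H_0$ spans $H_0$, which is where positivity of the $t_i$ enters. In coordinates each monomial $x^ay^b$ with $a+b\le d$ evaluates on $\Gamma_0$ to $t^{a+db}$, so $v_d(A')$ is a generalized Vandermonde system in the $N-1$ distinct exponents that occur; since $\{t^{e}\}$ is a Chebyshev system on $(0,\infty)$, every such matrix with distinct positive nodes is nonsingular, so any $N-1$ points of $v_d(A')$ are independent and span $H_0$. This gives i) at once, because $v_d(A')$ spans $H_0$ and $v_d(p_0)\notin H_0$ force $v_d(A)$ to span $\Real^N$. It also gives ii): any $B\in\Pc_n(A)$ with $n\ge N-1$ either contains $N-1$ points of $A'$ (which already span $H_0$) or contains $p_0$ together with $\ge N-2$ points of $A'$ (which span $H_0$ after adjoining $v_d(p_0)$), so $v_d(B)$ spans dimension $\ge N-1$ and $B$ lies on at most one curve of degree $d$.

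For iii) I would use a clean dichotomy for each $C\in\Oc_{d,2n+1-N}(A)$. If $p_0\notin C$ then $C\cap A\subseteq A'\subseteq\Gamma_0$, so $\Gamma_0$ is a curve of degree $d$ with $\Gamma_0\cap A\supseteq C\cap A$; determinacy of $C$ forces $C=\Gamma_0$, and as $|\Gamma_0\cap A|=m-1>2n+1-N$ (in the range $m>2n+2-N$) this excludes $\Gamma_0$ from $\Oc_{d,2n+1-N}(A)$. Hence every $C\in\Oc_{d,2n+1-N}(A)$ passes through $p_0$. For such a $C$ the span count pins down $|C\cap A'|=N-2$ exactly: it is $\ge N-2$ since $v_d(C\cap A)$ must span $N-1$ while $v_d(p_0)$ contributes only one dimension outside $H_0$, and it cannot reach $N-1$ because $N-1$ points of $A'$ span $H_0$ and then together with $v_d(p_0)$ would span all of $\Real^N$, which no curve can contain. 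Thus $C\mapsto C\cap A'$ is a well-defined injection of $\Oc_{d,2n+1-N}(A)$ into $\Pc_{N-2}(A')$, injective because $\{p_0\}\cup T$ with $T\in\Pc_{N-2}(A')$ spans dimension $N-1$ and so lies on a unique curve of degree $d$. This yields
\[
\left|\Oc_{d,2n+1-\binom{d+2}{2}}(A)\right|\le\binom{m-1}{N-2}=\binom{|A|-1}{\binom{d+2}{2}-2}.
\]

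The step I expect to be the main obstacle is the general-position input of the second paragraph: converting ``$\{t^{e}\}$ is a Chebyshev system'' into the precise nonvanishing of the relevant generalized Vandermonde determinants, and verifying that the single coincidence of exponents (both $x^d$ and $y$ giving $t^d$) is exactly the relation cutting out $H_0$ and produces no further degeneracy. I would also check, via the fact that a curve of degree $<d$ through an independent $(N-1)$-set would force the vanishing space to have dimension $\ge 3$, that the unique curve through each spanning set has degree exactly $d$, so no curves are lost in passing between degree $\le d$ and degree exactly $d$. The remaining delicate point is bookkeeping at the extreme value $m=2n+2-N$, where $\Gamma_0$ itself meets $A$ in exactly $2n+1-N$ points and must be accounted for separately; this case should be treated on its own (or by a small perturbation of the construction) rather than folded into the main injection.
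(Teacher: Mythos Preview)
Your construction and argument coincide with the paper's: place $m-1$ points on an irreducible degree-$d$ curve, one point off it, arrange the on-curve points so that any $k\le\binom{d+2}{2}-1$ of their Veronese images are affinely independent, and deduce i)--iii) from this genericity. The difference lies only in how the general-position property is secured. The paper takes an arbitrary irreducible $C_0\in\Cc_d$ and builds the on-curve set recursively, at each step avoiding the finitely many points of $C_0$ that fall into a proper flat already spanned (finiteness via B\'ezout). You instead fix the explicit curve $\Gamma_0=\{y=x^d\}$ and verify genericity directly: the Veronese image of $(t,t^d)$ has coordinates $t^{a+db}$, the unique exponent collision is $(d,0)\leftrightarrow(0,1)$ (which cuts out $H_0$ and nothing else), and the remaining generalized Vandermonde minors are nonzero by the Chebyshev-system property of $\{t^e\}$ on $(0,\infty)$. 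Your route produces a concrete computable example at the cost of invoking that analytic fact; the paper's route needs only B\'ezout and applies to any irreducible degree-$d$ curve. Two remarks: your worry about degree exactly $d$ versus $\le d$ in iii) is unnecessary, since injectivity of $C\mapsto C\cap A'$ follows directly from determinacy ($C_1\cap A=C_2\cap A$ with $C_1,C_2\in\Dc_d(A)$ forces $C_1=C_2$); and you are right to flag the boundary value $m=2n+2-\binom{d+2}{2}$, where $\Gamma_0$ itself lies in $\Oc_{d,2n+1-\binom{d+2}{2}}(A)$ --- the paper's proof has the same gap there (its inequality $m-1>2n+1-\binom{d+2}{2}-1$ does not exclude $C=C_0$ at that value).
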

 
   We explain the main ideas in the proofs of Theorem \ref{R5} and Theorem \ref{R7}.
 \begin{enumerate}
 \item[i)]Let $d\in\Nat$, $A$ be a nonempty finite subset of $\Real^2$ and $\psi_d$ be the $d$-Veronese map. For each hyperplane $H$ in $\Real^{\binom{d+2}{2}-1}$, we have that $\psi_d^{-1}(H)\in \Cc_{e}$ for some $e\in[1,d]$; conversely, for each $C\in\bigcup_{e=1}^{d}\Cc_e$, there is a hyperplane  $H$ in $\Real^{\binom{d+2}{2}-1}$ such that $\psi_d^{-1}(H)=C$. Thus the problem of finding curves determined by $A$ which contain few points of $A$  is (almost) equivalent to the problem of finding hyperplanes generated  by $\psi_d(A)$ which contain few points of $\psi_d(A)$.
 
 \item[ii)]It is easier to deal with hyperplanes than to do it with curves. As we mentioned above, there are already results that assure the existence of several hyperplanes in $\Real^e$ which are generated but contain few points of a given set $S$, see \cite{Ba}, \cite{BMo}, \cite{LS19}, \cite{LS20}. Nonetheless, these results  require that any $e$ points of $S$ generate a hyperplane. In general, the set $\psi_d(A)$ wont satisfy this condition in $\Real^{\binom{d+2}{2}-1}$. Thus we cannot take advantage of \cite{Ba}, \cite{BMo}, \cite{LS19}, \cite{LS20}.   What we will do is to fix a $\binom{d+2}{2}-4$-dimensional flat $F$ and a $2$-dimensional flat $G$  such that $F\cap G=\emptyset$. Considering $\Real^{\binom{d+2}{2}-1}$ embedded into $\Pro^{\binom{d+2}{2}-1}$ and the  homogenization $G^h\cong \Pro^2$ of $G$ in $\Pro^{\binom{d+2}{2}-1}$ , we will project each $\binom{d+2}{2}-3$-dimensional flat $K$ containing $F$ into $K\cap G^h$ (which is always a single point in the projective space $G^h\cong\Pro^2$); this will induce a map $\pi_F:\Real^{\binom{d+2}{2}-1}\setminus F\rightarrow \Pro^2$. In this way we take the problem of  hyperplanes containing $F$ into a problem of lines. 
 
 \item[iii)]Nevertheless,  we will need very special flats $F$. To be able to say that  if a line $L$ contains few points of $(\pi_{F}\circ\psi_d)(A)$, then the curve $(\pi_{F}\circ\psi_d)^{-1}(L)$ contains few points of $A$, we need that $F$ satisfies certain conditions.   The flats $F$ will be the affine hull of $\psi_d(B)$ where  $B$ is a subset of $A$ which satisfies some technical conditions. The family of subsets $B$ of $A$ satisfying those assumptions will be denoted by $\Nc_d(A)$. The longest and most tedious part of this paper (which is Section 3) is to warranty that  $\Nc_d(A)$ is not very small, however this is the core of this article. The proof of this fact will depend on the structure of $A$; specifically, it depends on whether there is a low degree curve that contains several points of $A$ or not. 
 
 \item[iv)]Let $F$ be the affine hull of $\psi_d(B)$ for some   $B\in \Nc_d(A)$ and denote by  $\varphi $ the restriction of $\pi_{F}\circ\psi_d$ to $\Real^2\setminus\psi_d^{-1}(F)$. In Section 4 we will prove that there are $k\leq d^2$ and  a finite collection (bounded in terms of $d$) of  Zariski closed subsets  $\{E_i\}_{i\in I}$ of $\Real^2$ such that 
 \begin{enumerate}
 \item[$\bullet$] $\varphi(E_i)$ is a singleton for each $i\in I$.
  \item[$\bullet$] For each $\ab\in \Real^2\setminus(\psi_d^{-1}(F)\cup \bigcup_{i\in I}E_i)$, we have that $\varphi(\ab)\not\in\{\varphi(E_i):i\in I\}$.
  \item[$\bullet$] For each $\ab\in \Real^2\setminus(\psi_d^{-1}(F)\cup \bigcup_{i\in I}E_i)$, we have that $|\varphi^{-1}(\varphi(\ab))|\leq k$.
\end{enumerate} 

\item[v)]With the map $\varphi$ as in iv),  any   line $L$ in $\Real^2$ disjoint from $\{\varphi(E_i):i\in I\}$ which contains exactly two points of $\varphi\left(A\setminus \psi_d^{-1}(F)\right)$  satisfies that $\varphi^{-1}(L)$ is an element of $\Cc_{d}$ which contains few points of $A$ and is determined by it. A result of T. Boys , C. Valculescu and  F. de Zeeuw (see Lemma \ref{R19}) will warranty that there are several lines $L$ as above, and therefore we will have a number of the desired curves for each $B\in\Nc_d(A)$. This can be done for each flat $F$ generated by  an element of $\Nc_d(A)$. Thus, since $|\Nc_d(A)|\gg 0$, we will  have several curves satisfying  the desired conditions. 
 \end{enumerate}
 
We think that at least as valuable as the main results of this paper is the method  we use since it allows to translate many Sylvester-Gallai problems of curves into Sylvester-Gallai problems of lines where much more tools are available.

This paper is organized as follows. In Section 2 we establish some notation and auxiliary results that will be needed in the forthcoming sections. As we already mentioned,  the families $\Nc_d(A)$  are fundamental tools in the proofs of the main results. We introduce and state some properties of the families $\Nc_d(A)$ in Section 3. Using the elements of the families $\Nc_d(A)$, we will take the problem of finding curves with few points into finding ordinary lines which avoid a finite set and this will be done in Section 4. The proofs of the main results of this paper are completed in Section 5.  

\section{Preliminaries}
 In this section we state some notation  and  results that will be needed later. 
 
 Let $p(x,y)\in\Real[x,y]$ and $d\in\Nat$. We denote by $\Zc(p(x,y))$ its zero set in $\Real^2$ and by $\deg(p(x,y))$ its degree.   We say that $p(x,y)\in\Real[x,y]$ is \emph{irreducible} if $\deg(p(x,y))>0$ and for any factorization $p(x,y)=p_1(x,y)p_2(x,y)$, we get that $p_i(x,y)\in\Real$ for some $i\in\{1,2\}$. We say that $\Zc(p(x,y))$ is \emph{irreducible} if $p(x,y)$ is irreducible.  Write 
\begin{align*}
\Cc_{\leq d}&:=\bigcup_{e=1}^d\Cc_e\\
\Real_d[x,y]&:=\{p(x,y)\in\Real[x,y]:\;\deg(p(x,y))\in [1,d]\};
\end{align*}
also, for technical reasons, we write $\Cc_{\leq 0}:=\emptyset$.  In $\Real[x,y]$, we define the relation   $p(x,y)\sim q(x,y)$ if there is $r\in\Real\setminus\{0\}$ such that $p(x,y)=r\cdot  q(x,y)$, and we denote by $[p(x,y)]$ the class of $p(x,y)$ and by $\Real[x,y]/\sim$ the set of classes. For any subset $X$ of $\Real[x,y]$, we write
 \begin{equation*}
 X/\sim:=\{[p(x,y)]\in\Real[x,y]/\sim:\;[p(x,y)]\cap X\neq\emptyset\}.
 \end{equation*}
  Note that for any $p(x,y),q(x,y)\in\Real[x,y]$ such that $[p(x,y)]=[q(x,y)]$, we have that $\deg(p(x,y))=\deg(q(x,y))$ and $\Zc(p(x,y))=\Zc(q(x,y))$; thus    
   \begin{equation*}
 \sigma_{d}:\Real_d[x,y]/\sim\longrightarrow \Cc_{\leq d},\qquad \sigma_{d}([p(x,y)])=\Zc(p(x,y)).
 \end{equation*} 
  is well defined. Let $p(x,y)\in\Real[x,y]$ be such that $\deg(p(x,y))>0$ and consider a factorization $p(x,y)=r\prod_{i=1}^np_i(x,y)^{m_i}$ with $m_1,m_2,\ldots, m_n\in\Nat$, $r\in\Real$, $[p_i(x,y)]\neq [p_j(x,y)]$ for all $i,j\in[1,n]$ such that $i\neq j$, and $p_i(x,y)$ is irreducible for each $i\in[1,n]$. Then the irreducible  curves $\Zc(p_1(x,y)), \Zc(p_2(x,y)),\ldots,\Zc(p_n(x,y))$ are known as the \emph{irreducible components of $\Zc(p(x,y))$}. The irreducible components satisfy that
\begin{equation*}
 \Zc(p(x,y))=\bigcup_{i=1}^n\Zc(p_i(x,y))=\Zc\left(\prod_{i=1}^n p_i(x,y)\right)
\end{equation*} 
 and 
\begin{equation*}
\deg(p(x,y))=\sum_{i=1}^nm_i\deg (p_i(x,y))\geq \sum_{i=1}^n\deg(p_i(x,y)).
\end{equation*}
 We will use a weak version of Bezout's theorem.
\begin{thm}
 \label{R9}
 Let $d,e\in\Nat$, $C_1\in\Cc_{\leq d}$  and $C_2\in\Cc_{\leq e}$. If $C_1$ and $C_2$ do not have an irreducible component  in common, then  
 \begin{equation*}
 |C_1\cap C_2|\leq de.
 \end{equation*}
 \end{thm}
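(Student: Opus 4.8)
The plan is to prove this by elimination theory, reducing the geometric bound to an estimate on the degree of a resultant polynomial. First I would reduce to coprime defining polynomials. Write $C_1=\Zc(p)$ and $C_2=\Zc(q)$ and replace $p$ by the product of its distinct irreducible factors and $q$ likewise; this changes neither $C_1$ nor $C_2$ and can only decrease the degrees, so I may assume $\deg p\leq d$ and $\deg q\leq e$ with both squarefree. If an irreducible factor of $p$ were associate to one of $q$, then $C_1$ and $C_2$ would share an irreducible component, contrary to the hypothesis; hence $p$ and $q$ have no common irreducible factor and are coprime in $\Real[x,y]$. By Gauss's lemma they stay coprime in $\Real(x)[y]$, so there exist $u,v\in\Real[x,y]$ and $g\in\Real[x]\setminus\{0\}$ with $up+vq=g(x)$. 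Thus every point of $C_1\cap C_2$ has $x$-coordinate among the finitely many roots of $g$, and symmetrically only finitely many $y$-coordinates occur, so $C_1\cap C_2$ is finite.

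Next I would put the curves in general position by a linear shear. Let $p_m,q_n$ be the top-degree forms of $p,q$, where $m=\deg p\leq d$ and $n=\deg q\leq e$. Applying $(x,y)\mapsto(x+\lambda y,y)$ with $\lambda\in\Real$ turns the coefficient of $y^m$ in $p$ into the constant $p_m(\lambda,1)$ and that of $y^n$ in $q$ into $q_n(\lambda,1)$; since $\Real$ is infinite and these are nonzero univariate polynomials in $\lambda$, I may choose $\lambda$ so that both are nonzero and, avoiding finitely many further values, so that the finitely many points of $C_1\cap C_2$ acquire pairwise distinct $x$-coordinates. This substitution is an invertible affine map, hence a bijection of $\Real^2$ preserving total degrees and $|C_1\cap C_2|$. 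Afterwards $p$ and $q$ have degrees exactly $m$ and $n$ in $y$ with nonzero constant leading coefficients, so the resultant $R(x):=\mathrm{Res}_y(p,q)\in\Real[x]$ specializes cleanly: for every $(a,b)\in C_1\cap C_2$ the univariate polynomials $p(a,\cdot)$ and $q(a,\cdot)$ share the root $b$, whence $R(a)=0$; and $R\neq 0$ because $p,q$ are coprime in $\Real(x)[y]$.

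The crux, and the step I expect to be the main obstacle, is the degree estimate $\deg R\leq mn\leq de$. Here I would homogenize: let $P,Q\in\Real[x,y,z]$ be the degree-$m$ and degree-$n$ homogenizations of $p,q$. Because the leading $y$-coefficients are constant, $P$ and $Q$ retain $y$-degrees $m$ and $n$, and the coefficient of $y^i$ in $P$ (respectively $y^j$ in $Q$) is a form of degree $m-i$ (respectively $n-j$) in $(x,z)$. A weighting argument on the Sylvester matrix then shows that $\mathrm{Res}_y(P,Q)$ is homogeneous of degree $mn$ in $(x,z)$, and setting $z=1$ recovers $R(x)$, so $\deg R\leq mn$. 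Combining the steps, since distinct points of $C_1\cap C_2$ have distinct $x$-coordinates, each a root of the nonzero polynomial $R$, I obtain
\begin{equation*}
|C_1\cap C_2|\leq\#\{\text{roots of }R\}\leq\deg R\leq mn\leq de,
\end{equation*}
as claimed.
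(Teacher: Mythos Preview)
Your proof is correct. The paper itself does not give a proof of this statement at all; it simply cites Hartshorne \cite[Ch.~I.7]{Ha}, where the result is obtained via intersection theory and Hilbert polynomials in projective space. Your argument instead follows the classical elementary route through resultants: reduce to coprime squarefree defining polynomials, apply a generic shear to force constant leading $y$-coefficients and separate the $x$-coordinates of the (finitely many) intersection points, and then bound the number of roots of $\mathrm{Res}_y(p,q)$ by its degree, which the homogenization-and-weight argument shows is at most $mn\leq de$. All of the reductions are sound, including the passage from ``no common irreducible component'' (in the paper's sense, as zero sets of irreducible factors) to ``no common irreducible factor''. Compared to the cited reference, your approach buys self-containment and avoids projective intersection multiplicities entirely, at the cost of the somewhat fiddly normalization via the shear and the resultant degree computation; the Hartshorne route gives the stronger statement with multiplicities but requires considerably more machinery.
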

 \begin{proof}
 See \cite[Ch. I.7]{Ha}.
 \end{proof}

The next  facts can be proven easily by the reader.
\begin{rem}
\label{R10}
Let $d\in\Nat$ and $e,f\in[1,d]$.
\begin{enumerate}
\item[i)]For any $C_1\in \Cc_{\leq e}$ and $C_2\in \Cc_{\leq f}$, notice that $C_1\cup C_2\in\Cc_{\leq e+f}$.
\item[ii)]For any $C_0\in \Cc_e$ and $C\in\Cc_{\leq d}$ such that $C_0\subseteq C$, there is $C_1\in\Cc_{\leq d-e}$ such that $C=C_0\cup C_1$.
\item[iii)]For any $A\in \Pc\left(\Real^2\right)$ such that $|A|\leq \binom{d+2}{2}-1$, there is  $C\in\Cc_{\leq d}$ such that $A\subseteq C$.
\end{enumerate}
\end{rem}

For a curve $C$, it may happen that there exist $p(x,y),q(x,y)\in\Real[x,y]$ such that $[p(x,y)]\neq [q(x,y)]$ but $\Zc(p(x,y))=C=\Zc(q(x,y))$. The next lemma shows that for each $C\in\Cc_{\leq d}$ there are at most $d^d$ classes $[p(x,y)]\in\Real_d[x,y]/\sim$ such that $C=\Zc(p(x,y))$.
\begin{lem}
\label{R11}
Let  $d\in\Nat$ and $C\in\Cc_{\leq d}$ with pairwise distinct  irreducible components $\Zc(p_1(x,y)), \Zc(p_2(x,y)),\ldots,\Zc(p_n(x,y))$. Then
\begin{equation*}
\sigma^{-1}_{d}(C)=\left\{\left[\prod_{i=1}^np_i^{m_i}\right]\in \Real_d[x,y]/\sim:\;m_1,m_2,\ldots, m_n\in\Nat,\,\sum_{i=1}^nm_i\deg(p_i)\leq d\right\}.
\end{equation*}
Since the number of solutions $(m_1,m_2,\ldots, m_n)\in\Nat^n$ of $\sum_{i=1}^nm_i\deg(p_i)\leq d$ is bounded by $d^d$, we get in particular that
\begin{equation*}
|\sigma^{-1}_{d}(C)|\leq d^d.
\end{equation*}
\end{lem}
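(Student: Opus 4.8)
The plan is to prove the displayed description of $\sigma_d^{-1}(C)$ by a double inclusion and then read off the bound $d^d$ from it. The easy inclusion is ``$\supseteq$'': given $(m_1,\dots,m_n)\in\Nat^n$ with $\sum_{i=1}^n m_i\deg(p_i)\le d$, the product $\prod_{i=1}^n p_i^{m_i}$ has degree $\sum_{i=1}^n m_i\deg(p_i)$, which lies in $[1,d]$, so it represents a class in $\Real_d[x,y]/\sim$; moreover $\Zc\big(\prod_{i=1}^n p_i^{m_i}\big)=\bigcup_{i=1}^n\Zc(p_i)=C$, so this class lies in $\sigma_d^{-1}(C)$. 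By unique factorization in $\Real[x,y]$ and the fact that the $p_i$ are pairwise non-associate irreducibles, distinct exponent vectors give distinct classes, so the right-hand set injects into $\sigma_d^{-1}(C)$.

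The counting bound is then immediate from this description. Since every $\deg(p_i)\ge1$ and every $m_i\ge1$, the inequality $\sum_{i=1}^n m_i\deg(p_i)\le d$ forces both $n\le d$ and $m_i\le d$ for each $i$; hence there are at most $d^n\le d^d$ admissible vectors $(m_1,\dots,m_n)$, and therefore $|\sigma_d^{-1}(C)|\le d^d$.

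The substantive direction is ``$\subseteq$''. I would take $[q]\in\sigma_d^{-1}(C)$, factor $q=s\prod_j q_j^{k_j}$ into pairwise non-associate irreducibles, and match the geometric pieces of $\Zc(q)=C$ against the components $\Zc(p_i)$. The key tool is Theorem \ref{R9}: if $f$ is irreducible with $\Zc(f)$ infinite and $\Zc(f)\subseteq\Zc(g)$, then $f\mid g$, since otherwise $\Zc(f)$ and $\Zc(g)$ would share no common irreducible component and Theorem \ref{R9} would bound $|\Zc(f)\cap\Zc(g)|=|\Zc(f)|$ by $\deg(f)\deg(g)<\infty$, contradicting that $\Zc(f)$ is infinite. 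Applying this to each $\Zc(q_j)\subseteq C=\bigcup_i\Zc(p_i)$ and to each $\Zc(p_i)\subseteq C=\Zc(q)$ identifies the factor sets $\{[q_j]\}$ and $\{[p_i]\}$, so $q\sim\prod_i p_i^{m_i}$ with every $m_i\ge1$ and $\sum_i m_i\deg(p_i)=\deg(q)\le d$, placing $[q]$ in the desired set.

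The hard part is exactly the hypothesis ``$\Zc(f)$ infinite'' hidden in this matching, which is where the real (as opposed to complex) setting bites: an irreducible $f\in\Real[x,y]$ can have finite or empty real zero set (for instance $x^2+y^2$ or $x^2+y^2+1$), and such a factor can be appended to a defining polynomial of $C$ without altering $C\subseteq\Real^2$. To keep the inclusion ``$\subseteq$'' honest I would rely on the convention that the components $\Zc(p_i)$ are genuine (one-dimensional, hence infinite) curves and that the defining polynomials under consideration carry no extraneous lower-dimensional factors, or else pass to the complex zero locus, where the identification of irreducible factors via Theorem \ref{R9} is unconditional. Under either reading the two inclusions close up and yield the stated equality together with the bound $|\sigma_d^{-1}(C)|\le d^d$.
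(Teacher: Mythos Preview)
The paper does not prove this lemma; it simply cites \cite[Cor.~7]{Hu1}. So there is no in-paper argument to compare against, and your double-inclusion plus counting is exactly the standard route one would expect the cited reference to take.

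Your argument for the inclusion ``$\supseteq$'' and for the bound $d^d$ is correct and complete. Your argument for ``$\subseteq$'' is also the right idea, and you put your finger on the genuine obstruction: over $\Real$, an irreducible $f\in\Real[x,y]$ can have empty (or finite) zero set, and then multiplying a defining polynomial of $C$ by such an $f$ does not change $\Zc(\cdot)\subseteq\Real^2$. This is not a cosmetic issue with your proof---it is a defect of the statement as literally written. For instance, with $C=\Zc(x)$ and $d=3$, the class $[x(x^2+y^2+1)]$ lies in $\sigma_3^{-1}(C)$ but is not of the form $[x^{m}]$; worse, the classes $[x(x^2+y^2+c)]$ for $c>0$ are pairwise distinct, so $\sigma_3^{-1}(C)$ is infinite and the bound $d^d$ fails outright in this reading. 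Your proposed fixes (restrict to components that are honest one-dimensional curves, or pass to $\Com$) are exactly the conventions under which the equality and the bound become true, and presumably \cite{Hu1} makes one of them explicit. In short: your proof is correct modulo a hypothesis the lemma should have stated, and you correctly isolate where that hypothesis is needed.
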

\begin{proof}
See \cite[Cor.7]{Hu1}.
\end{proof}

Let $d\in\Natz$ and  $e\in [0,d]$.  A translation $F$  of a vectorial subspace $V$ of $\Real^d$ will be called  a \emph{flat}. We write  $\dim F:=\dim V$, and also if $V$ is an $e$-dimensional subspace,  we say that $F$ is an $e$-flat; in particular, $1$-flats are lines and $d-1$-flats are hyperplanes. The family of $e$-flats in $\Real^d$ will be denoted by $\Gc_{e}$. For any subset $S$ of $\Real^d$,  we denote by $\Fl(S)$ the smallest flat (with respect to $\subseteq$) which contains $S$ and we write $\dim S:=\dim \Fl(S)$. If $S=\emptyset$, we consider $\Fl(S)=\emptyset$ and $\dim S=-1$. If $S=\{\ssb_1,\ssb_2,\ldots, \ssb_n\}$, we write $\Fl(\ssb_1,\ssb_2,\ldots,\ssb_n):=\Fl(S)$. The family of $e$-flats $F$ in $\Real^d$ such that there is a subset $R$ of $S$ satisfying that $F=\Fl(R)$ will be denoted by $\Gc_{e}(S)$.

 A fundamental tool in this paper is the Veronese map.  Write $I_d:=\\\left\{(n,m)\in\Natz^2:\;n+m\in [1,d]\right\}$ so $|I_d|=\binom{d+2}{2}-1$. The \emph{$d$-Veronese map} is the map
 \begin{equation*}
 \psi_{d}:\Real^2\longrightarrow  \Real^{\binom{d+2}{2}-1},\qquad \psi_{d}(a_1,a_2)=\left(a_1^na_2^m\right)_{(n,m)\in I_d}
\end{equation*} 
To avoid confusion, the ring of polynomials which corresponds to $\Real^2$ will be denoted by $\Real[x,y]$ and the ring of polynomials which corresponds to $\Real^{\binom{d+2}{2}-1}$ will be denoted by $\Real[z_{(n,m)}]_{(n,m)\in I_d}$. There is a quite important relation between elements of $\Real_d[x,y]/\sim$ and hyperplanes in $\Real^{\binom{d+2}{2}-1}$ given by the next map 
\begin{align*}
\tau_{d}:\Real_d[x,y]/\sim\longrightarrow \Gc_{\binom{d+2}{2}-2},&\\
 \tau_{d}\left(\left[r_{(0,0)}+\sum_{(n,m)\in I_d}r_{(n,m)}x^ny^m\right]\right)=\Zc\left(r_{(0,0)}+\sum_{(n,m)\in I_d}r_{(n,m)}z_{(n,m)}\right)&
\end{align*}
 The Veronese map has some well-known properties that we will need later. The proof of the following facts can be found in standard algebraic geometry books, see for instance \cite[Ch. I]{Ha}, \cite[Ch. 1]{Sh}.
\begin{rem}
\label{R12}
Let $d\in\Nat$.  
\begin{enumerate}
\item[i)]The map $\psi_{d}$ is an isomorphism onto its image. 
\item[ii)]The map $\tau_d$ is a bijection. Note that  for any $[p(x,y)]\in \Real_d[x,y]/\sim$, we have that 
\begin{equation*}
\psi_{d}(\Zc(p(x,y)))=\tau_{d}([p(x,y)])\cap \psi_{d}(\Real^2).
\end{equation*}
\item[iii)]For all $e\in[1,d]$ and $A\in \Pc\left(\Real^2\right)$, we have that 
\begin{equation*}
\dim\psi_e(A)\leq \dim\psi_d(A).
\end{equation*}
\end{enumerate}
\end{rem} 
A crucial part of this paper is to take the problem of finding curves determined by $A$ containing few points of $A$ into the problem of finding hyperplanes in $\Real^{\binom{d+2}{2}-1}$ generated by $\psi_d(A)$ which contain  few points of $\psi_d(A)$. The main tool to do this is the following lemma. 
\begin{lem}
\label{R13}
Let $d\in \Nat$ and $A\in\Pc\left(\Real^2\right)$ be such that there is no  element of $\Cc_{\leq d}$ which contains $A$. Then 
\begin{equation*}
\tau_{d}\left(\sigma_{d}^{-1}(\Dc_{d}(A))\right)=\Gc_{\binom{d+2}{2}-2}(\psi_{d}(A)).
\end{equation*}
\end{lem}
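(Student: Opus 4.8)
The plan is to use the Veronese dictionary to turn both sides into the same statement about hyperplanes spanned by subsets of $P:=\psi_d(A)$ in $\Real^{N}$, where $N:=\binom{d+2}{2}-1$ (so $\Gc_{N-1}(P)=\Gc_{\binom{d+2}{2}-2}(\psi_d(A))$). First I would record the basic translation: by Remark \ref{R12}(ii) the map $\tau_d$ is a bijection onto the hyperplanes, and for any class $[q]$ with $C:=\Zc(q)$ one has $\psi_d(C)=\tau_d([q])\cap\psi_d(\Real^2)$; intersecting with $\psi_d(A)$ and using the injectivity of $\psi_d$ from Remark \ref{R12}(i) yields
\begin{equation*}
\tau_d([q])\cap P=\psi_d(C\cap A).
\end{equation*}
Thus a hyperplane $H$ contains $\psi_d(C\cap A)$ precisely when its class $\tau_d^{-1}(H)=[q']$ satisfies $C\cap A\subseteq\Zc(q')$. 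The hypothesis that no member of $\Cc_{\le d}$ contains $A$ says exactly that $P$ lies in no hyperplane, i.e. $\Fl(P)=\Real^{N}$, so that $C\cap A\subsetneq A$ for every $C\in\Cc_{\le d}$ and the right-hand family is the natural one.

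Next I would rewrite each side as a uniqueness statement about a fixed class $[q]$ with $C=\Zc(q)$ and $T:=C\cap A$. For a hyperplane $H=\tau_d([q])$ one has $H\in\Gc_{N-1}(P)$ iff $\Fl(H\cap P)=H$ iff $H$ is the unique hyperplane containing $H\cap P=\psi_d(T)$, which by the translation above reads
\begin{equation*}
(\ast)\qquad [q]\text{ is the unique class in }\Real_d[x,y]/\sim\text{ whose zero set contains }T.
\end{equation*}
On the other hand $H\in\tau_d(\sigma_d^{-1}(\Dc_d(A)))$ iff $C\in\Dc_d(A)$, that is
\begin{equation*}
(\dagger)\qquad C\in\Cc_d\text{ and no }C_2\in\Cc_d\text{ with }C_2\ne C\text{ has }T\subseteq C_2.
\end{equation*}
Since $\tau_d$ is a bijection, it then suffices to prove $(\ast)\Leftrightarrow(\dagger)$ for every class and take the union.

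For $(\ast)\Rightarrow(\dagger)$ I would argue that uniqueness forces $\deg q=d$: if $\deg q<d$, multiplying by any linear form $\ell$ gives a different class $[q\ell]\in\Real_d[x,y]/\sim$ with $T\subseteq\Zc(q)\subseteq\Zc(q\ell)$, violating $(\ast)$; hence $C\in\Cc_d$, and any competing $C_2\in\Cc_d$ with $T\subseteq C_2$ gives a degree-$d$ class through $T$, forced equal to $[q]$ by $(\ast)$, so $C_2=C$. For $(\dagger)\Rightarrow(\ast)$ the key preliminary is that a determined $C$ must be reduced of degree $d$, so that $\sigma_d^{-1}(C)$ is a singleton by Lemma \ref{R11}: if the product $p$ of the distinct irreducible components of $C$ had degree $e_0<d$, then using Remark \ref{R10} I could take a line $\ell\not\subseteq C$ and form $C_2:=C\cup\ell=\Zc(p\ell^{\,d-e_0})\in\Cc_d$ with $C_2\ne C$ and $T\subseteq C\subseteq C_2$, contradicting $(\dagger)$. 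With $\sigma_d^{-1}(C)=\{[q]\}$ in hand, any class $[q']\ne[q]$ with $T\subseteq\Zc(q')$ has $\Zc(q')\ne C$, and fattening $\Zc(q')$ by a line outside $C$ when its degree is below $d$ produces a degree-$d$ curve $C_2\ne C$ through $T$, again contradicting $(\dagger)$; hence $[q]$ is unique and $(\ast)$ holds.

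The hard part will be exactly this bookkeeping between curves and their defining classes. Because Lemma \ref{R11} allows one curve to be cut out by several polynomials of different degrees, and because any curve of degree below $d$ can be fattened to degree $d$, the two ``fattening'' arguments — that a determined curve is reduced of full degree, and that there are no spurious lower-degree classes through $T$ — are what force the fibre $\sigma_d^{-1}(C)$ on the left to be a singleton and make it correspond to the unique-hyperplane condition on the right. Everything else is a formal consequence of the bijections $\tau_d$ and $\psi_d$.
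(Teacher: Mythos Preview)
The paper does not give its own proof of Lemma~\ref{R13}; it simply cites \cite[Lemma~12]{Hu1}. Your self-contained argument is correct: the dictionary $H\cap P=\psi_d(C\cap A)$ from Remark~\ref{R12}, the reformulation of $H\in\Gc_{N-1}(P)$ as uniqueness of the hyperplane through $\psi_d(T)$, and the two fattening arguments (multiplying by a linear form to force $\deg q=d$ in $(\ast)\Rightarrow(\dagger)$, and adjoining a line $\ell\not\subseteq C$ to show a determined curve has its reduced defining polynomial of degree exactly $d$, so that $\sigma_d^{-1}(C)$ is a singleton by Lemma~\ref{R11}, in $(\dagger)\Rightarrow(\ast)$) are all sound. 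The only small point you might make explicit is that such a line $\ell\not\subseteq C$ always exists, since a curve of degree $d$ contains at most $d$ lines as components.
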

\begin{proof}
See \cite[Lemma 12]{Hu1}.
\end{proof} 
Let $d\in\Nat$, $p(x,y)\in\Real_d[x,y]$ and $A\in\Pc\left(\Real^2\right)$ be finite. Since $A$ is finite, there is always a line $L=\Zc(rx+sy-t)$ in $\Real^2$ such that $L\cap A=\emptyset$. Therefore 
\begin{align*}
\Zc(p(x,y))\cap A&=\left( \Zc(p(x,y))\cap A\right)\cup (L\cap A)\\
&= \Zc\left(p(x,y)\cdot (rx+sy-t)^{d-\deg(p(x,y))}\right)\cap A.
\end{align*} 
From this observation, we get the next fact. 
\begin{rem}
\label{R14}
Let $d\in\Nat$, $p(x,y)\in\Real_d[x,y]$ and $A\in\Pc\left(\Real^2\right)$ be finite. Then there is $q(x,y)\in\Real[x,y]$ such that $\deg(q(x,y))=d$ and 
\begin{equation*}
\Zc(p(x,y))\cap A=\Zc(q(x,y))\cap A.
\end{equation*}
\end{rem}
\begin{lem}
\label{R15}
Let $e\in\Nat$, $f\in\Natz$,  $F$ be a proper flat in $\Real^{\binom{e+2}{2}-1}$ and $A\in\Pc_{\binom{f+2}{2}}\left(\Real^2\right)$ be such that  $A$ is not contained in an element of $\Cc_{\leq f}$.
\begin{enumerate}
\item[i)]If $e\geq f$, then $|\psi_e(A)\cap F|\leq 1+\dim F$.
\item[ii)]If $e<f$, then $|\psi_e(A)\cap F|\leq \binom{f+2}{2}-\binom{f-e+2}{2}-\binom{e+2}{2}+2+\dim F$.
\item[iii)]If $e<f$, then $\dim\psi_d(A\setminus\psi_e^{-1}(F))\geq \binom{f-e+2}{2}-1$ for all $d\geq f-e$.
\end{enumerate}
  \end{lem}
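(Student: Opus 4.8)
The plan is to phrase all three parts in terms of the single set $B:=A\cap\psi_e^{-1}(F)$. Since $\psi_e$ is injective (Remark \ref{R12} i) we have $|\psi_e(A)\cap F|=|B|$, so each part becomes a statement about $|B|$. Two observations drive everything. First, the hypothesis that $A$ lies on no element of $\Cc_{\leq f}$ together with $|A|=\binom{f+2}{2}$ forces the only polynomial of degree at most $f$ vanishing on $A$ to be $0$; equivalently $A$ imposes independent conditions on degree-$\le f$ polynomials. Second, for any finite $S\subseteq A$ one has the dimension identity $\dim\psi_e(S)=\binom{e+2}{2}-1-\dim V_e(S)$, where $V_e(S)$ is the space of polynomials of degree at most $e$ (constants included) vanishing on $S$; this is just the passage from the affine hull of $\{\psi_e(s)\}$ to the linear span of $\{(1,\psi_e(s))\}$, combined with the curve/hyperplane correspondence of Remark \ref{R12} ii.

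For part (i), I would first show that $\psi_e(A)$ is affinely independent when $e\geq f$. Since no nonzero degree-$\le f$ polynomial vanishes on $A$, the identity gives $\dim\psi_f(A)=\binom{f+2}{2}-1=|A|-1$, so $\psi_f(A)$ is affinely independent. As $I_f\subseteq I_e$ for $e\geq f$, the coordinate projection $\pi$ onto the coordinates indexed by $I_f$ satisfies $\pi\circ\psi_e=\psi_f$; a linear projection cannot turn an affinely dependent set into an independent one, so affine independence of $\pi(\psi_e(A))=\psi_f(A)$ forces affine independence of $\psi_e(A)$. A flat of dimension $\dim F$ contains at most $\dim F+1$ affinely independent points, whence $|\psi_e(A)\cap F|\leq 1+\dim F$.

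For parts (ii) and (iii), set $c:=\binom{e+2}{2}-1-\dim F$, which is positive because $F$ is proper. Applying the dimension identity to $B$ and using $\dim\psi_e(B)\leq\dim F$ gives $\dim V_e(B)\geq c$; in particular $V_e(B)$ contains a nonzero $p_1$ with $1\le\deg p_1\le e$ and $B\subseteq\Zc(p_1)$. For (iii), if $A\setminus B$ were contained in some $\Zc(q)$ with $\deg q\le f-e$, then $A\subseteq\Zc(p_1)\cup\Zc(q)=\Zc(p_1q)\in\Cc_{\leq f}$, a contradiction; hence $A\setminus B$ lies on no curve of degree at most $f-e$, so $\dim\psi_{f-e}(A\setminus B)=\binom{f-e+2}{2}-1$, and Remark \ref{R12} iii upgrades this to $\dim\psi_d(A\setminus B)\geq\binom{f-e+2}{2}-1$ for every $d\geq f-e$. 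For (ii) (where we may assume $B\neq\emptyset$, the bound being nonnegative otherwise), I would let $W$ be the $\binom{f-e+2}{2}$-dimensional space of polynomials of degree at most $f-e$ and form the product space $U:=V_e(B)\cdot W=\mathrm{span}\{pq:p\in V_e(B),\ q\in W\}$. Every element of $U$ has degree at most $f$ and vanishes on $B$, and a nonzero element is non-constant; thus if $\dim U>|A\setminus B|$, evaluation on $A\setminus B$ would have a nonzero kernel, producing a nonzero $P\in U$ vanishing on $(A\setminus B)\cup B=A$ with $\Zc(P)\in\Cc_{\leq f}$, a contradiction. Hence $|A\setminus B|\geq\dim U$, and combined with $|B|=\binom{f+2}{2}-|A\setminus B|$ this will give exactly the bound in (ii), provided $\dim U\geq c+\binom{f-e+2}{2}-1$.

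The one genuine obstacle is this last estimate, namely the product-dimension inequality $\dim(VW)\geq\dim V+\dim W-1$ for nonzero finite-dimensional subspaces of $\Real[x,y]$ (applied with $V=V_e(B)$, so $\dim V\geq c$); it is a bivariate analogue of the Cauchy--Davenport theorem. I would reduce it to one variable by a Kronecker substitution: choosing $M$ larger than every degree occurring in $V$, $W$, and $VW$, the ring homomorphism $\phi$ with $\phi(x)=t$, $\phi(y)=t^M$ sends distinct monomials of bounded degree to distinct powers of $t$, hence is injective on the relevant degrees, preserves the three dimensions, and satisfies $\phi(VW)=\phi(V)\phi(W)$. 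In $\Real[t]$ the inequality is immediate from leading degrees: reduced bases of the images have $\dim V$ and $\dim W$ distinct degrees, and their products realize $\dim V+\dim W-1$ distinct degrees, giving that many linearly independent elements. Transporting back through $\phi$ yields the two-variable statement and completes (ii).
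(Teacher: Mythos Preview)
Your proofs of (i) and (iii) are correct and follow essentially the same route as the paper: for (i) both arguments establish that $\psi_e(A)$ is affinely independent via the projection/monotonicity encoded in Remark~\ref{R12}\,iii, and for (iii) both argue by contradiction that $A\setminus\psi_e^{-1}(F)$ cannot lie on a curve of degree at most $f-e$, then invoke Remark~\ref{R12}\,iii to pass to $d\geq f-e$.

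Your proof of (ii) is also correct but takes a genuinely different path. The paper argues by a short induction on the codimension $\alpha(F)=\binom{e+2}{2}-2-\dim F$: the base case $\alpha(F)=0$ (a hyperplane) is handled by the counting argument you yourself use for (iii), and the inductive step enlarges $F$ by one point of $\psi_e(A)\setminus F$, gaining one in both $\dim F$ and $|\psi_e(A)\cap F|$. You instead interpret the problem through the vanishing ideal, forming $U=V_e(B)\cdot W$ and bounding $|A\setminus B|\geq\dim U$ via evaluation; the substance then sits in the product--dimension inequality $\dim(VW)\geq\dim V+\dim W-1$, which you reduce to the univariate case by a Kronecker substitution and a leading-degree count. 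Both arguments are elementary, but they trade off differently: the paper's induction stays entirely inside the flat/hyperplane language already set up (Remarks~\ref{R10} and~\ref{R12}) and needs no auxiliary lemma, while your route is more conceptual---it explains the bound as ``$A$ imposes independent conditions, and $V_e(B)\cdot W$ is a large space of degree-$\le f$ polynomials vanishing on $B$''---and isolates a reusable Cauchy--Davenport--type inequality for polynomial subspaces.
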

  \begin{proof}
  First we show i). If $f=0$, then $|A|=1$ so 
  \begin{equation*}
  |\psi_e(A)\cap F|\leq|\psi_e(A)|=1\leq 1+\dim F.
\end{equation*}
Thus, from now on, we assume that $f>0$.  Since $A$ is not contained in an element of $\Cc_{\leq f}$, Remark \ref{R12}.ii implies that $\psi_f(A)$ cannot be contained in a hyperplane of $\Real^{\binom{f+2}{2}-1}$ and therefore
  \begin{equation}
  \label{E1}
  \binom{f+2}{2}-1=\dim \psi_f(A).
  \end{equation}
 In so far as $e\geq f$,  Remark \ref{R12}.iii implies that $ \dim \psi_f(A)\leq \dim \psi_e(A)$ and thereby
   \begin{equation}
   \label{E2}
   \dim \psi_f(A)\leq \dim \psi_e(A)\leq |\psi_e(A)|-1=|A|-1.
  \end{equation}
 Since $|A|= \binom{f+2}{2}$, we get from (\ref{E1}) and (\ref{E2}) that 
  \begin{equation}
  \label{E3}
  \dim \psi_e(A)=|\psi_e(A)|-1=|A|-1.
  \end{equation}
  As a consequence of (\ref{E3}), for any $S\in \Pc(\psi_e(A))$, we get that $\dim S=|S|-1$; in particular, 
  \begin{equation*}
   \dim \psi_e(A)\cap F=|\psi_e(A)\cap F|-1, 
  \end{equation*}
   and hence 
    \begin{equation*}
  |\psi_e(A)\cap F|=1+ \dim \psi_e(A)\cap F\leq 1+\dim F,
  \end{equation*}
  which completes the proof of i).
  
  Now we prove ii). For any flat $E$ in $\Real^{\binom{e+2}{2}-1}$, write $\alpha(E):=\binom{e+2}{2}-2-\dim E$. In so far as $F$ is a proper flat, note that $\alpha(F)\geq 0$. The proof of ii) will be done by induction on $\alpha(F)$. First suppose that $\alpha(F)=0$ so $F$ is a hyperplane in $\Real^{\binom{e+2}{2}-1}$. Then Remark \ref{R12}.ii implies that $\psi_e^{-1}(F)\in\Cc_{\leq e}$. Trivially,
  \begin{equation}
  \label{E4}
  \psi_e^{-1}(F)\cap A\subseteq \psi_e^{-1}(F).
  \end{equation}
  We claim that 
  \begin{equation}
  \label{E5}
  |\psi_e^{-1}(F)\cap A|\leq \binom{f+2}{2}-\binom{f-e+2}{2}.
  \end{equation}
   Indeed, if (\ref{E5}) is false, then 
    \begin{equation*}
   |A\setminus \psi_e^{-1}(F)|=|A|- |\psi_e^{-1}(F)\cap A|<|A|- \binom{f+2}{2}+\binom{f-e+2}{2}=\binom{f-e+2}{2},
  \end{equation*}
   and therefore Remark \ref{R10}.iii implies that there exists $C\in\Cc_{\leq f-e}$ such that 
   \begin{equation}
   \label{E6}
   A\setminus \psi_e^{-1}(F)\subseteq C.
   \end{equation}
   However, from (\ref{E4}) and (\ref{E6}), 
   \begin{equation*}
   A=(\psi_e^{-1}(F)\cap A)\cup (A\setminus \psi_e^{-1}(F))\subseteq \psi_e^{-1}(F)\cup C
   \end{equation*}
   with $\psi_e^{-1}(F)\cup C\in \Cc_{\leq f}$  by  Remark \ref{R10}.i; this contradicts the assumption so (\ref{E5}) is  true. Therefore
   \begin{align*}
   |F\cap \psi_e(A)|&=|\psi_e^{-1}(F)\cap A|\\
    &\leq \binom{f+2}{2}-\binom{f-e+2}{2}&\Big(\text{by } (\ref{E5})\Big)\\
     &= \binom{f+2}{2}-\binom{f-e+2}{2}-\alpha(F)&\Big(\text{since } \alpha(F)=0\Big),
   \end{align*}
    and the basis of induction is complete. Now assume that the claim holds for all flats $E$ such that $0\leq \alpha(E)<\alpha(F)$. Since $A$ is not contained in an element of $\Cc_{\leq f}$ and $e<f$, we get that  $A$ is not contained in an element of $\Cc_{\leq e}$. Thus   Remark \ref{R12}.ii implies that $\psi_e(A)$ is not contained in a hyperplane and therefore $\dim \psi_e(A)=\binom{e+2}{2}-1$. In particular, this means that $\psi_e(A)\setminus F\neq \emptyset$, and then we can fix $\ab\in A\setminus \psi_e^{-1}(F)$. Set $E:=\Fl(F\cup \{\psi_e(\ab)\})$. Since $\psi_e(\ab)\not\in F$, we have that $\dim E=1+\dim F$, and therefore
    \begin{equation}
    \label{E7}
    \alpha(E)+1=\alpha(F).
\end{equation}
In so far as  $\psi_e(\ab)\in \psi_e(A)\setminus F$, we get that
\begin{equation}
\label{E8}
|F\cap \psi_e(A)|+1\leq |E\cap \psi_e(A)|.
\end{equation} 
Then
\begin{align*}
|F\cap \psi_e(A)|&\leq |E\cap \psi_e(A)|-1&\Big(\text{by } (\ref{E8})\Big)\\
 &\leq  \binom{f+2}{2}-\binom{f-e+2}{2}-\alpha(E)-1&\Big(\text{by induction}\Big)\\
 &=  \binom{f+2}{2}-\binom{f-e+2}{2}-\alpha(F),&\Big(\text{by } (\ref{E7})\Big)
\end{align*} 
 and this completes the induction and the proof of ii).
 
 Finally, we show iii). Since $F$ is a proper flat, there is a hyperplane $H$ which contains $F$.   Remark \ref{R12}.ii implies that $\psi_e^{-1}(H)\in\Cc_{\leq e}$. Since $H\supseteq F$,  
  \begin{equation}
  \label{E9}
  \psi_e^{-1}(F)\cap A\subseteq \psi_e^{-1}(H)\cap A\subseteq \psi_e^{-1}(H).
  \end{equation} 
We claim that   
  \begin{equation}
  \label{E10}
  \dim \psi_{f-e}(A\setminus \psi_e^{-1}(F))\geq \binom{f-e+2}{2}-1.
  \end{equation}
   Indeed, if (\ref{E10}) does not hold, then $\dim  \psi_{f-e}(A\setminus \psi_e^{-1}(F))< \binom{f-e+2}{2}-1$, and hence there exists a hyperplane $K$ in $\Real^{\binom{f-e+2}{2}-1}$ which contains $\psi_{f-e}(A\setminus \psi_e^{-1}(F))$ with  $\psi_{f-e}^{-1}(K)\in\Cc_{\leq f-e}$ by Remark \ref{R12}.ii.   Then 
  \begin{equation}
  \label{E11}
  A\setminus \psi_e^{-1}(F)\subseteq \psi_{f-e}^{-1}(K).
  \end{equation} 
   Nonetheless, from (\ref{E9}) and (\ref{E11}), 
   \begin{equation*}
   A=(\psi_e^{-1}(F)\cap A)\cup (A\setminus \psi_e^{-1}(F))\subseteq \psi_e^{-1}(H)\cup \psi_{f-e}^{-1}(K),
   \end{equation*}
   and Remark \ref{R10}.i yields that $\psi_e^{-1}(H)\cup \psi_{f-e}^{-1}(K)\in\Cc_{\leq f}$; this contradicts the assumption so (\ref{E10}) needs to be  true.  Now, in so far as $d\geq f-e$, Remark \ref{R12}.iii yields that   
  \begin{equation}
  \label{E12}
  \dim \psi_{d}(A\setminus \psi_e^{-1}(F))\geq \dim \psi_{f-e}(A\setminus \psi_e^{-1}(F)).
  \end{equation}
  Then iii) is a consequence of (\ref{E10}) and (\ref{E12}).
  \end{proof}
  \begin{lem}
\label{R16}
Let $e\in\Natz$ and  $A\in\Pc\left(\Real^2\right)$ be such that  $A$ is finite and it is not contained in an element of $\Cc_{\leq e}$. Denote by  $\Rc$  the family of elements  $B\in\Pc_{\binom{e+2}{2}}(A)$ such that $B$ is not contained in an element of $\Cc_{\leq e}$. Then
\begin{equation*}
|\Rc|\geq \frac{1}{2^{\binom{e+2}{2}-1}}|A|.
\end{equation*}
  \end{lem}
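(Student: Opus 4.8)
Set $N:=\binom{e+2}{2}$, so that $\psi_e$ sends $\Real^2$ into $\Real^{N-1}$ and every $B\in\Rc$ has $|B|=N$; note also that by Remark~\ref{R10}.iii the hypothesis forces $|A|\geq N$. The plan is to transport the problem across the Veronese embedding and reduce it to counting affinely independent subsets of a spanning point configuration. By Remark~\ref{R12}.i the map $\psi_e$ is injective, and by Remark~\ref{R12}.ii a set $S\subseteq\Real^2$ is contained in an element of $\Cc_{\leq e}$ if and only if $\psi_e(S)$ lies in a hyperplane of $\Real^{N-1}$, i.e.\ $\dim\psi_e(S)\leq N-2$. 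Hence the hypothesis reads $\dim\psi_e(A)=N-1$, and a subset $B\in\Pc_N(A)$ belongs to $\Rc$ exactly when $\dim\psi_e(B)=N-1$; since $|\psi_e(B)|=N$ and $\psi_e(B)\subseteq\Real^{N-1}$, this happens precisely when the $N$ points of $\psi_e(B)$ are affinely independent. Thus $\Rc$ is the collection of affine bases selectable from the spanning configuration $\psi_e(A)$, and I write $\Rc(A)$ to record the dependence on $A$.

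I would then isolate two elementary facts. First, every $\ab\in A$ lies in some member of $\Rc(A)$: beginning with the (affinely independent) singleton $\{\ab\}$ one repeatedly adjoins a point of $A$ whose image lies outside the current flat, which is always possible because $\psi_e(A)$ spans $\Real^{N-1}$ while the current flat is lower dimensional; each step raises $\dim$ by one, so after $N-1$ steps one obtains $N$ affinely independent points, i.e.\ an element of $\Rc(A)$ containing $\ab$. Second, whenever $|A|>N$ there is $\ab\in A$ with $A\setminus\{\ab\}$ still not contained in any element of $\Cc_{\leq e}$: choosing any $B_0\in\Rc(A)$ and any $\ab\in A\setminus B_0$ (available since $|B_0|=N<|A|$), monotonicity of $\dim$ gives $\dim\psi_e(A\setminus\{\ab\})\geq\dim\psi_e(B_0)=N-1$.

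The estimate then follows by induction on $|A|$. If $|A|=N$ the hypothesis gives $A\in\Rc(A)$, whence $|\Rc(A)|\geq 1\geq N/2^{N-1}$ (using $N\leq 2^{N-1}$). If $|A|>N$, pick $\ab$ as in the second fact and set $A':=A\setminus\{\ab\}$; then $A'$ satisfies the hypotheses with $|A'|=|A|-1\geq N$, every member of $\Rc(A')$ is also a member of $\Rc(A)$, and the first fact furnishes a member of $\Rc(A)$ through $\ab$ that is not counted in $\Rc(A')$. Therefore $|\Rc(A)|\geq|\Rc(A')|+1\geq\frac{|A|-1}{2^{N-1}}+1\geq\frac{|A|}{2^{N-1}}$ by the inductive hypothesis. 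I expect no real obstacle here: the only delicate points are the equivalence ``$B\in\Rc\Leftrightarrow\psi_e(B)$ affinely independent'' (where the cardinality match $|B|=N$ is what forces spanning to coincide with independence) and the guarantee that the greedy extension in the first fact never stalls, both of which rely solely on $\dim\psi_e(A)=N-1$. In fact the same argument delivers the stronger bound $|\Rc(A)|\geq |A|-N+1$, which already implies the stated inequality.
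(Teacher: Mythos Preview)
Your proof is correct and, like the paper, begins by pushing the question through the Veronese map to the problem of counting $N$-element affinely spanning subsets of a spanning configuration in $\Real^{N-1}$. From that point on the two arguments diverge.

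The paper proves the auxiliary bound $|\Sc_d(T)|\geq 2^{-d}|T|$ by induction on the ambient dimension~$d$: it fixes a hyperplane $H$ spanned by some $d$-subset of $T$ and splits into the cases $|T\cap H|\geq\tfrac{1}{2}|T|$ (apply the inductive hypothesis inside $H$ and adjoin a fixed point off $H$) and $|T\cap H|<\tfrac{1}{2}|T|$ (adjoin each off-$H$ point to the fixed basis of $H$). You instead induct on $|A|$: remove a point $\ab$ outside some chosen affine basis, apply the inductive hypothesis to $A\setminus\{\ab\}$, and recover one extra basis through $\ab$ via the greedy extension. Your route is more elementary (no case split, no dimension induction) and in fact yields the sharper estimate $|\Rc|\geq |A|-N+1$, which dominates $2^{-(N-1)}|A|$ for all $|A|\geq N$; the paper's bound has the weaker slope $2^{-(N-1)}$ but is more than enough for the applications downstream.

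One small remark: your invocation of Remark~\ref{R12}.i (injectivity of $\psi_e$) and the deduction $|A|\geq N$ from Remark~\ref{R10}.iii both presuppose $e\geq 1$. The case $e=0$ is degenerate ($\Cc_{\leq 0}=\emptyset$, so $\Rc=\{\{\ab\}:\ab\in A\}$ and $|\Rc|=|A|$); the paper disposes of it in one line before running its argument, and you should do the same.
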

  \begin{proof}
  Let $d\in\Nat$ and $T\in\Pc\left(\Real^d\right)$ be such that $\dim T=d$. Set $\Sc_d(T):=\\\left\{R\in\Pc_{d+1}\left(T\right):\;\dim R=d\right\}$. First we show that 
  \begin{equation}
  \label{E13}
  |\Sc_d(T)|\geq \frac{1}{2^d}|T|.
  \end{equation}
  We prove (\ref{E13}) by induction on $d$. If $d=1$, then 
  \begin{equation*}
  \Sc_1(T)=\left\{R\in\Pc_{2}\left(T\right):\;\dim R=1\right\}=\Pc_{2}\left(T\right)
  \end{equation*}
   so $|\Sc_1(T)|=\binom{|T|}{2}\geq \frac{1}{2}|T|$, and the basis of induction is complete. Assume that (\ref{E13}) holds for $d-1$ and we show it for $d$. Since $\dim T=d$, there exists $S\in \Pc_{d}(T)$ such that $\dim S=d-1$; fix $S\in \Pc_{d}(T)$ such that $\dim S=d-1$ and write $H:=\Fl(S)$. We have two cases.
   \begin{enumerate}
   \item[$\bullet$]Assume that $|T\cap H|\geq \frac{1}{2}|T|$. Fix $\tb\in T\setminus H$. Notice that for all $R\in \Sc_{d-1}(T\cap H)$, we have that $|R\cup\{\tb\}|=|R|+1=d$ and $\dim R\cup\{\tb\}=1+\dim R=d$ so $R\cup\{\tb\}\in \Sc_d(T)$. This map
   \begin{equation*}
   \Sc_{d-1}(T\cap H)\rightarrow \Sc_{d}(T),\qquad R\mapsto R\cup\{\tb\}
   \end{equation*}
    is injective so 
   \begin{equation}
   \label{E14}
  |\Sc_{d}(T)|\geq |\Sc_{d-1}(T\cap H)|.
\end{equation}    
By induction, $|\Sc_{d-1}(T\cap H)|\geq \frac{1}{2^{d-1}}|T\cap H|$ so (\ref{E14}) leads to
\begin{equation*}
 |\Sc_{d}(T)|\geq |\Sc_{d-1}(T\cap H)|\geq \frac{1}{2^{d-1}}|T\cap H|\geq \frac{1}{2^d}|T|,
\end{equation*}
 and this completes the induction in this case.
  \item[$\bullet$]Assume that $|T\cap H|< \frac{1}{2}|T|$. Note that for all $\tb\in T\setminus H$, we have that $|S\cup\{\tb\}|=|S|+1=d$ and $\dim S\cup\{\tb\}=1+\dim S=d$ so $S\cup\{\tb\}\in \Sc_d(T)$. The map
   \begin{equation*}
   T\setminus H\rightarrow \Sc_{d}(T),\qquad \tb\mapsto S\cup\{\tb\}
   \end{equation*}
    is injective so 
   \begin{equation*}
    |\Sc_{d}(T)|\geq |T\setminus H|=|T|-|T\cap H|>\frac{1}{2}|T|\geq \frac{1}{2^d}|T|,
\end{equation*}
and this completes the induction.   
   \end{enumerate}
   
  If $e=0$, then $\Cc_{\leq e}=\emptyset$ so $\Rc=\{\{\ab\}:\;\ab\in A\}$ which means that $|\Rc|=|A|$. Thus, from now on, assume that $e>0$. Since $A$ is not contained in an element of $\Cc_{\leq e}$, Remark \ref{R12}.ii implies that $\psi_e(A)$ is not contained in a hyperplane and then $\dim \psi_e(A)=\binom{e+2}{2}-1$. Applying (\ref{E13}) to $\psi_e(A)$ and  $\binom{e+2}{2}-1$, we get that
   \begin{equation}
   \label{E15}
   \left|\Sc_{\binom{e+2}{2}-1}(\psi_e(A))\right|\geq \frac{1}{2^{\binom{e+2}{2}-1}}|\psi_e(A)|=\frac{1}{2^{\binom{e+2}{2}-1}}|A|.
   \end{equation}
   For each $R\in \Sc_{\binom{e+2}{2}-1}(\psi_e(A))$, we have that  $\dim R=\binom{e+2}{2}-1$ so $R$ is not contained in a hyperplane, and hence, by Remark \ref{R12}.ii, $\psi_e^{-1}(R)$ is not contained in an element of $\Cc_{\leq e}$. Thus the map
      \begin{equation*}
  \Sc_{\binom{e+2}{2}-1}(\psi_e(A))\rightarrow \Rc,\qquad R\mapsto \psi_e^{-1}(R)
   \end{equation*}
    is well defined and injective yielding that
   \begin{equation*}
     |\Rc|\geq \left|\Sc_{\binom{e+2}{2}-1}(\psi_e(A))\right|,
\end{equation*} 
 and finally (\ref{E15}) implies the claim. 
   \end{proof}
   
   \begin{lem}
  \label{R17}
   Let $d\in\Nat$, $f\in[0,d-1]$,  $C_0\in\Cc_{d-f}$  and $B_0\in\Pc_{\binom{f+2}{2}}(\Real^2\setminus C_0)$ be such that $B_0$ is not contained in an element of $\Cc_{\leq f}$. Take  $B\in \Pc_{\binom{d+2}{2}-3}(\Real^2)$ such that   $B\supseteq B_0$ and  $B\cap C_0=B\setminus B_0$. Then, for any $e\in [d-f,d]$ and  $C\in\Cc_{\leq e}$ such that $C\supseteq C_0$, we have that 
\begin{equation}
\label{E16}
|B\cap C|< \binom{d+2}{2}-\binom{d-e+2}{2}-2.
\end{equation}     
  \end{lem}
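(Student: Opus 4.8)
The plan is to decompose the larger curve $C$ along $C_0$ and reduce the whole count to an intersection of $B_0$ with a low-degree curve. Since $C_0\in\Cc_{d-f}$, $C\in\Cc_{\leq e}$ and $C_0\subseteq C$, Remark \ref{R10}.ii provides $C_1\in\Cc_{\leq e-(d-f)}$ with $C=C_0\cup C_1$; I set $g:=e-(d-f)=e-d+f$, so that $g\in[0,f]$ because $e\in[d-f,d]$. Now I would use the hypothesis $B\cap C_0=B\setminus B_0$ together with $B_0\subseteq B$: writing $B\cap C=(B\cap C_0)\cup(B\cap C_1)$ and splitting $B\cap C_1$ into its part in $B_0$ and its part in $B\setminus B_0$, the latter part already lies in $B\cap C_0=B\setminus B_0$, so it contributes nothing new. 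Hence $B\cap C$ is the disjoint union of $B\setminus B_0$ and $B_0\cap C_1$ (disjoint since $B_0\cap C_1\subseteq B_0$), giving the clean identity $|B\cap C|=|B\setminus B_0|+|B_0\cap C_1|$. As $B_0\subseteq B$, the first term equals $\binom{d+2}{2}-3-\binom{f+2}{2}$.

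The core of the argument is the bound on $|B_0\cap C_1|$, and here I would invoke Lemma \ref{R15}. Assuming $g\geq 1$, write $C_1=\Zc(p_1)$ with $\deg p_1\in[1,g]$; by Remark \ref{R12}.ii the hyperplane $H:=\tau_g([p_1])$ in $\Real^{\binom{g+2}{2}-1}$ satisfies $\psi_g(C_1)=H\cap\psi_g(\Real^2)$, so the injectivity of $\psi_g$ (Remark \ref{R12}.i) yields $|B_0\cap C_1|=|\psi_g(B_0)\cap H|$. Since $B_0\in\Pc_{\binom{f+2}{2}}(\Real^2)$ is not contained in an element of $\Cc_{\leq f}$ and $g\leq f$, Lemma \ref{R15} applies with Veronese degree $g$ and the proper flat $H$, for which $\dim H=\binom{g+2}{2}-2$. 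When $g<f$, part ii) gives $|\psi_g(B_0)\cap H|\leq\binom{f+2}{2}-\binom{f-g+2}{2}-\binom{g+2}{2}+2+\dim H=\binom{f+2}{2}-\binom{f-g+2}{2}$; when $g=f$, part i) gives $|\psi_g(B_0)\cap H|\leq 1+\dim H=\binom{f+2}{2}-1=\binom{f+2}{2}-\binom{f-g+2}{2}$. So in both cases $|B_0\cap C_1|\leq\binom{f+2}{2}-\binom{f-g+2}{2}$, and since $f-g=d-e$ this reads $\binom{f+2}{2}-\binom{d-e+2}{2}$.

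Combining the two steps gives $|B\cap C|\leq\binom{d+2}{2}-3-\binom{f+2}{2}+\binom{f+2}{2}-\binom{d-e+2}{2}=\binom{d+2}{2}-\binom{d-e+2}{2}-3$, which is strictly below $\binom{d+2}{2}-\binom{d-e+2}{2}-2$, proving \eqref{E16}. I expect the only genuinely delicate point to be the two extreme values of $g$. For the boundary degree $g=0$ (that is, $e=d-f$) one has $\Cc_{\leq 0}=\emptyset$, so Remark \ref{R10}.ii must be read as $C=C_0$; then $|B\cap C|=|B\setminus B_0|=\binom{d+2}{2}-3-\binom{f+2}{2}$, and since $\binom{d-e+2}{2}=\binom{f+2}{2}$ this again equals $\binom{d+2}{2}-\binom{d-e+2}{2}-3$, so the strict inequality persists. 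For $g=f$ one must switch from part ii) to part i) of Lemma \ref{R15}, the two bounds agreeing because $\binom{f-g+2}{2}=1$. Once these boundary cases are checked, the remainder is routine bookkeeping with binomial coefficients.
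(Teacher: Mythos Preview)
Your proof is correct. Both you and the paper begin with the same decomposition $C=C_0\cup C_1$ via Remark~\ref{R10}.ii, and both ultimately rest on the fact that $B_0$ cannot be covered by a curve of degree at most $f$; the difference is in how that fact is deployed. The paper argues by contradiction: if \eqref{E16} fails then $|B_0\setminus C|\leq\binom{d-e+2}{2}-1$, so Remark~\ref{R10}.iii produces $C_2\in\Cc_{\leq d-e}$ with $B_0\setminus C\subseteq C_2$, and then $B_0\subseteq C_1\cup C_2\in\Cc_{\leq f}$, a contradiction. You instead compute directly the identity $|B\cap C|=|B\setminus B_0|+|B_0\cap C_1|$ and bound the second term by invoking Lemma~\ref{R15} on $B_0$ and the hyperplane $H=\tau_g([p_1])$. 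Since the base case of Lemma~\ref{R15}.ii is precisely the covering argument the paper reproduces inline, the two proofs are the same idea packaged differently: yours is cleaner in that it reuses existing machinery and yields the sharp value $|B\cap C|\leq\binom{d+2}{2}-\binom{d-e+2}{2}-3$ without contradiction, while the paper's is more self-contained, relying only on the elementary Remark~\ref{R10}.iii. Your handling of the boundary $g=0$ (reading Remark~\ref{R10}.ii as forcing $C=C_0$) is no more and no less informal than the paper's own use of $C_1\in\Cc_{\leq 0}$ at that endpoint.
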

  \begin{proof}
  Since $C_0\subseteq C$, Remark \ref{R10}.ii implies that there is $C_1\in\Cc_{\leq e+f-d}$ such that $C=C_0\cup C_1$. In so far as $B_0\cap C_0=\emptyset$, notice that 
\begin{equation}
\label{E17}
B_0\cap C=B_0\cap (C_0\cup C_1)=B_0\cap C_1\subseteq C_1.
\end{equation} 
If (\ref{E16}) is false, then
\begin{equation}
\label{EE17}
|B_0\setminus C|\leq |B\setminus C|=|B|-|B\cap C|\leq \binom{d-e+2}{2}-1.
\end{equation}
If $e=d$, then (\ref{EE17}) implies that $B_0\subseteq C=C_0\cup C_1$; hence, inasmuch as $B_0\cap C_0=\emptyset$, we get that $B_0\subseteq C_1$ but   this contradicts that $B_0$ is not contained in an element of $\Cc_{\leq f}$. If $e<d$, then, by (\ref{EE17}),    Remark \ref{R10}.iii yields the existence of  $C_2\in\Cc_{\leq d-e}$ such that 
 \begin{equation}
 \label{E18}
 B_0\setminus C\subseteq C_2.
 \end{equation}
Since $C_1\in\Cc_{\leq e+f-d}$  and $C_2\in\Cc_{\leq d-e}$, note that $C_1\cup C_2\in \Cc_{\leq f}$ by Remark \ref{R10}.i. Notice that $B_0\subseteq C_1\cup C_2$ by (\ref{E17}) and (\ref{E18}); however, this contradicts that $B_0$ is not contained in an element of $\Cc_{\leq f}$ and it proves (\ref{E16}).
  \end{proof}
  
  \begin{lem}
\label{R18}
Let $d\in\Nat$, $e\in[1,d-1]$,  $C\in\Cc_{e}$  and $B\in\Pc(\Real^2)$.
\begin{enumerate}
\item[i)]If $\dim \psi_{d-e}(B\setminus C)=\binom{d-e+2}{2}-1$, then $\dim \psi_{d}(B\cup C)=\binom{d+2}{2}-1$.
\item[ii)]If $\dim \psi_{d-e}(B\setminus C)=\binom{d-e+2}{2}-2$, then $\dim \psi_{d}(B\cup C)=\binom{d+2}{2}-2$.
\end{enumerate}
\end{lem}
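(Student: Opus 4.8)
The plan is to translate both statements into the language of vanishing spaces of polynomials and then reduce them to a single factorization identity. For a nonempty $S\subseteq\Real^2$ and $k\in\Nat$, write $W_k$ for the $\binom{k+2}{2}$-dimensional space of polynomials in $\Real[x,y]$ of degree at most $k$ (constants included) and $V_k(S):=\{p\in W_k:\ p(\ssb)=0\text{ for all }\ssb\in S\}$. By Remark \ref{R12}.ii the affine functionals on $\Real^{\binom{k+2}{2}-1}$ that vanish on $\psi_k(S)$ are exactly the elements of $V_k(S)$, so the affine hull of $\psi_k(S)$ has codimension $\dim V_k(S)$; that is,
\begin{equation*}
\dim\psi_k(S)=\binom{k+2}{2}-1-\dim V_k(S).
\end{equation*}
Under this dictionary (and noting that the hypotheses force $B\setminus C\neq\emptyset$) the assumptions become $\dim V_{d-e}(B\setminus C)=0$ in case i) and $\dim V_{d-e}(B\setminus C)=1$ in case ii), while the two conclusions read $\dim V_d(B\cup C)=0$ and $\dim V_d(B\cup C)=1$, respectively. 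Hence it suffices to prove the single identity $\dim V_d(B\cup C)=\dim V_{d-e}(B\setminus C)$.

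The key step is to fix $g\in\Real[x,y]$ of degree $e$ with $C=\Zc(g)$ and establish
\begin{equation*}
V_d(B\cup C)=g\cdot V_{d-e}(B\setminus C).
\end{equation*}
The inclusion $\supseteq$ is immediate: if $h\in W_{d-e}$ vanishes on $B\setminus C$, then $gh\in W_d$ vanishes on $B\setminus C$ and, since $g$ vanishes on $C\supseteq B\cap C$, on all of $B\cup C$. For $\subseteq$, take $p\in V_d(B\cup C)$; then $p$ vanishes on $C$, and I claim $g\mid p$. Granting this, write $p=gh$ with $\deg h\le d-e$; since each point of $B\setminus C$ lies off $C$, there $g\neq 0$ while $gh=p=0$, forcing $h$ to vanish on $B\setminus C$, i.e. $h\in V_{d-e}(B\setminus C)$. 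As multiplication by $g$ is injective on $\Real[x,y]$, the displayed identity yields $\dim V_d(B\cup C)=\dim V_{d-e}(B\setminus C)$, and combined with the first paragraph this proves both i) and ii).

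It remains to justify the divisibility $g\mid p$ from $p|_C=0$, and this is where the only genuine difficulty lies. Decomposing $g=r\prod_i g_i^{m_i}$ into irreducible factors, it is enough to show $g_i\mid p$ for each $i$. If the irreducible component $\Zc(g_i)$ carries infinitely many real points, then $p$ and $g_i$ vanish simultaneously at infinitely many points, so by the weak Bezout bound (Theorem \ref{R9}) they share an irreducible component; since $g_i$ is irreducible this forces $g_i\mid p$. The obstacle is that over $\Real$ an irreducible factor may contribute only finitely many real points to $C$ (for instance $\Zc(x^2+y^2)$), in which case $p|_C=0$ gives no control over $g_i$ and the identity can genuinely fail. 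I therefore expect the lemma to be intended for $C$ a \emph{genuine} curve, i.e. one whose defining polynomial is squarefree with each irreducible factor having infinitely many real points (equivalently, $g$ generating the full ideal of $C$); under that proviso the Bezout argument above closes the gap. The main work of a fully rigorous proof is thus to pin down this hypothesis on $C$ and to verify that it holds in the situations where Lemma \ref{R18} is applied.
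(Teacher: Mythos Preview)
Your approach is genuinely different from the paper's. You pass to vanishing spaces and aim for the single identity $\dim V_d(B\cup C)=\dim V_{d-e}(B\setminus C)$ via the multiplication map $h\mapsto gh$, which would give i) and ii) in one stroke. The paper instead argues each inequality separately at the level of zero sets. For i) it takes the contrapositive: a hyperplane through $\psi_d(B\cup C)$ yields $C_1\in\Cc_{\leq d}$ with $B\cup C\subseteq C_1$, and then Remark~\ref{R10}.ii splits $C_1=C\cup C_2$ with $C_2\in\Cc_{\leq d-e}$, forcing $B\setminus C\subseteq C_2$. For ii) the paper treats the two off-target values of $\dim\psi_d(B\cup C)$ separately; the subtle case $\dim\psi_d(B\cup C)<\binom{d+2}{2}-2$ is handled by combining Lemma~\ref{R11} (only finitely many polynomial classes cut out a given curve) with the infinitude of hyperplanes through a codimension-$2$ flat to extract two containing curves with \emph{distinct} zero sets, and then applying Remark~\ref{R10}.ii to each to get two distinct curves in $\Cc_{\leq d-e}$ through $B\setminus C$. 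Your route is more conceptual and economical; the paper's route never names polynomial divisibility and instead leans on the set-level decomposition Remark~\ref{R10}.ii together with the counting trick from Lemma~\ref{R11}.

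The gap you flag is real, and you have located it precisely: over $\Real$, $p|_C=0$ need not force $g\mid p$ when some irreducible factor of $g$ has finite real locus. You should observe, though, that the paper's proof carries the \emph{same} hidden assumption through Remark~\ref{R10}.ii. That remark asserts that $C\in\Cc_e$, $C_1\in\Cc_{\leq d}$, $C\subseteq C_1$ implies $C_1=C\cup C_2$ for some $C_2\in\Cc_{\leq d-e}$, which is exactly the set-theoretic shadow of $g\mid p$ and fails for the same kind of example (take $C=\Zc(x^2+y^2)=\{(0,0)\}\in\Cc_2$ and $C_1=\Zc(xy)\in\Cc_2$; then $C\subseteq C_1$ but $\Cc_{\leq 0}=\emptyset$). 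So your argument is at the same level of rigor as the paper's, and you have correctly isolated the tacit hypothesis---that every real irreducible factor of a defining polynomial of $C$ has infinite real zero set---needed to make either proof go through.
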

\begin{proof}
First we show i). Assume that $\dim \psi_{d}(B\cup C)<\binom{d+2}{2}-1$ so $\psi_{d}(B\cup C)$ is contained in a hyperplane of $\Real^{\binom{d+2}{2}-1}$. Then Remark \ref{R12}.ii implies that there is $C_1\in\Cc_{\leq d}$ such that $B\cup C\subseteq C_1$; in particular, $C\subseteq C_1$ and then Remark \ref{R10}.ii yields the existence of $C_2\in\Cc_{\leq d-e}$ such that $C_1=C\cup C_2$. In so far as $B\cup C\subseteq C_1=C\cup C_2$, we get that $B\setminus C\subseteq C_2$. Inasmuch as $C_2\in\Cc_{\leq d-e}$,  Remark \ref{R12}.ii  implies that $\psi_{d-e}(B\setminus C)$  is contained in a hyperplane of $\Real^{\binom{d-e+2}{2}-1}$ and therefore $\dim \psi_{d-e}(B\setminus C)<\binom{d-e+2}{2}-1$, which proves i).

Now we prove ii). Assume that $\dim \psi_{d}(B\cup C)\neq \binom{d+2}{2}-2$ and we will show that 
\begin{equation}
\label{E19}
\dim \psi_{d-e}(B\setminus C)\neq\binom{d-e+2}{2}-2.
\end{equation}
 We have to deal with two cases.
\begin{enumerate}
\item[$\bullet$]Suppose that  $\dim \psi_{d}(B\cup C)> \binom{d+2}{2}-2$ so $\dim \psi_{d}(B\cup C)=\binom{d+2}{2}-1$. We claim that 
\begin{equation}
\label{E20}
\dim \psi_{d-e}(B\setminus C)=\binom{d-e+2}{2}-1.
\end{equation}
Indeed, if (\ref{E20}) is false, there is a hyperplane of $\Real^{\binom{d-e+2}{2}-1}$ which contains $\psi_{d-e}(B\setminus C)$. Then, by Remark \ref{R12}.ii, there is a curve $C'\in\Cc_{\leq d-e}$ such that $B\setminus C\subseteq C'$. Hence
\begin{equation*}
B\cup C=(B\setminus C)\cup C\subseteq C'\cup C
\end{equation*}
 with $C'\cup C\in \Cc_{\leq d}$  by Remark \ref{R10}.i. This means that $\psi_d(B\cup C)$ is contained in a hyperplane by Remark \ref{R12}.ii, and thus $\dim \psi_{d}(B\cup C)<\binom{d+2}{2}-1$. This contradiction proves (\ref{E20}).
\item[$\bullet$]Suppose that $\dim \psi_{d}(B\cup C)< \binom{d+2}{2}-2$. On the one hand,  there are infinitely many hyperplanes in $\Real^{\binom{d+2}{2}-1}$ containing $\psi_d(B\cup C)$. On the other hand, for each curve $C'\in\Cc_{\leq d}$, there are only finitely many $[p(x,y)]\in\Real_d[x,y]/\sim$ such that $C'=\Zc(p(x,y))$ by Lemma \ref{R11}. Therefore we can choose $p_1(x,y),p_2(x,y)\in\Real_d[x,y]$ such that $\psi_d(B\cup C)\subseteq \tau_d([p_1(x,y)])$, $\psi_d(B\cup C)\subseteq \tau_d([p_2(x,y)])$ and $\Zc(p_1(x,y))\neq \Zc(p_2(x,y))$; write $C_1:=\Zc(p_1(x,y))$ and $C_2:=\Zc(p_2(x,y))$. Since $\psi_d(B\cup C)\subseteq \tau_d([p_1(x,y)])$, Remark \ref{R12}.ii implies that $B\cup C\subseteq C_1$. Then  Remark \ref{R10}.ii yields the existence of $C_3\in\Cc_{\leq d-e}$ such that $C_1=C\cup C_3$. Since  $B\cup C\subseteq C_1=C\cup C_3$, we get that $B\setminus C\subseteq C_3$. In so far as $C_3\in\Cc_{\leq d-e}$, Remark \ref{R12}.ii implies that there is a hyperplane $H_3$ in $\Real^{\binom{d-e+2}{2}-1}$ such that $\psi_{d-e}^{-1}(H_3)=C_3$ and  $\psi_{d-e}(B\setminus C)\subseteq H_3$. Proceeding in the same way with $C_2$, there exist $C_4\in\Cc_{\leq d-e}$ and a hyperplane $H_4$ in $\Real^{\binom{d-e+2}{2}-1}$ such that $C_2=C\cup C_4$,  $\psi_{d-e}^{-1}(H_4)=C_4$ and  $\psi_{d-e}(B\setminus C)\subseteq H_4$. Since $C_1\neq C_2$, notice that $C_3\neq C_4$ and therefore $H_3\neq H_4$. Now, since 
\begin{equation*}
\psi_{d-e}(B\setminus C)\subseteq H_3\cap H_4, 
\end{equation*} 
we get that  $\dim \psi_{d-e}(B\setminus C)<\binom{d-e+2}{2}-2$.
\end{enumerate}
Therefore, in any case, (\ref{E19}) is true and this proves ii). 
\end{proof}
\begin{lem}
\label{R19}
For any $n\in\Nat$, there are $c_6=c_6(n),c_7=c_7(n)\in\Real$ with the following property. For all $T\in\Pc_n\left(\Real^2\right)$ and $S\in\Pc\left(\Real^2\right)$ such that $|S|>c_6$ and $S\setminus T$ is not collinear, we have that 
\begin{equation*}
|\{L\in\Oc_2(S):\;L\cap T=\emptyset\}|\geq \frac{1}{2}|S|-c_7.
\end{equation*}
\end{lem}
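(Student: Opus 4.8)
The family in the statement should be read as the set $\Oc_1(S)$ of \emph{ordinary lines} of $S$, namely the lines meeting $S$ in exactly two points (each such line is automatically determined by $S$); this reading is forced by the hypothesis that $S\setminus T$ is not collinear, which is precisely the Sylvester--Gallai condition of Theorem \ref{R1} for lines. Thus the goal is to produce $\tfrac12|S|-c_7$ ordinary lines of $S$ disjoint from the $n$-point set $T$. The plan begins with a reduction to the case $T\cap S=\emptyset$. A line meets $S$ in exactly two points and avoids $T$ if and only if it meets $S\setminus T$ in exactly two points and avoids $T$; hence the ordinary lines of $S$ avoiding $T$ are exactly the ordinary lines of $S\setminus T$ avoiding $T$. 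Since $|S\setminus T|\geq|S|-n$ and $S\setminus T$ is still not collinear, replacing $S$ by $S\setminus T$ only shifts $c_6,c_7$ by $O(n)$, so from now on I assume $S\cap T=\emptyset$.

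By Theorem \ref{R2}, once $|S|>c_1$ we have $|\Oc_1(S)|\geq\tfrac12|S|$. Two distinct ordinary lines through a common point $t\notin S$ meet only at $t$, so their pairs of points of $S$ are disjoint; hence at most $\lfloor|S|/2\rfloor$ ordinary lines pass through any fixed $t\in T$. I would then split according to the number of ordinary lines. If $|\Oc_1(S)|\geq\tfrac{n+1}{2}|S|$, then at most $\tfrac{n}{2}|S|$ ordinary lines can meet $T$, and subtracting them leaves at least $\tfrac12|S|$ ordinary lines avoiding $T$, which already gives the bound in this regime.

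The substantial regime is $|\Oc_1(S)|<\tfrac{n+1}{2}|S|$. Here I would invoke the structure theorem of Green and Tao \cite{GT} (the source of Theorem \ref{R2}), which, for $|S|$ large in terms of $n$, forces $S$ to lie, up to $O(n)$ exceptional points, on an algebraic curve of degree at most three, where in the extremal case the points on an irreducible cubic form a coset for its group law. In each such configuration the ordinary lines can be described explicitly. In the cubic case, for instance, the group law makes essentially every secant through two coset points $3$-rich, hence not ordinary, so the genuine ordinary lines are the tangent-type lines of the cubic; there are at least $\tfrac12|S|-O(n)$ of these, and because the dual of a cubic has bounded degree (a count one can extract from B\'ezout, Theorem \ref{R9}), only $O(1)$ of these tangents pass through any fixed point. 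Hence each $t\in T$ destroys at most $O(1)$ of them, and at least $\tfrac12|S|-O(n)$ survive and avoid $T$. The reducible cases (near-pencils and conic-plus-line) are handled by the analogous, easier counts.

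The delicate point, and the one I expect to be the main obstacle, is precisely this last regime, together with the warning that one cannot simply bound ``ordinary lines meeting $T$'' by a constant: if $T$ happens to contain an \emph{omitted} point of the cubic coset, that point can lie on $\sim\tfrac12|S|$ ordinary lines (the secants whose missing third intersection it is), so the naive ``total minus bad'' estimate fails even after the reduction $T\cap S=\emptyset$. This is why the argument must instead \emph{exhibit} a robust supply of ordinary lines, the tangent-type family, which any fixed point can meet only $O(1)$ times, rather than subtract the bad ones from the total. Making the classification of ordinary lines precise in each degenerate cubic (smooth, nodal, cuspidal, and conic-plus-line), and tracking the constants through the reduction and through Theorem \ref{R2}, is what fixes the final values of $c_6(n)$ and $c_7(n)$.
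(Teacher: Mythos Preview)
The paper does not prove this lemma at all; it cites \cite[Lemma 2.5]{BVZ} and stops. Your reading of the family as ordinary lines is correct (the notation $\Oc_2(S)$ here is a slip for $\Oc_1(S)=\Oc_{1,2}(S)$, as is clear from the usage in Lemmas \ref{R33} and \ref{R34}), and your sketch is essentially the argument that appears in \cite{BVZ}: reduce to $S\cap T=\emptyset$, apply the Green--Tao structure theorem to $S$ in the regime of few ordinary lines, and in each of the structured configurations exhibit a family of $\tfrac12|S|-O(n)$ ordinary lines (tangent-type in the cubic case) such that any fixed point lies on only $O(1)$ of them.

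Your diagnosis of the obstacle is exactly right and matches what the cited proof has to confront: a point of $T$ that happens to be an omitted coset element can lie on $\sim\tfrac12|S|$ ordinary secants, so a naive subtraction from $|\Oc_1(S)|$ genuinely fails, and one must instead produce the tangent family. The only thing your proposal lacks relative to a complete proof is the explicit casework (smooth/nodal/cuspidal cubic, conic-plus-line, near-pencil) with the constants tracked; but since the paper itself defers to \cite{BVZ} for all of this, your proposal already goes further than the paper does.
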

\begin{proof}
See \cite[Lemma 2.5]{BVZ}.
\end{proof}
 \section{Families $\Nc_d(A)$}
   The purpose of this section is to define and prove some properties of the families $\Nc_d(A)$ which will be fundamental objects in the proof of the main results of this paper. 
   
   Let $d\in\Nat$ and $A\in\Pc(\Real^2)$. We denote by $\Nc_d(A)$ the family of subsets $B\in\Pc_{\binom{d+2}{2}-3}(A)$ which have following four properties:
   \begin{enumerate}
   \item[i)]$\dim \psi_d(B)=\binom{d+2}{2}-4$.
     \item[ii)]For all $e\in[1,d-1]$ and $C\in\Cc_e$, we have that $|B\cap C|<\binom{d+2}{2}-\binom{d-e+2}{2}$.
      \item[iii)]For all $e\in[1,d-1]$ and $C\in\Cc_e$ such that  $|B\cap C|=\binom{d+2}{2}-\binom{d-e+2}{2}-1$, we have that $\dim\psi_{d-e}(B\setminus C)=\binom{d-e+2}{2}-3$.
      \item[iv)]For all $e\in[1,d-1]$ and $C\in\Cc_e$ such that  $|B\cap C|<\binom{d+2}{2}-\binom{d-e+2}{2}-1$, we have that $\dim\psi_{d-e}(B\setminus C)>\binom{d-e+2}{2}-3$.
   \end{enumerate}
   
   We will need more notation.  For $d\in\Nat$, $e\in[1,d-1]$, $A,B,C \in\Pc(\Real^2)$ and $D\in\Pc(B)$, write
   
   \begin{align*}
   V_e(D)&:=\Fl(\psi_e(D))\\
   W_e(B,D)&:=\Fl(\psi_{d-e}(B\setminus \psi^{-1}_e(V_e(D))))\\
   \alpha_e(D)&:=\binom{e+2}{2}-2-\dim V_e(D)\\
   \beta_e(B,D)&:=\binom{d-e+2}{2}-3-\dim W_e(B,D)\\
   \gamma_e(B,D)&:=|V_e(D)\cap \psi_e(B)|\\
   \mu_e(B,D)&:=\left\{ \begin{array}{ll}
0& \text{if $\alpha_e(D)<0$};\\
&\\
 \alpha_e(D)+ \gamma_e(B,D)+\binom{d-e+2}{2} & \text{if $\alpha_e(D)\geq 0$}.\end{array} \right.
   \end{align*}
   \begin{align*}
\tau_e(B,D)&:=\left\{ \begin{array}{ll}
0& \text{if $\min\{\alpha_e(D),\beta_e(B,D)\}<0$ or}\\
&\text{$\gamma_e(B,D)>\binom{d+2}{2}-\binom{d-e+2}{2}-1$};\\
&\\
 \alpha_e(D)+ \beta_e(B,D)+|B|+2 & \text{if $\min\{\alpha_e(D),\beta_e(B,D)\}\geq 0$ and}\\
&\text{$\gamma_e(B,D)=\binom{d+2}{2}-\binom{d-e+2}{2}-1$};\\
&\\
 \alpha_e(D)+ \beta_e(B,D)+|B|+3 & \text{if $\min\{\alpha_e(D),\beta_e(B,D)\}\geq 0$ and}\\
&\text{$\gamma_e(B,D)<\binom{d+2}{2}-\binom{d-e+2}{2}-1$}.\end{array} \right.
   \end{align*}
     \begin{align*}
U_e(B,D)&:=\left\{ \begin{array}{ll}
\psi_d^{-1}(V_d(B))& \text{if $\alpha_e(D)<0$};\\
&\\
 \psi_d^{-1}(V_d(B))\cup\psi_e^{-1}(V_e(D)) & \text{if $\beta_e(B,D)<0\leq\alpha_e(D)$};\\
&\\
 \psi_d^{-1}(V_d(B))\cup\psi_e^{-1}(V_e(D))\cup \psi_{d-e}^{-1}(W_e(B,D)) & \text{if $\beta_e(B,D),\alpha_e(D)\geq 0$}.\end{array} \right.
   \end{align*}
   \begin{align*}
   I(B,C)&:=\{(f,E)\in [1,d-1]\times \Pc(B):\;C\nsubseteq U_f(B,E)\}\\
   \Nc_d(A,B,C)&:=\{E\in\Nc_d(A):\;B\in\Pc(E)\text{ and }E\cap C=E\setminus B\}.
   \end{align*}
   The following facts are direct consequences of the definitions.
   \begin{rem}
   \label{R20}
   Let $d\in\Nat$, $e\in[1,d-1]$, $B\in \Pc\left(\Real^2\right)$ and $D\in\Pc(B)$.
   \begin{enumerate}
   \item[i)]Note that 
   \begin{equation*}
   B\subseteq \left(B\cap \psi_e^{-1}(V_e(D))\right)\cup \left(B\cap \psi_{d-e}^{-1}(W_e(B,D))\right);
   \end{equation*}
    in particular, 
     \begin{equation*}
   |B|\leq \left|B\cap \psi_e^{-1}(V_e(D))\right|+ \left|B\cap \psi_{d-e}^{-1}(W_e(B,D))\right|.
   \end{equation*}
   \item[ii)]Let  $B_2\in \Pc\left(\Real^2\right)$, $B_1,D_2\in\Pc(B_2)$ and $D_1\in\Pc(B_1)$.  If  $V_e(D_1)\subseteq V_e(D_2)\neq \Real^{\binom{e+2}{2}-1}$ and $W_e(B_1,D_1)=W_e(B_2,D_2)$, then 
   \begin{equation*}
   U_e(B_1,D_1)\subseteq U_e(B_2,D_2). 
   \end{equation*}
    \item[iii)]Let $B_2\in \Pc\left(\Real^2\right)$, $B_1,D_2\in\Pc(B_2)$ and $D_1\in\Pc(B_1)$.  If  $V_e(D_1)= V_e(D_2)$, then $W_e(B_1,D_1)\subseteq W_e(B_2,D_2)$ and
   \begin{equation*}
   U_e(B_1,D_1)\subseteq U_e(B_2,D_2). 
   \end{equation*}
        \item[iv)]If $\min\{\alpha_e(D),\beta_e(B,D)\}\geq 0$, then 
     \begin{equation*}
     \tau_e(B,D)\leq  \alpha_e(D)+ \beta_e(B,D)+|B|+3.
     \end{equation*}
\end{enumerate}    
   \end{rem}
    The main results of this section are Lemma \ref{R24} and Lemma \ref{R28}. Their proofs are rather technical so, for the sake of comprehension, we sketch them before we state the auxiliary results that we need to prove them. We will say that $A\in\Pc(\Real^2)$ is \emph{$d$-regular} if $|A\cap C|\leq \frac{1}{2^{2^{3d+8}}}|A|$ for all $C\in\Cc_{\leq d}$. 
    \begin{enumerate}
    \item[$\bullet$]We start with Lemma \ref{R24}.  Assume that $A$ is  $d$-regular and  take  $B_0:=\emptyset$. Given $\obb=(\bb_1,\bb_2,\ldots, \bb_{\binom{d+2}{2}-3})\in A^{\binom{d+2}{2}-3}$, we will construct recursively a chain $B_0\subsetneq B_1\subsetneq \ldots\subsetneq B_{\binom{d+2}{2}-3}$ with $B_{i+1}=B_i\cup\{\bb_{i+1}\}$ for each $i\in[0,\binom{d+2}{2}-4]$. The idea is that for many $\obb\in A^{\binom{d+2}{2}-3}$, we get $B_{\binom{d+2}{2}-3}\in \Nc_d(A,\emptyset,\Real^2)$. On the one hand, for each $i\in[0,\binom{d+2}{2}-4]$, if $\bb_{i+1}$ is not   in a forbidden subset $U_i$ of $\Real^2$, then  $B_{\binom{d+2}{2}-3}\in \Nc_d(A,\emptyset,\Real^2)$; here we will use Lemma \ref{R22} and Lemma \ref{R23}. On the other hand, since $A$ is $d$-regular,  for each $i\in[0,\binom{d+2}{2}-4]$,  the intersection of the forbidden subset $U_i$ and $A$ is very small so $B_{\binom{d+2}{2}-3}\in \Nc_d(A,\emptyset,\Real^2)$ almost always.
    
    \item[$\bullet$]Now we sketch the proof  of Lemma \ref{R28}.   Given $\obb=\\(\bb_{\binom{f+2}{2}+1},\bb_{\binom{f+2}{2}+2},\ldots, \bb_{\binom{d+2}{2}-3})\in (A\cap C_0)^{\binom{d+2}{2}-\binom{f+2}{2}-3}$, we will construct recursively a chain $B_0\subsetneq B_1\subsetneq \ldots\subsetneq B_{\binom{d+2}{2}-\binom{f+2}{2}-3}$ with $B_{i+1}=B_i\cup\{\bb_{\binom{f+2}{2}+i+1}\}$ for each $i\in[0,\binom{d+2}{2}-\binom{f+2}{2}-4]$. As in Lemma \ref{R24}, for many $\obb\in(A\cap C_0)^{\binom{d+2}{2}-\binom{f+2}{2}-3}$, we get $B_{\binom{d+2}{2}-3}\in \Nc_d(A,B_0,C_0)$. As in the sketch of Lemma \ref{R24}, we need that $\bb_{\binom{f+2}{2}+i+1}$  is not in a forbidden subset $U_i$ of $\Real^2$ for each $i\in[0,\binom{d+2}{2}-\binom{f+2}{2}-4]$; most of auxiliary results of this section are used to show this.  Now, in this case, we have that  $A\cap U_i$ is very small because of Bezout's theorem (applied to some curves that intersect $C_0$). 
    \end{enumerate}

   The next properties will shorten the proofs of some auxiliary results
   \begin{lem}
      \label{R21}
      Let $e,d\in \Nat$ with $e\in[1,d-1]$, $B\in \Pc\left(\Real^2\right)$ and $D\in\Pc(B)$.
\begin{enumerate}
\item[i)]If $\alpha_e(D)\geq 0$, then  there is $C\in\Cc_{\leq e}$ such that $\psi_e^{-1}(V_e(D))\subseteq C$
\item[ii)]If $\beta_e(B,D)\geq -1$, then  there is $C\in\Cc_{\leq d-e}$ such that $\psi_{d-e}^{-1}(W_{d-e}(D))\subseteq C$
\item[iii)] If $|B|\leq \binom{d+2}{2}-1$,  then  there is $C\in\Cc_{\leq d}$ such that $\psi_{d}^{-1}(V_d(B))\subseteq C$.
\item[iv)] If $|B|\leq \binom{d+2}{2}-1$,  then there are $C_1\in\Cc_{\leq e},\,C_2\in\Cc_{\leq d-e} $ and $C_3\in\Cc_{\leq d}$ such that $U_e(B,D)\subseteq C_1\cup C_2\cup C_3$. 
\end{enumerate}      
    \end{lem}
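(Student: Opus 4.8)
The plan is to reduce all four parts to a single elementary observation: \emph{every proper flat of $\Real^{\binom{k+2}{2}-1}$ is contained in a hyperplane, and the $\psi_k$-preimage of a hyperplane belongs to $\Cc_{\leq k}$}. The second half of this observation is exactly Remark \ref{R12}.ii: if $H=\tau_k([p(x,y)])$ is a hyperplane, then $\psi_k^{-1}(H)=\Zc(p(x,y))\in\Cc_{\leq k}$. Hence, for parts i), ii) and iii) it suffices to check that the flat in question ($V_e(D)$, $W_e(B,D)$ and $V_d(B)$, respectively) is proper, i.e. has dimension strictly less than that of the ambient space; then enlarging it to a hyperplane $H$ and setting $C:=\psi_k^{-1}(H)$ finishes the argument, since $\psi_k^{-1}$ is monotone under inclusion.

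For the properness in each case I would simply unwind the relevant definition. In i), the hypothesis $\alpha_e(D)\geq 0$ reads $\binom{e+2}{2}-2-\dim V_e(D)\geq 0$, so $\dim V_e(D)\leq\binom{e+2}{2}-2<\binom{e+2}{2}-1$ and $V_e(D)$ is proper in $\Real^{\binom{e+2}{2}-1}$. In ii), the hypothesis $\beta_e(B,D)\geq -1$ reads $\binom{d-e+2}{2}-3-\dim W_e(B,D)\geq -1$, giving $\dim W_e(B,D)\leq\binom{d-e+2}{2}-2<\binom{d-e+2}{2}-1$, so $W_e(B,D)$ is proper in $\Real^{\binom{d-e+2}{2}-1}$ (the displayed subscript in the statement should be read as $W_e(B,D)$). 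In iii), since $\psi_d$ is injective by Remark \ref{R12}.i, the flat $V_d(B)=\Fl(\psi_d(B))$ is spanned by $|B|\leq\binom{d+2}{2}-1$ points, whence $\dim V_d(B)\leq |B|-1\leq\binom{d+2}{2}-2$ and $V_d(B)$ is again proper; the degenerate case $B=\emptyset$ is trivial because then $\psi_d^{-1}(V_d(B))=\emptyset$ is contained in any curve.

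Finally, for iv) I would argue by cases according to the three branches of the definition of $U_e(B,D)$, in each branch assembling the bound from the pieces already produced. When $\alpha_e(D)<0$ only part iii) is needed; when $\beta_e(B,D)<0\leq\alpha_e(D)$ I combine iii) with i); and when $\alpha_e(D),\beta_e(B,D)\geq 0$ I combine all of i), ii), iii), using $\beta_e(B,D)\geq 0\geq -1$ so that ii) applies. In the branches that do not use all three curves I fill the unused slots with arbitrary members of $\Cc_{\leq e}$, $\Cc_{\leq d-e}$ and $\Cc_{\leq d}$, which exist precisely because $1\leq e\leq d-1$ forces each of these families to be nonempty. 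I do not anticipate a genuine obstacle here, as the content is essentially bookkeeping; the only points meriting a moment's care are the empty-flat degeneration in iii) and the implication $\beta_e(B,D)\geq 0\Rightarrow\beta_e(B,D)\geq -1$ that licenses the use of ii) inside the third branch.
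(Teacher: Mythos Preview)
Your proposal is correct and follows essentially the same approach as the paper: in each of i)--iii) the hypothesis is unwound to show the relevant flat is proper, it is then enlarged to a hyperplane, and Remark \ref{R12}.ii identifies the preimage as a curve of the appropriate degree; part iv) is assembled from i)--iii). Your treatment is in fact slightly more detailed than the paper's (you make the case split for $U_e(B,D)$ explicit and note how to fill unused slots), but the argument is the same.
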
  
     \begin{proof}
       First we show i). In so far as $\alpha_e(D)\geq 0$, notice that $V_e(D)$ is a proper flat in $\Real^{\binom{e+3}{2}-1}$. Hence there exists a hyperplane $H$ containing $V_e(D)$. Remark \ref{R12}.ii implies that $\psi_e^{-1}(H)$ is in $\Cc_{\leq e}$ and it satisfies that  $\psi^{-1}_e(V_e(D))\subseteq \psi_e^{-1}(H)$, and this proves i).

We prove ii). Since  $\beta_e(B,D)\geq -1$,   $W_e(B,D)$ is a proper flat in $\Real^{\binom{d-e+2}{2}-1}$.  This implies that there exists a hyperplane $H$ containing $W_e(B,D)$. Remark \ref{R12}.ii yields that $\psi_{d-e}^{-1}(H)$ is in $\Cc_{\leq d-e}$ and it satisfies that $ \psi^{-1}_{d-e}(W_e(B,D))\subseteq \psi_{d-e}^{-1}(H)$, which proves ii).

 To prove iii),  note that since  $|B|\leq \binom{d+2}{2}-1$,  
      \begin{equation*}
         \dim V_d(B)=\dim \Fl(\psi_d(B))\leq |\psi_d(B)|-1=|B|-1\leq \binom{d+2}{2}-2.
      \end{equation*}
     Thus there is a hyperplane $H$ containing $V_d(B)$, and then, by Remark \ref{R12}.ii, $\psi_d^{-1}(H)$ is in $\Cc_{\leq d}$ and it satisfies that $\psi^{-1}_d(V_d(B))\subseteq \psi_d^{-1}(H)$.

    Finally, notice that iv) is a consequence of i), ii) and iii).
       \end{proof} 
       \begin{lem}
   \label{R22}
   Let $d\in\Nat$, $A,C\in\Pc\left(\Real^2\right)$ and $B\in\Pc(A)$, and write $U:=\bigcup_{(e,D)\in I(B,C)}U_e(B,D)$. Assume that $A\setminus U\neq\emptyset$ and $\max\left\{\tau_e(B,D),\mu_e(B,D)\right\}<\binom{d+2}{2}$ for all $(e,D)\in I(B,C)$. Then, for all $\bb\in A\setminus U$ and $(e,D)\in  I(B\cup\{\bb\},C)$,
   \begin{equation*}
   \max\left\{\tau_e(B\cup\{\bb\},D),\mu_e(B\cup\{\bb\},D)\right\}<\binom{d+2}{2}.
   \end{equation*}
   \end{lem}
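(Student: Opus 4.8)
The plan is to fix a pair $(e,D)\in I(B\cup\{\bb\},C)$, write $B':=B\cup\{\bb\}$, and bound $\tau_e(B',D)$ and $\mu_e(B',D)$ by comparing $(e,D)$ against a carefully chosen reference pair $(e,D')$ with $D'\in\Pc(B)$, so that the hypothesis applies to $(e,D')$ while the monotonicity of $U_e$ in Remark \ref{R20}.ii and \ref{R20}.iii transfers information between the two pairs. I may assume $\bb\notin B$, since otherwise $B'=B$ and there is nothing to prove. The guiding principle is that $\bb\notin U$ either forces the relevant invariants to stay put or makes the pair $(e,D)$ impossible.

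First I would treat the case $\bb\notin D$, so that $D\in\Pc(B)$. Since $V_e(D)$ depends only on $D$, Remark \ref{R20}.iii (with $B_1=B$, $B_2=B'$, $D_1=D_2=D$) gives $U_e(B,D)\subseteq U_e(B',D)$, so $C\nsubseteq U_e(B',D)$ yields $C\nsubseteq U_e(B,D)$, i.e.\ $(e,D)\in I(B,C)$; in particular $\bb\notin U_e(B,D)$ and the hypothesis bounds $\tau_e(B,D),\mu_e(B,D)$. If $\alpha_e(D)<0$ both new quantities vanish. Otherwise $\psi_e^{-1}(V_e(D))\subseteq U_e(B,D)$, so $\psi_e(\bb)\notin V_e(D)$, $\gamma_e$ is unchanged and $\mu_e(B',D)=\mu_e(B,D)$; moreover, if $\beta_e(B',D)\geq0$ and $\gamma_e$ stays below the threshold (the other branches give $\tau_e=0$ immediately), then $\psi_{d-e}(\bb)\notin W_e(B,D)$ so $\bb$ contributes a genuinely new direction to $W_e$, whence $\beta_e$ drops by exactly $1$ while $|B|$ rises by $1$ and $\tau_e(B',D)=\tau_e(B,D)$. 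Both stay below $\binom{d+2}{2}$.

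Next I would handle $\bb\in D$, setting $D_0:=D\setminus\{\bb\}\in\Pc(B)$; if $\alpha_e(D)<0$ both quantities are $0$, so assume $\alpha_e(D)\geq0$. The crux is whether adjoining $\bb$ enlarges the flat. Put $P:=\{\ab\in B:\psi_e(\ab)\in V_e(D)\setminus V_e(D_0)\}$. If $\psi_e(\bb)\in V_e(D_0)$, or if $\psi_e(\bb)\notin V_e(D_0)$ but $P\neq\emptyset$, I would exhibit a reference set $\hat D\in\Pc(B)$ with $V_e(\hat D)=V_e(D)$: take $\hat D=D_0$ in the first situation, and $\hat D=D_0\cup\{\ab\}$ for some $\ab\in P$ in the second, where $\dim V_e(D)=\dim V_e(D_0)+1$ forces $V_e(D_0\cup\{\ab\})=V_e(D)$. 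Since $\psi_e(\bb)\in V_e(\hat D)$ and $\alpha_e(\hat D)=\alpha_e(D)\geq0$, we get $\bb\in\psi_e^{-1}(V_e(\hat D))\subseteq U_e(B,\hat D)$; were $(e,\hat D)\in I(B,C)$ this would give $\bb\in U$, contradicting $\bb\in A\setminus U$. Hence $C\subseteq U_e(B,\hat D)$, and Remark \ref{R20}.iii (using $V_e(\hat D)=V_e(D)$) gives $U_e(B,\hat D)\subseteq U_e(B',D)$, so $C\subseteq U_e(B',D)$, contradicting $(e,D)\in I(B',C)$. Thus no such pair occurs.

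The remaining subcase $\psi_e(\bb)\notin V_e(D_0)$ with $P=\emptyset$ is the one I expect to be the main obstacle, since the monotonicity of $U_e$ in its $W_e$-component can fail in general; here $P=\emptyset$ is exactly what rescues it. Because $\bb$ is deleted in forming $W_e(B',D)$ and no point of $B$ lies in $V_e(D)\setminus V_e(D_0)$, the defining sets of $W_e(B,D_0)$ and $W_e(B',D)$ coincide, so $W_e(B,D_0)=W_e(B',D)$ and Remark \ref{R20}.ii applies to give $U_e(B,D_0)\subseteq U_e(B',D)$, hence $(e,D_0)\in I(B,C)$ and the hypothesis bounds $(e,D_0)$. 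One then records the exact bookkeeping $\alpha_e(D)=\alpha_e(D_0)-1$, $\beta_e(B',D)=\beta_e(B,D_0)$, $\gamma_e(B',D)=\gamma_e(B,D_0)+1$ and $|B'|=|B|+1$: the $-1$ in $\alpha_e$ absorbs the $+1$ in $|B|$, while the $+1$ in $\gamma_e$ is matched by the threshold in the definition of $\tau_e$, yielding $\mu_e(B',D)=\mu_e(B,D_0)$ and $\tau_e(B',D)\leq\tau_e(B,D_0)$, both $<\binom{d+2}{2}$. This exhausts all cases and proves the claim.
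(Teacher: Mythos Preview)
Your proposal is correct and follows essentially the same approach as the paper's proof: the same case split on $\bb\in D$ versus $\bb\notin D$, the same reference sets ($D$ itself, $D_0=D\setminus\{\bb\}$, or $D_0\cup\{\ab\}$ for $\ab\in P$), the same appeals to Remark~\ref{R20}.ii and~\ref{R20}.iii, and the same bookkeeping identities for $\alpha_e,\beta_e,\gamma_e$. The only difference is organizational: you first dispose of the subcases $\psi_e(\bb)\in V_e(D_0)$ and $P\neq\emptyset$ as contradictory with $(e,D)\in I(B',C)$, whereas the paper proves the corresponding facts $P=\emptyset$ (equation~(\ref{E40})) and $\psi_e(\bb)\notin V_e(D_0)$ (equation~(\ref{E38})) as intermediate claims inside a single running argument.
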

   \begin{proof}
   Fix $\bb\in A\setminus U$  and $(e,D)\in  I(B\cup\{\bb\},C)$. If $\alpha_e(D)<0$, then $\tau_e(B\cup\{\bb\},D)$ and $\mu_e(B\cup\{\bb\},D)$ are zero so we assume from now on that
   \begin{equation}
   \label{E21}
   \alpha_e(D)\geq 0.
\end{equation} 
We divide the proof of the claim into two cases.
\begin{enumerate}
\item[$\bullet$] First assume that $\bb\not\in D$. Then $D$ is a subset of $B$ and Remark \ref{R20}.iii implies that $U_e(B,D)\subseteq U_e(B\cup\{\bb\},D)$; this implies that $(e,D)\in  I(B,C)$ since $(e,D)\in  I(B\cup\{\bb\},C)$. The inclusion  $(e,D)\in  I(B,C)$ leads to
\begin{equation}
\label{E22}
\max\left\{\tau_e(B,D),\mu_e(B,D)\right\}<\binom{d+2}{2}.
\end{equation}
Because $(e,D)\in  I(B,C)$, we have that  $\bb\not\in U_e(B,D)$. Notice that  (\ref{E21}) yields that $\psi^{-1}_e(V_e(D))\subseteq U_e(B,D)$ so $\bb\not\in \psi^{-1}_e(V_e(D))$ and thereby
\begin{align}
\label{E23}
\gamma_e(B\cup\{\bb\},D)=|V_e(D)\cap \psi_e(B\cup\{\bb\})|=|V_e(D)\cap \psi_e(B)|=\gamma_e(B,D).
\end{align} 
Thus 
\begin{align}
\label{E24}
\mu_e(B\cup\{\bb\},D)&=\alpha_e(D)+ \gamma_e(B\cup\{\bb\},D)+\binom{d-e+2}{2}&\Big(\text{by (\ref{E21})}\Big)\nonumber\\
&=\alpha_e(D)+ \gamma_e(B,D)+\binom{d-e+2}{2}&\Big(\text{by (\ref{E23})}\Big)\nonumber\\
&=\mu_e(B,D)&\Big(\text{by (\ref{E21})}\Big)\nonumber\\
&<\binom{d+2}{2}.&\Big(\text{by (\ref{E22})}\Big)
\end{align}
If $\beta_e(B,D)<0$, then $\dim W_e(B,D)>\binom{d-e+2}{2}-3$ so 
\begin{equation*}
\dim W_e(B\cup\{\bb\},D)\geq \dim W_e(B,D)>\binom{d-e+2}{2}-3,
\end{equation*}
 and therefore $\beta_e(B\cup\{\bb\},D)<0$; thus, if  $\beta_e(B,D)<0$, we have that $\tau_e(B\cup\{\bb\},D)=0$, and then by (\ref{E24}) we are done. From now on, we assume that 
  \begin{equation}
   \label{E25}
   \beta_e(B,D)\geq 0.
\end{equation}
From (\ref{E21}) and (\ref{E25}), note that $\alpha_e(D),\beta_e(B,D)\geq 0$ so, since  $\bb\not\in U_e(B,D)$, notice that $\bb\not\in\psi^{-1}_{e}(V_e(D))$ and $\bb\not\in\psi^{-1}_{d-e}(W_e(B,D))$; this leads to $\psi_{d-e}(\bb)\\\in W_e(B\cup\{\bb\},D)\setminus W_e(B,D)$, and then $W_e(B,D)$ is a flat properly contained in $W_e(B\cup\{\bb\},D)$ implying that 
\begin{equation*}
\dim W_e(B\cup\{\bb\},D)\geq 1+\dim W_e(B,D),
\end{equation*}
 and therefore
\begin{equation}
\label{E26}
\beta_e(B,D)\geq \beta_{e}(B\cup\{\bb\},D)+1.
\end{equation}
If $\beta_{e}(B\cup\{\bb\},D)<0$ or $\gamma_e(B\cup\{\bb\},D)>\binom{d+2}{2}-\binom{d-e+2}{2}-1$, then $\tau_e(B\cup\{\bb\},D)=0$, and we are done by (\ref{E24}). Thus we assume that this is not the case, and then  (\ref{E23}) and (\ref{E26}) lead to 
\begin{align}
\label{E27}
\beta_e(B,D)-1&\geq \beta_{e}(B\cup\{\bb\},D)\geq 0\\
\label{E28}
\gamma_e(B,D)&=\gamma_e(B\cup\{\bb\},D)\leq \binom{d+2}{2}-\binom{d-e+2}{2}-1.
\end{align}
If $\gamma_e(B\cup\{\bb\},D)=\binom{d+2}{2}-\binom{d-e+2}{2}-1$, then (\ref{E28}) leads to
\begin{equation}
\label{E29}
\gamma_e(B,D)=\gamma_e(B\cup\{\bb\},D)=\binom{d+2}{2}-\binom{d-e+2}{2}-1,
\end{equation}
and hence
\begin{align*}
\tau_e(B\cup\{\bb\},D)&=\alpha_e(D)+ \beta_e(B\cup\{\bb\},D)+|B\cup\{\bb\}|+2&\Big(\text{by (\ref{E21}),(\ref{E27}),(\ref{E29})}\Big)\\
&\leq \alpha_e(D)+ \beta_e(B,D)+|B|+2&\Big(\text{by (\ref{E27})}\Big)\\
&=\tau_e(B,D).&\Big(\text{by (\ref{E21}),(\ref{E27}),(\ref{E29})}\Big)
\end{align*}
If $\gamma_e(B\cup\{\bb\},D)<\binom{d+2}{2}-\binom{d-e+2}{2}-1$, then (\ref{E28}) leads to
\begin{equation}
\label{E30}
\gamma_e(B,D)=\gamma_e(B\cup\{\bb\},D)<\binom{d+2}{2}-\binom{d-e+2}{2}-1,
\end{equation}
and hence
\begin{align*}
\tau_e(B\cup\{\bb\},D)&=\alpha_e(D)+ \beta_e(B\cup\{\bb\},D)+|B\cup\{\bb\}|+3&\Big(\text{by (\ref{E21}),(\ref{E27}),(\ref{E30})}\Big)\\
&\leq \alpha_e(D)+ \beta_e(B,D)+|B|+3&\Big(\text{by (\ref{E27})}\Big)\\
&=\tau_e(B,D).&\Big(\text{by (\ref{E21}),(\ref{E27}),(\ref{E30})}\Big)
\end{align*}
Thus, in any case, $\tau_e(B\cup\{\bb\},D)\leq \tau_e(B,D)$, and then (\ref{E22}) implies that
\begin{equation}
\label{E31}
\tau_e(B\cup\{\bb\},D)\leq \tau_e(B,D)<\binom{d+2}{2},
\end{equation}
The claim follows from (\ref{E24}) and (\ref{E31}).
 \item[$\bullet$] Assume that $\bb\in D$.    Since $D\setminus\{\bb\}\subseteq D$, we have that 
\begin{equation}
\label{E32}
V_e(D\setminus\{\bb\})\subseteq V_e(D), 
\end{equation} 
 and then 
 \begin{equation}
 \label{E33}
 \alpha_e(D\setminus\{\bb\})\geq \alpha_e(D).
 \end{equation}
   Now we claim that
  \begin{equation}
  \label{E40}
  (V_e(D)\setminus V_e(D\setminus\{\bb\}))\cap \psi_e(B)=\emptyset.
  \end{equation}
   Indeed, if (\ref{E40}) is false, then there is $\ab\in B$ such that $\psi_e(\ab)\in V_e(D)\setminus V_e(D\setminus\{\bb\})$. Since 
   \begin{equation*}
   \dim V_e(D)-\dim V_e(D\setminus\{\bb\})\leq |D|-|D\setminus\{\bb\}|=1, 
   \end{equation*}
   we have that the flat generated by $V_e(D\setminus\{\bb\})$ and $\psi_e(\ab)$ has to be $V_e(D)$, but this flat is precisely $\Fl(V_e(D\setminus\{\bb\})\cup\{\psi_e(\ab)\})=V_e((D\setminus\{\bb\})\cup\{\ab\})$ so 
   \begin{equation}
   \label{E41}
   V_e(D)=V_e((D\setminus\{\bb\})\cup\{\ab\}).
   \end{equation}
  From  (\ref{E41}), we can apply Remark \ref{R20}.iii to get that 
   \begin{equation}
  \label{E42}
   U_e(B,(D\setminus\{\bb\})\cup\{\ab\})\subseteq  U_e(B\cup\{\bb\},D).
  \end{equation}
  Since $(e,D)\in I(B\cup\{\bb\},C)$, we have that $C\nsubseteq U_e(B\cup\{\bb\},D)$. From (\ref{E42}), we get that $C\nsubseteq   U_e(B,(D\setminus\{\bb\})\cup\{\ab\})$ and hence $(e,(D\setminus\{\bb\})\cup\{\ab\})\in I(B,C)$. Thus, in so far as $\bb\not\in U$, we get  that $\bb\not\in  U_e(B,(D\setminus\{\bb\})\cup\{\ab\})$. From (\ref{E21}) and (\ref{E41}), note that $\alpha_e((D\setminus\{\bb\})\cup\{\ab\})\geq 0$ so $\psi_e^{-1}(V_e((D\setminus\{\bb\})\cup\{\ab\}))\subseteq  U_e(B,(D\setminus\{\bb\})\cup\{\ab\})$ and hence $\bb\not\in  \psi_e^{-1}(V_e((D\setminus\{\bb\})\cup\{\ab\}))$. This means that $\psi_e(\bb)\not\in V_e((D\setminus\{\bb\})\cup\{\ab\})$; however,  $\bb\in D$ so $\psi_e(\bb)\in V_e(D)$ but this contradicts (\ref{E41}) and proves (\ref{E40}). Now, from (\ref{E40}), 
  \begin{equation*}
   (\psi_e^{-1}(V_e(D))\setminus \psi_e^{-1}(V_e(D\setminus\{\bb\})))\cap B=\emptyset,
  \end{equation*}
and thus $B\setminus  \psi_e^{-1}(V_e(D\setminus\{\bb\}))=(B\cup\{\bb\})\setminus \psi_e^{-1}(V_e(D))$.  This gives
\begin{equation}
\label{E34}
W_e(B,D\setminus\{\bb\})=W_e(B\cup\{\bb\},D),
\end{equation}
 and therefore 
 \begin{equation}
 \label{E35}
 \beta_e(B,D\setminus\{\bb\})=\beta_e(B\cup\{\bb\},D).
 \end{equation}
 From (\ref{E21}), (\ref{E32}) and (\ref{E34}), we can apply Remark \ref{R20}.ii and we get that
  \begin{equation}
  \label{E36}
   U_e(B,D\setminus\{\bb\})\subseteq  U_e(B\cup\{\bb\},D).
  \end{equation}
  Since $(e,D)\in I(B\cup\{\bb\},C)$, we have that $C\nsubseteq U_e(B\cup\{\bb\},D)$. Thus, from (\ref{E36}), we obtain that $C\nsubseteq  U_e(B,D\setminus\{\bb\})$ and hence $(e,D\setminus \{\bb\})\in I(B,C)$. This implies by assumption that  
  \begin{equation}
\label{E37}
\max\left\{\tau_e(B,D\setminus\{\bb\}),\mu_e(B,D\setminus\{\bb\})\right\}<\binom{d+2}{2}.
\end{equation}
   Since  $(e,D\setminus \{\bb\})\in I(B,C)$ and $\bb\not\in U$, we get  that $\bb\not\in  U_e(B,D\setminus\{\bb\})$. From (\ref{E21}) and (\ref{E33}), we have that $\alpha_e(D\setminus\{\bb\})\geq 0$ so $\psi_e^{-1}(V_e(D\setminus\{\bb\}))\subseteq  U_e(B,D\setminus\{\bb\})$ and hence $\bb\not\in  \psi_e^{-1}(V_e(D\setminus\{\bb\}))$. This means that 
\begin{equation}
\label{E38}
\psi_e(\bb)\not\in V_e(D\setminus\{\bb\}),
\end{equation}  
  and thereby $V_e(D\setminus\{\bb\})$ is a flat properly contained in $V_e(D)$; in particular, 
  \begin{equation}
  \label{E39}
  \alpha_e(D\setminus\{\bb\})\geq \alpha_e(D)+1.
  \end{equation} 
  Then
  \begin{align}
  \label{E43}
  \gamma_e(B\cup\{\bb\},D)=&|V_e(D)\cap \psi_e(B\cup\{\bb\})|\nonumber\\
  =&| (V_e(D)\setminus V_e(D\setminus\{\bb\}))\cap \psi_e(B\cup\{\bb\})|\nonumber\\
  &+| V_e(D\setminus\{\bb\})\cap \psi_e(B\cup\{\bb\})|\nonumber\\
=&1+| V_e(D\setminus\{\bb\})\cap \psi_e(B\cup\{\bb\})|&\Big(\text{by (\ref{E40})}\Big)\nonumber\\
=&1+| V_e(D\setminus\{\bb\})\cap \psi_e(B)|&\Big(\text{by (\ref{E38})}\Big)\nonumber\\
=&1+\gamma_e(B,D\setminus\{\bb\}).
  \end{align}
  Then 
\begin{align}
\label{E44}
\mu_e(B\cup\{\bb\},D)&=\alpha_e(D)+ \gamma_e(B\cup\{\bb\},D)+\binom{d-e+2}{2}&\Big(\text{by (\ref{E21})}\Big)\nonumber\\
&\leq\alpha_e(D\setminus\{\bb\})+ \gamma_e(B,D\setminus\{\bb\})+\binom{d-e+2}{2}&\Big(\text{by (\ref{E39}),(\ref{E43})}\Big)\nonumber\\
&=\mu_e(B,D\setminus\{\bb\})&\Big(\text{by (\ref{E21}),(\ref{E39})}\Big)\nonumber\\
&<\binom{d+2}{2}.&\Big(\text{by (\ref{E37})}\Big)
\end{align}
If $\beta_{e}(B\cup\{\bb\},D)<0$ or $\gamma_e(B\cup\{\bb\},D)>\binom{d+2}{2}-\binom{d-e+2}{2}-1$, then $\tau_e(B\cup\{\bb\},D)=0$, and we are done by (\ref{E44}). Thus suppose that these inequalities are not true and then  (\ref{E35}) and (\ref{E43}) give
\begin{align}
\label{E45}
\beta_e(B,D\setminus\{\bb\})&= \beta_{e}(B\cup\{\bb\},D)\geq 0\\
\label{E46}
\gamma_e(B,D\setminus\{\bb\})+1&=\gamma_e(B\cup\{\bb\},D)\leq \binom{d+2}{2}-\binom{d-e+2}{2}-1.
\end{align}
If $\gamma_e(B\cup\{\bb\},D)=\binom{d+2}{2}-\binom{d-e+2}{2}-1$, then (\ref{E46}) leads to
\begin{equation}
\label{E47}
\gamma_e(B,D\setminus\{\bb\})+1=\gamma_e(B\cup\{\bb\},D)=\binom{d+2}{2}-\binom{d-e+2}{2}-1,
\end{equation}
and hence
\begin{align*}
\tau_e(B\cup\{\bb\},D)&=\alpha_e(D)+ \beta_e(B\cup\{\bb\},D)+|B\cup\{\bb\}|+2&\Big(\text{by (\ref{E21}),(\ref{E45}),(\ref{E47})}\Big)\\
&\leq \alpha_e(D\setminus \{\bb\})+ \beta_e(B,D\setminus \{\bb\})+|B|+2&\Big(\text{by (\ref{E39}),(\ref{E45})}\Big)\\
&\leq \tau_e(B,D\setminus\{\bb\}).&\Big(\text{by (\ref{E21}),(\ref{E45}),(\ref{E47})}\Big)
\end{align*}
If $\gamma_e(B\cup\{\bb\},D)<\binom{d+2}{2}-\binom{d-e+2}{2}-1$, then (\ref{E46}) leads to
\begin{equation}
\label{E48}
\gamma_e(B,D\setminus\{\bb\})+1=\gamma_e(B\cup\{\bb\},D)<\binom{d+2}{2}-\binom{d-e+2}{2}-1,
\end{equation}
and hence
\begin{align*}
\tau_e(B\cup\{\bb\},D)&=\alpha_e(D)+ \beta_e(B\cup\{\bb\},D)+|B\cup\{\bb\}|+3&\Big(\text{by (\ref{E21}),(\ref{E45}),(\ref{E48})}\Big)\\
&\leq \alpha_e(D\setminus \{\bb\})+ \beta_e(B,D\setminus \{\bb\})+|B|+3&\Big(\text{by (\ref{E39}),(\ref{E45})}\Big)\\
&= \tau_e(B,D\setminus\{\bb\}).&\Big(\text{by (\ref{E21}),(\ref{E45}),(\ref{E48})}\Big)
\end{align*}
In any case, $\tau_e(B\cup\{\bb\},D)\leq \tau_e(B,D\setminus\{\bb\})$, and then (\ref{E37}) leads to
\begin{equation}
\label{E49}
\tau_e(B\cup\{\bb\},D)\leq \tau_e(B,D\setminus\{\bb\})<\binom{d+2}{2}.
\end{equation}
The claim follows from (\ref{E44}) and (\ref{E49}).\qedhere
  \end{enumerate}
   \end{proof}
    \begin{lem}
   \label{R23}
     Let $d\in\Nat$, $A,C_0 \in\Pc\left(\Real^2\right)$ and $B_0\in\Pc(A\setminus C_0)$. Take  $B\in \Pc_{\binom{d+2}{2}-3}(A)$ which satisfies the following properties.
     \begin{enumerate}
     \item[i)]$\dim \psi_d(B)=\binom{d+2}{2}-4$.
     \item[ii)] $B\supseteq B_0$.
     \item[iii)]$B\cap C_0=B\setminus B_0$.
     \item[iv)]For all $(e,D)\in I(B,C_0)$, we have that $\max\{\tau_e(B,D),\mu_e(B,D)\}<\binom{d+2}{2}$.
     \item[v)]For all $(e,D)\in ([1,d-1]\times \Pc(B))\setminus I(B,C_0)$ with $\alpha_e(D)\geq 0$, we have that  $\gamma_e(B,D)<\binom{d+2}{2}-\binom{d-e+2}{2}$ and   $\beta_e(B,D)<0$.
     \end{enumerate}
     Then $B\in\Nc_d(A,B_0,C_0)$. 
   \end{lem}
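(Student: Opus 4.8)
The plan is to reduce the statement to verifying that $B\in\Nc_d(A)$, since the two remaining requirements in the definition of $\Nc_d(A,B_0,C_0)$---namely $B_0\subseteq B$ and $B\cap C_0=B\setminus B_0$---are exactly hypotheses ii) and iii), while $B\in\Pc_{\binom{d+2}{2}-3}(A)$ is part of the setup. Property i) in the definition of $\Nc_d(A)$ is hypothesis i), so it remains to establish properties ii), iii) and iv). To this end I fix $e\in[1,d-1]$, a curve $C\in\Cc_e$, and set $D:=B\cap C\in\Pc(B)$; I will translate the three properties, which speak about $|B\cap C|$ and $\dim\psi_{d-e}(B\setminus C)$, into statements about $\alpha_e(D)$, $\beta_e(B,D)$ and $\gamma_e(B,D)$, and then read them off from hypotheses iv) and v).

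The linchpin is an exact identification. Writing $C=\Zc(p_C)$ with $\deg p_C=e$, Remark \ref{R12}.ii produces a hyperplane $H$ of $\Real^{\binom{e+2}{2}-1}$ with $\psi_e^{-1}(H)=C$; since $\psi_e(D)\subseteq H$, the flat $V_e(D)=\Fl(\psi_e(D))$ lies in the proper flat $H$, so $\alpha_e(D)\geq0$. Moreover, if $b\in B$ satisfies $\psi_e(b)\in V_e(D)\subseteq H$, then $b\in\psi_e^{-1}(H)=C$, whence $b\in B\cap C=D$; therefore $V_e(D)\cap\psi_e(B)=\psi_e(D)$, which gives $\gamma_e(B,D)=|B\cap C|$ and, crucially, $B\setminus\psi_e^{-1}(V_e(D))=B\setminus C$. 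Consequently $W_e(B,D)=\Fl(\psi_{d-e}(B\setminus C))$ and
\begin{equation*}
\dim\psi_{d-e}(B\setminus C)=\binom{d-e+2}{2}-3-\beta_e(B,D).
\end{equation*}
These are exact equalities rather than mere inequalities, which is what lets the $\alpha,\beta,\gamma$-bookkeeping match properties ii)--iv) on the nose.

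For property ii) I must show $\gamma_e(B,D)<\binom{d+2}{2}-\binom{d-e+2}{2}$. If $(e,D)\in I(B,C_0)$, hypothesis iv) gives $\mu_e(B,D)<\binom{d+2}{2}$; since $\alpha_e(D)\geq0$ this reads $\alpha_e(D)+\gamma_e(B,D)+\binom{d-e+2}{2}<\binom{d+2}{2}$, and discarding the nonnegative $\alpha_e(D)$ yields the bound. If $(e,D)\notin I(B,C_0)$, then as $\alpha_e(D)\geq0$ hypothesis v) gives it directly. For properties iii) and iv) I control the sign of $\beta_e(B,D)$, and here the cardinality $|B|=\binom{d+2}{2}-3$ is decisive. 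In property iii) one has $|B\setminus C|=\binom{d-e+2}{2}-2$, forcing $\dim\psi_{d-e}(B\setminus C)\leq\binom{d-e+2}{2}-3$, i.e. $\beta_e(B,D)\geq0$; this excludes the case $(e,D)\notin I(B,C_0)$ (which by v) would force $\beta_e(B,D)<0$), so $(e,D)\in I(B,C_0)$, and feeding $\alpha_e(D),\beta_e(B,D)\geq0$ together with $\gamma_e(B,D)=\binom{d+2}{2}-\binom{d-e+2}{2}-1$ into the relevant branch of $\tau_e$ gives $\tau_e(B,D)=\alpha_e(D)+\beta_e(B,D)+|B|+2<\binom{d+2}{2}$, which forces $\beta_e(B,D)=0$, the desired equality. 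In property iv) one has $\gamma_e(B,D)=|B\cap C|<\binom{d+2}{2}-\binom{d-e+2}{2}-1$; if $(e,D)\notin I(B,C_0)$ then v) gives $\beta_e(B,D)<0$ outright, while if $(e,D)\in I(B,C_0)$, assuming $\beta_e(B,D)\geq0$ would place us in the branch $\tau_e(B,D)=\alpha_e(D)+\beta_e(B,D)+|B|+3$, and $\tau_e(B,D)<\binom{d+2}{2}$ would give $\alpha_e(D)+\beta_e(B,D)<0$, a contradiction; hence $\beta_e(B,D)<0$ and $\dim\psi_{d-e}(B\setminus C)>\binom{d-e+2}{2}-3$.

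The main obstacle is the exact identification in the second paragraph: everything downstream is arithmetic with the definitions, but it only closes because $V_e(D)\cap\psi_e(B)=\psi_e(B\cap C)$ \emph{exactly}, so that $\dim\psi_{d-e}(B\setminus C)$ is pinned to $\beta_e(B,D)$ with no slack. The second delicate point is the boundary case of property iii): one must exploit the raw cardinality $|B\setminus C|=\binom{d-e+2}{2}-2$ both to force $\beta_e(B,D)\geq0$ and to exclude $(e,D)\notin I(B,C_0)$, after which the tailored definition of $\tau_e$ squeezes $\beta_e(B,D)$ down to exactly $0$.
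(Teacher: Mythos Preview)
Your proof is correct and follows essentially the same approach as the paper's own proof: both reduce to verifying conditions ii)--iv) of $\Nc_d(A)$, set $D=B\cap C=B\cap\psi_e^{-1}(H)$ for a given $C\in\Cc_e$, establish the exact identifications $\gamma_e(B,D)=|B\cap C|$ and $\dim W_e(B,D)=\dim\psi_{d-e}(B\setminus C)$ via $V_e(D)\subseteq H$, and then read off each property from the appropriate branch of $\mu_e$ or $\tau_e$ using hypotheses iv) and v). The only difference is organizational: you frontload the identification once, whereas the paper repeats the setup separately for each of the three properties.
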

   \begin{proof}
Because of the assumptions  i), ii) and iii), it suffices to show that $B$ satisfies the conditions ii), iii) and iv) of the definition of $\Nc_d(A)$.  First we show that for all $e\in[1,d-1]$ and $C\in \Cc_e$, we have that
  \begin{equation}
  \label{E50}
  |C\cap B|<\binom{d+2}{2}-\binom{d-e+2}{2}.
  \end{equation}
  Fix $p(x,y)\in\Real_e[x,y]$ such that $C=\Zc(p(x,y))$, and write $H:=\tau_e([p(x,y)])$ and $D:=B\cap \psi_e^{-1}(H)$. Hence $V_e(D)\subseteq H$ and   Remark \ref{R12}.ii implies that $C=\psi_e^{-1}(H)$  so
  \begin{equation}
  \label{E51}
  |C\cap B|=|H\cap \psi_e(B)|=|V_e(D)\cap \psi_e(B)|=\gamma_e(B,D).
  \end{equation}
  In so far as $\dim V_e(D)\leq \dim H=\binom{e+2}{2}-2$, we get that $\alpha_e(D)\geq 0$, and therefore 
  \begin{equation}
  \label{E52}
 \mu_e(B,D)=\alpha_e(D)+ \gamma_e(B,D)+\binom{d-e+2}{2}.
  \end{equation}
If $(e,D)\in I(B,C_0)$, then  $\mu_e(B,D)<\binom{d+2}{2}$ by iv). Insomuch as $\alpha_e(D)\geq 0$, we obtain from (\ref{E52}) that $ \gamma_e(B,D)<\binom{d+2}{2}-\binom{d-e+2}{2}$. If $(e,D) \not\in I(B,C_0)$, then  $\gamma_e(B,D)<\binom{d+2}{2}-\binom{d-e+2}{2}$ by v). Hence, in any case, 
  \begin{equation}
  \label{E53}
  \gamma_e(B,D)<\binom{d+2}{2}-\binom{d-e+2}{2},
  \end{equation}
   and thus (\ref{E50}) follows from (\ref{E51}) and (\ref{E53}).
   
   Now take $e\in[1,d-1]$  and $C\in \Cc_e$ such that 
\begin{equation}
\label{E54}
|C\cap B|=\binom{d+2}{2}-\binom{d-e+2}{2}-1,
\end{equation}   
   and we will show that 
   \begin{equation}
   \label{E55}
   \dim \psi_{d-e}(B\setminus C)= \binom{d-e+2}{2}-3.
   \end{equation}
   Since 
   \begin{equation*}
  |B\setminus C|=|B|-|C\cap B|=\binom{d-e+2}{2}-2, 
   \end{equation*}
   we have that 
   \begin{equation}
   \label{E56}
      \dim \psi_{d-e}(B\setminus C)\leq   |B\setminus C|-1=\binom{d-e+2}{2}-3.
   \end{equation}
       Fix $p(x,y)\in\Real_e[x,y]$ such that $C=\Zc(p(x,y))$, and write $H:=\tau_e([p(x,y)])$ and $D:=B\cap \psi_e^{-1}(H)$. Hence $V_e(D)\subseteq H$ and   Remark \ref{R12}.ii implies that $ \psi_e(C)= H\cap \psi_e(\Real^2)$  so
  \begin{equation*}
   |C\cap B|=|\psi_e(C)\cap \psi_e(B)|=|H\cap \psi_e(B)|=|V_e(D)\cap \psi_e(B)|=\gamma_e(B,D), 
  \end{equation*}
   and then (\ref{E54}) implies that
   \begin{equation}
   \label{E57}
   \gamma_e(B,D)=\binom{d+2}{2}-\binom{d-e+2}{2}-1.
   \end{equation}
    In so far as $\dim V_e(D)\leq \dim H=\binom{e+2}{2}-2$, we get that
    \begin{equation}
    \label{E58}
    \alpha_e(D)\geq 0.
    \end{equation}
    Since $D=B\cap \psi_e^{-1}(H)=B\cap C$, we have that $B\setminus \psi^{-1}_e(V_e(D))=B\setminus C$ so
    \begin{equation*}
    W_e(B,D)=\Fl(\psi_{d-e}(B\setminus \psi^{-1}_e(V_e(D)))=\Fl(\psi_{d-e}(B\setminus C)),
    \end{equation*} 
   and then
    \begin{equation}
    \label{E59}
    \dim W_e(B,D)=\dim \psi_{d-e}(B\setminus C).
    \end{equation}
    From (\ref{E56}) and (\ref{E59}),
    \begin{equation}
    \label{E60}
    \beta_e(B,D)\geq 0.
    \end{equation}
    From (\ref{E57}), (\ref{E58}) and (\ref{E60}), 
    \begin{equation}
    \label{E61}
    \tau_e(B,D)=\alpha_e(D)+ \beta_e(B,D)+|B|+2.
    \end{equation}
  From (\ref{E58}) and (\ref{E60}), $\min\{\alpha_e(D),\beta_e(B,D)\}\geq 0$; then  v) implies  that $(e,D)\in I(B,C_0)$. Thus iv) implies that $\tau_e(B,D)<\binom{d+2}{2}$, and then  (\ref{E58}) and (\ref{E61}) gives $\beta_e(B,D)<1$. Now, from (\ref{E60}), $\beta_e(B,D)=0$ and then (\ref{E59}) yields (\ref{E55}).
    
   Now take $e\in[1,d-1]$  and $C\in \Cc_e$ such that 
\begin{equation}
\label{E62}
|C\cap B|<\binom{d+2}{2}-\binom{d-e+2}{2}-1,
\end{equation}   
   and we will prove that 
   \begin{equation}
   \label{E63}
   \dim \psi_{d-e}(B\setminus C)>\binom{d-e+2}{2}-3.
   \end{equation}
  Fix $p(x,y)\in\Real_e[x,y]$ such that $C=\Zc(p(x,y))$, and write $H:=\tau_e([p(x,y)])$ and $D:=B\cap \psi_e^{-1}(H)$. Hence $V_e(D)\subseteq H$ and   Remark \ref{R12}.ii implies that $ \psi_e(C)= H\cap \psi_e(\Real^2)$  so
  \begin{equation*}
   |C\cap B|=|\psi_e(C)\cap \psi_e(B)|=|H\cap \psi_e(B)|=|V_e(D)\cap \psi_e(B)|=\gamma_e(B,D), 
  \end{equation*}
   and thus (\ref{E62}) gives
   \begin{equation}
   \label{E64}
   \gamma_e(B,D)<\binom{d+2}{2}-\binom{d-e+2}{2}-1.
   \end{equation}
   Since  $\dim V_e(D)\leq \dim H=\binom{e+2}{2}-2$, we get that
    \begin{equation}
    \label{E65}
    \alpha_e(D)\geq 0.
    \end{equation}
    Inasmuch as $D=B\cap \psi_e^{-1}(H)=B\cap C$, notice that $B\setminus \psi_e^{-1}(V_e(D))=B\setminus C$ so
    \begin{equation*}
    W_e(B,D)=\Fl(\psi_{d-e}(B\setminus \psi^{-1}_e(V_e(D)))=\Fl(\psi_{d-e}(B\setminus C)),
    \end{equation*} 
    and thus
    \begin{equation}
    \label{E66}
    \dim W_e(B,D)=\dim \psi_{d-e}(B\setminus C).
    \end{equation}
         We claim that 
    \begin{equation}
    \label{E67}
    \beta_e(B,D)<0.
    \end{equation}
   Indeed, if (\ref{E67}) is false,  then $\beta_e(B,D)\geq 0$  and therefore $\min\{\alpha_e(D),\beta_e(B,D)\}\geq 0$ by (\ref{E65}). Hence v) would imply that $(e,D)\in I(B,C_0)$ and thereby $\tau_e(B,D)<\binom{d+2}{2}$ by iv). However, (\ref{E64}), (\ref{E65}) and $\beta_e(B,D)\geq 0$ give
    \begin{equation*}
        \tau_e(B,D)=\alpha_e(D)+ \beta_e(B,D)+|B|+3\geq \binom{d+2}{2},
    \end{equation*}
     which is impossible. This means that (\ref{E67}) is true, and thereby (\ref{E63}) is a consequence of (\ref{E66}) and (\ref{E67}).
         \end{proof}
         Recall that $A\in\Pc(\Real^2)$ is $d$-regular if $|A\cap C|< \frac{1}{2^{2^{3d+8}}}|A|$ for all $C\in \Cc_{\leq d}$.
         \begin{lem}
      \label{R24}
      Let $d\in \Zet$  with $d>1$ and $A$ be $d$-regular. Then
      \begin{equation*}
      |\Nc_d(A,\emptyset,\Real^2)|\geq \frac{1}{2^{d+3}!}|A|^{\binom{d+2}{2}-3}.
      \end{equation*}
      \end{lem}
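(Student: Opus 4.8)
The plan is to produce the elements of $\Nc_d(A,\emptyset,\Real^2)$ by a greedy procedure that adds one point at a time, and then to count the sets obtained. Put $N:=\binom{d+2}{2}-3$. I would count ordered tuples $(\bb_1,\ldots,\bb_N)$ of distinct points of $A$ for which $B:=\{\bb_1,\ldots,\bb_N\}$ lies in $\Nc_d(A,\emptyset,\Real^2)$, and divide by $N!$ at the end. Writing $B_0:=\emptyset$ and $B_{i+1}:=B_i\cup\{\bb_{i+1}\}$, the rule at each stage is that $\bb_{i+1}$ must avoid the forbidden set
\begin{equation*}
U_i:=\bigcup_{(e,D)\in I(B_i,\Real^2)}U_e(B_i,D).
\end{equation*}
First I would note that $d$-regularity forces $|A|>2^{2^{3d+8}}$ (a single point already lies on a line), so $|A|$ is enormous compared with every quantity depending only on $d$.

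The heart of the argument is the invariant $\max\{\tau_e(B_i,D),\mu_e(B_i,D)\}<\binom{d+2}{2}$ for all $(e,D)\in I(B_i,\Real^2)$, which I would establish by induction on $i$. For the base case $B_0=\emptyset$ a direct computation gives $\mu_e(\emptyset,\emptyset)=\binom{e+2}{2}+\binom{d-e+2}{2}-1$ and $\tau_e(\emptyset,\emptyset)=\binom{e+2}{2}+\binom{d-e+2}{2}$; the maxima over $e\in[1,d-1]$ are attained at $e=1$, and while $\mu$ stays below $\binom{d+2}{2}$ already for $d\geq2$, the quantity $\tau$ lies strictly below $\binom{d+2}{2}$ exactly when $d\geq3$ (for $d=2$ it equals $\binom{d+2}{2}$, so I would dispatch $d=2$ separately, where $\Nc_2(A,\emptyset,\Real^2)$ is just the family of non-collinear triples and the count is immediate). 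The inductive step is precisely Lemma \ref{R22}: since $\bb_{i+1}\in A\setminus U_i$ and the invariant holds for $B_i$, it holds for $B_{i+1}$. Once the chain reaches length $N$, membership $B_N\in\Nc_d(A,\emptyset,\Real^2)$ follows from Lemma \ref{R23} applied with $B_0=\emptyset$ and $C_0=\Real^2$: hypotheses ii and iii are trivial, hypothesis v is vacuous because $I(B_N,\Real^2)$ equals the whole index set $[1,d-1]\times\Pc(B_N)$, hypothesis iv is the invariant, and hypothesis i holds because each $U_i$ contains $\psi_d^{-1}(V_d(B_i))$, so avoiding $U_i$ forces $\psi_d(\bb_{i+1})\notin V_d(B_i)$ and hence $\dim\psi_d(B_N)=N-1=\binom{d+2}{2}-4$.

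It then remains to count. By Lemma \ref{R21}.iv every $U_e(B_i,D)$ is contained in a union of three curves of degree at most $d$, so $U_i$ lies in a union of at most $3(d-1)2^{N}$ members of $\Cc_{\leq d}$; $d$-regularity gives $|U_i\cap A|\leq 3(d-1)2^{N}\,2^{-2^{3d+8}}|A|$, a negligible fraction of $|A|$ since $2^{3d+8}$ dwarfs the $O(d^2)$-sized quantity $\log_2\!\big(3(d-1)2^{N}\big)$. Hence at each of the $N$ stages there are at least $|A|-N-|U_i\cap A|\geq\tfrac12|A|$ admissible choices of $\bb_{i+1}$, producing at least $(\tfrac12|A|)^{N}$ valid ordered tuples and therefore at least $\frac{1}{2^{N}N!}|A|^{N}$ sets. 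Finally $2^{N}N!\leq(2N)!\leq(2^{d+3})!$ because $2N=d^2+3d-4\leq 2^{d+3}$ for $d\geq2$, which yields the stated bound.

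I expect the main obstacle here to be organizational rather than conceptual, since Lemmas \ref{R21}, \ref{R22} and \ref{R23} already carry the structural weight. The two genuinely delicate points are verifying the base of the induction (the computation at $\emptyset$, which is exactly where the threshold $d\geq3$ surfaces) and confirming, uniformly over all $N$ stages, that the forbidden set $U_i$ — a union of exponentially-many-in-$d$ curves — still meets $A$ in far fewer than $\tfrac12|A|$ points. This last point is precisely what the doubly-exponential constant $2^{-2^{3d+8}}$ in the definition of $d$-regularity is engineered to guarantee.
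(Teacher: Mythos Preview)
Your proposal is correct and follows essentially the same route as the paper: a greedy chain $B_0\subsetneq\cdots\subsetneq B_N$ with Lemma~\ref{R22} supplying the inductive step, Lemma~\ref{R23} certifying membership in $\Nc_d(A,\emptyset,\Real^2)$, and Lemma~\ref{R21}.iv together with $d$-regularity controlling $|U_i\cap A|$. The differences are purely organizational: you anchor the induction at $B_0=\emptyset$ rather than at $B_1=\{\bb_1\}$, you count by a stage-wise product $(\tfrac12|A|)^N$ rather than by subtracting the bad sets $R_i$ from $A^N$, and you split off $d=2$ explicitly.

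Your treatment of the base case is in fact slightly sharper than the paper's. The paper asserts $\max\{\tau_e(B_1,D),\mu_e(B_1,D)\}\leq\binom{e+2}{2}+\binom{d-e+2}{2}-1$, but for both $D=\emptyset$ and $D=\{\bb_1\}$ a direct computation gives $\tau_e=\binom{e+2}{2}+\binom{d-e+2}{2}$ (one larger), so the strict inequality $<\binom{d+2}{2}$ only follows for $d\geq 3$ --- exactly the threshold you flagged. Your separate handling of $d=2$ (where $\Nc_2(A)$ is indeed just the family of non-collinear triples, conditions i), iii), iv) being automatic for three distinct points) is therefore not merely a convenience but genuinely needed.
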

      \begin{proof}
      Set $g:=\binom{d+2}{2}-3$. Throughout this proof,  for any $\obb\in A^g$, its $i$-th entry will be denoted by $\bb_i$, i.e. $\obb=(\bb_1,\bb_2,\ldots, \bb_g)$. For any $\obb\in A^g$, write  $B_0(\obb):=\emptyset$. Now,  for all $i\in[1,g]$ and $\obb\in A^g$, set
      \begin{align*}
      B_i(\obb)&:=\{\bb_1,\bb_2,\ldots,\bb_i\}\\
      U_i(\obb)&:=\bigcup_{(e,D)\in I(B_{i-1}(\obb),\Real^2)}U_e(B_{i-1}(\obb),D).
      \end{align*}
      For all $i\in[1,g]$ and $\obb\in A^g$, notice that $| B_i(\obb)|\leq g<\binom{d+2}{2}-1$. Then Lemma \ref{R21}.iv implies that for all $e\in [1,d-1]$ and $D\in \Pc\left(B_{i}(\obb)\right)$, we get that $U_e\left(B_{i}(\obb),D\right)$ is contained in the union of $3$ curves in $\Cc_{\leq d}$; in particular, $U_e\left(B_{i}(\obb),D\right)$ cannot contain $\Real^2$, and therefore
      \begin{equation}
      \label{E68}
       I\left(B_{i}(\obb),\Real^2\right)=[1,d-1]\times \Pc\left(B_{i}(\obb)\right).
      \end{equation}
       For each $i\in[1,g]$, write $R_i:=\left\{\obb\in A^g:\; \bb_i\in U_i(\obb)\right\}$. The first step in the proof is to show that for all $i\in[1,g]$,
      \begin{equation}
      \label{E69}
      |R_i|\leq \frac{1}{2g}|A|^g.
      \end{equation}
      For each $i\in[1,g], E\in \Pc([1,i])$ and $\obb\in A^g$, set $E(\obb):=\{\bb_j:\;j\in E\}$. Notice that $\Pc\left(B_{i}(\obb)\right)=\{E(\obb):\;E\in \Pc([1,i])\}$. Now, for each $i\in[1,g], E\in \Pc([1,i])$ and $e\in[1,d-1]$, write $R_{i,e,E}:=\left\{\obb\in A^g:\; \bb_i\in U_e(B_{i-1}(\obb),E(\obb))\right\}$. From (\ref{E68}), notice that for all $i\in [1,g]$, we get that $R_i=\bigcup_{E\in \Pc([1,i])}\bigcup_{e=1}^{d-1}R_{i,e,E}$ and then 
      \begin{equation}
      \label{E70}
       |R_i|\leq \sum_{E\in \Pc([1,i])}\sum_{e=1}^{d-1}|R_{i,e,E}|.
      \end{equation}
       For each $i\in[1,g], E\in \Pc([1,i])$ and $\obb\in A^g$, there exist $C_1,C_2,C_3\in\Cc_{\leq d}$ such that $U_e(B_{i-1}(\obb),E(\obb))\subseteq C_1\cup C_2\cup C_3$ by Lemma \ref{R21}.iv; then 
\begin{equation*}
A\cap U_e(B_{i-1}(\obb),E(\obb))\subseteq A\cap (C_1\cup C_2\cup C_3),
\end{equation*}       
      and,  since $A$ is $d$-regular, 
       \begin{equation}
       \label{E71}
       |A\cap U_e(B_{i-1}(\obb),E(\obb))|\leq \sum_{j=1}^3|A\cap C_i|<\frac{3}{2^{2^{3d+8}}}|A|\leq \frac{1}{2^{2^{3d+6}}}|A|. 
              \end{equation}
             Furthermore, in so far as $g=\binom{d+2}{2}-3<2^{d+2}$, we have that $dg2^{g+1}\leq 2^{2^{3d+6}}$. Hence, inasmuch as the $i$-th entry of $\obb\in R_{i,e,E}$ has to be in $A\cap U_e(B_{i-1}(\obb),E(\obb))$, (\ref{E71}) leads to   
              \begin{equation}
              \label{E72}
              |R_{i,e,E}|\leq |A|^{g-1}|A\cap U_e(B_{i-1}(\obb),E(\obb))| \leq  \frac{1}{2^{2^{3d+6}}}|A|^g\leq  \frac{1}{gd2^{g+1}}|A|^g. 
              \end{equation}
             Since $|\Pc([1,i])\times [1,d-1]|\leq d2^g$, we have that (\ref{E70}) and (\ref{E72}) imply (\ref{E69}).
             
             Set $T:=\left\{\obb\in A^g:\; \bb_i\not\in U_i(\obb) \text{ for all }i\in[1,g]\right\}$ so $T=A^g\setminus \bigcup_{i=1}^gR_i$. From (\ref{E69}), 
             \begin{equation}
             \label{E73}
             |T|=\left|A^g\setminus \bigcup_{i=1}^gR_i\right|\geq |A|^g-\sum_{i=1}^g|R_i|\geq \frac{1}{2}|A|^g.
             \end{equation}
                The next step is to show that the map
              \begin{equation*}
              \phi:T\longrightarrow \Pc(A),\qquad \phi(\obb)=B_g(\obb)
              \end{equation*}
               satisfies that
      \begin{equation}
      \label{E74}
      \phi(T)\subseteq \Nc_d(A, \emptyset,\Real^2).
\end{equation} 
We claim that for all $\obb\in T$, $i\in[1,g]$ and $(e,D)\in [1,d-1]\times \Pc\left(B_{i}(\obb)\right)$, we get that 
\begin{equation}
\label{E75}
\max\left\{\tau_e(B_i(\obb),D),\mu_e(B_i(\obb),D)\right\}<\binom{d+2}{2}.
\end{equation}    
We show (\ref{E75}) by induction on $i\in[1,g]$. For $i=1$, notice  that $B_1(\obb)=\{\bb_1\}$   and then $\alpha_e(D)\leq \binom{e+2}{2}-2, \beta_e(B_1(\obb),D)\leq \binom{d-e+2}{2}-3$  and $\gamma_e(B_1(\obb),D)\leq 1$ so
\begin{equation*}
\max\left\{\tau_e(B_1(\obb),D),\mu_e(B_1(\obb),D)\right\}\leq\binom{e+2}{2}+\binom{d-e+2}{2}-1<\binom{d+2}{2}, 
\end{equation*}
and then (\ref{E75}) follows in this case. Assume that (\ref{E75}) holds for $i-1\geq 1$ and we show it for $i$. Since $\obb\in T$, note that $\bb_i\in A\setminus U_i(\obb)$. By induction,
\begin{equation*}
\max\left\{\tau_e(B_{i-1}(\obb),D),\mu_e(B_{i-1}(\obb),D)\right\}<\binom{d+2}{2}
\end{equation*}
for all  $(e,D)\in [1,d-1]\times \Pc\left(B_{i-1}(\obb)\right)=  I\left(B_{i-1}(\obb),\Real^2\right)$ so $B_{i-1}(\obb)$ and $\bb_i\in A\setminus U_i(\obb)$ satisfy the assumptions of Lemma \ref{R22}, and hence this lemma implies that $B_{i-1}(\obb)\cup\{\bb_i\}=B_{i}(\obb)$ satisfies (\ref{E75}) completing the induction. In particular, (\ref{E75}) holds for $i=g$ so 
\begin{equation}
\label{E76}
\max\left\{\tau_e(B_g(\obb),D),\mu_e(B_g(\obb),D)\right\}<\binom{d+2}{2}
\end{equation}
 for all $\obb\in T$ and $(e,D)\in [1,d-1]\times \Pc\left(B_{g}(\obb)\right)$.  For all $\obb\in T$, we get that $\bb_i\not\in U_i(\obb)$; in particular, $\bb_i\not\in\psi^{-1}_d(V_d(B_{i-1}(\obb)))$. This means that the flats $V_d(B_1(\obb))\subsetneq V_d(B_2(\obb))\subsetneq\ldots \subsetneq V_d(B_g(\obb))$ are contained properly so 
 \begin{equation}
 \label{E77}
\dim V_d(B_g(\obb))\geq g-1.
 \end{equation}
 On the other hand, 
 \begin{equation*}
 \dim V_d(B_g(\obb))=\dim \Fl(\psi_d(B_g(\obb)))\leq |\psi_d(B_g(\obb))|-1=|B_g(\obb)|-1=g-1
 \end{equation*}
 so (\ref{E77}) leads to 
  \begin{equation}
 \label{E78}
 \dim \psi_d(B_g(\obb))=\dim V_d(B_g(\obb))=g-1=\binom{d+2}{2}-4.
 \end{equation}
 We want to apply Lemma \ref{R23} with $B=B_g(\obb)$, $B_0=\emptyset$ and $C_0=\Real^2$, and this can be done because its assumptions are satisfied (indeed: i) holds by  (\ref{E78}); ii) and iii) are trivial; iv) holds by (\ref{E68}) and (\ref{E76}); v) never happens by (\ref{E68})).  Hence Lemma \ref{R23} implies that $B_g(\obb)\in\Nc_d(A, \emptyset,\Real^2)$ and it proves (\ref{E74}).
 
 To conclude the proof, note that for each $B\in\phi(T)$, we have that $\phi^{-1}(B)\subseteq \{(\bb_1,\ldots,\bb_g)\in A^g:\;\{\bb_1,\ldots,\bb_g\}=B\}$. Therefore, for all $B\in \Nc_d(A, \emptyset,\Real^2)$, we have that $|\phi^{-1}(B)|\leq g!$,  and then (\ref{E73}) and (\ref{E74}) yield that
 \begin{equation}
 \label{E79}
 |\Nc_d(A,\emptyset,\Real^2)|\geq \frac{1}{g!}|T|\geq  \frac{1}{g!2}|A|^g.
\end{equation} 
In so far as $g=\binom{d+2}{2}-3<2^{d+2}$, we have that $g!2\leq 2^{d+3}!$  and then (\ref{E79}) implies the claim of the lemma. 
      \end{proof}
       \begin{lem}
      \label{R25}
Let $d,f\in\Zet$ be such that $d\geq 3$ and $f\in[0,d-1]$, and $B\in\Pc_{\binom{f+2}{2}}\left(\Real^2\right)$ be such that  $B$ is not contained in an element of $\Cc_{\leq f}$. Then, for all $e\in[1,d-1]$ and $D\in\Pc(B)$, 
 \begin{equation*}
 \max\{\tau_e(B,D),\mu_e(B,D)\}<\binom{d+2}{2}.
 \end{equation*}
  \end{lem}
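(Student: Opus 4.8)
The plan is to treat the two quantities separately after fixing convenient notation. Write $B_1:=B\cap\psi_e^{-1}(V_e(D))$ and $B_2:=B\setminus B_1$. Then $V_e(D)=\Fl(\psi_e(B_1))$ and $\gamma_e(B,D)=|B_1|$, while $W_e(B,D)=\Fl(\psi_{d-e}(B_2))\supseteq\psi_{d-e}(B_2)$, so that $|B_2|\leq|\psi_{d-e}(B)\cap W_e(B,D)|$ and $|B|=|B_1|+|B_2|=\binom{f+2}{2}$. The two recurring facts are the identity $\binom{d+2}{2}-\binom{e+2}{2}-\binom{d-e+2}{2}=e(d-e)-1$ and the observation that $h(x):=\binom{x+2}{2}-\binom{x-e+2}{2}$ has constant first difference $e$, hence is strictly increasing; since $e,f\leq d-1$ this gives $h(\max\{e,f\})<h(d)$. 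Every application of Lemma \ref{R15} below is to $B$ (which has $\binom{f+2}{2}$ points and is not contained in $\Cc_{\leq f}$) against a proper flat.

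For $\mu_e(B,D)$ I may assume $\alpha_e(D)\geq 0$, so that $V_e(D)$ is proper. Applying Lemma \ref{R15}.i when $e\geq f$, and Lemma \ref{R15}.ii when $e<f$, to the pair $(B,V_e(D))$, and substituting $\dim V_e(D)=\binom{e+2}{2}-2-\alpha_e(D)$, the term $\dim V_e(D)$ cancels and in both cases one obtains the uniform estimate $\alpha_e(D)+\gamma_e(B,D)\leq h(\max\{e,f\})$. Therefore $\mu_e(B,D)=\alpha_e(D)+\gamma_e(B,D)+\binom{d-e+2}{2}\leq h(\max\{e,f\})+\binom{d-e+2}{2}<h(d)+\binom{d-e+2}{2}=\binom{d+2}{2}$, since $h(d)+\binom{d-e+2}{2}=\binom{d+2}{2}$.

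For $\tau_e(B,D)$ I may assume $\alpha_e(D),\beta_e(B,D)\geq 0$ and $\gamma_e(B,D)\leq N:=\binom{d+2}{2}-\binom{d-e+2}{2}-1$, as otherwise $\tau_e(B,D)=0$. By the definition of $\tau_e$ it then suffices to bound $\alpha_e(D)+\beta_e(B,D)+|B|+c$, where $c=2$ if $\gamma_e(B,D)=N$ and $c=3$ if $\gamma_e(B,D)<N$. Setting $p:=|B_1|-\dim\psi_e(B_1)$ and $q:=|B_2|-\dim\psi_{d-e}(B_2)$, a short computation using the identity shows the desired bound is equivalent to $p+q\leq e(d-e)+3-c$. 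Now Lemma \ref{R15} applied to $(B,V_e(D))$ bounds $\alpha_e(D)$ from above by an expression linear in $\gamma_e(B,D)$, and Lemma \ref{R15} applied to $(B,W_e(B,D))$ (proper because $\beta_e(B,D)\geq 0$) together with $|B_2|\leq|\psi_{d-e}(B)\cap W_e(B,D)|$ bounds $\beta_e(B,D)$ from above by an expression linear in $|B_2|=\binom{f+2}{2}-\gamma_e(B,D)$, i.e.\ linear in $\gamma_e(B,D)$ with the opposite sign. When at least one of $e\geq f$, $d-e\geq f$ holds, adding these two estimates cancels $\gamma_e(B,D)$ and, after inserting $|B|=\binom{f+2}{2}$ and invoking $h(f)<h(d)$, yields $\tau_e(B,D)<\binom{d+2}{2}$; the single boundary case $e=1$, $f=d-1$ in the branch $c=3$ is tight, and is closed using that $\alpha_e(D)\geq 0$ forces $\dim V_e(D)=1$ (collinearity of $B_1$) while $\gamma_e(B,D)\leq N-1$ supplies the missing unit.

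The main obstacle is the case $e<f$ and $d-e<f$ simultaneously. Here neither $\psi_e(B_1)$ nor $\psi_{d-e}(B_2)$ need be in general position, and the two estimates above are individually too weak: their sum can exceed $\binom{d+2}{2}$, because separately they permit, say, $B_1$ to consist of many collinear points while $B_2$ is large and degenerate — a configuration that in fact forces $B\subseteq\Cc_{\leq f}$. To exclude this I will use the non-containment of $B$ jointly. If $g_1$ and $g_2$ denote the least degrees of curves containing $B_1$ and $B_2$ respectively (both exist since $\alpha_e(D),\beta_e(B,D)\geq 0$ make $V_e(D),W_e(B,D)$ proper, cf.\ Lemma \ref{R21}), then $g_1+g_2\geq f+1$; otherwise Remark \ref{R10}.i would produce a curve of degree $\leq f$ through $B=B_1\cup B_2$. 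The remaining, and most technical, step is to convert this degree inequality into the defect bound $p+q\leq e(d-e)+1$: points lying on a curve of small degree impose few conditions, so a small $g_i$ bounds the corresponding defect but simultaneously forces the other $g_j$ (and hence the other set's $\psi$-image, via Remark \ref{R12}.iii and Bézout, Theorem \ref{R9}) to be large, giving the trade-off "the more degenerate one part is, the less degenerate the other must be." Packaging this trade-off quantitatively — relating $g_i$ to $\dim\psi_{e}(B_1)$ and $\dim\psi_{d-e}(B_2)$ and summing — is where the real work lies, with the final unit in the $c=3$ branch again coming from $\gamma_e(B,D)\leq N-1$.
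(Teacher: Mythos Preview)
Your treatment of $\mu_e(B,D)$ is correct and matches the paper's argument. The easy $\tau_e$ cases (at least one of $e\geq f$, $d-e\geq f$) are also along the right lines, though your handling of the boundary subcases is sketchy: the paper actually splits Case $e\geq f,\ d-e<f$ into $e=d-1$ versus $e<d-1$, and Case $e<f,\ d-e\geq f$ into three subcases ($e=1$ with $\alpha_e(D)>0$; $e=1$ with $\alpha_e(D)=0$; $e>1$), because the generic inequality is tight or fails by one unit at these edges and a separate, easy argument is needed each time. Your single sentence about the ``boundary case $e=1$, $f=d-1$'' does not cover all of these.

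The genuine gap is the case $e<f$ and $d-e<f$. You correctly identify this as the obstacle and set up the minimum-degree invariants $g_1,g_2$ with $g_1+g_2\geq f+1$, but you then stop: the sentence ``packaging this trade-off quantitatively\ldots is where the real work lies'' is an acknowledgement that the argument is incomplete. It is not clear that your proposed route---relating $g_i$ to $\dim\psi_e(B_1)$ and $\dim\psi_{d-e}(B_2)$ via Remark~\ref{R12}.iii and B\'ezout---can be made to yield the precise inequality $p+q\leq e(d-e)+1$ you need; B\'ezout controls intersections of two curves, not directly the defect of a point set under the Veronese map, and there is no evident mechanism here.

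The paper closes this case with a tool you did not invoke: Lemma~\ref{R15}.iii. The key observation is that $V_e(D)\supseteq\psi_e\bigl(B\setminus\psi_{d-e}^{-1}(W_e(B,D))\bigr)$, so that once $W_e(B,D)$ is proper (which it is, since $\beta_e(B,D)\geq 0$) and $d-e<f$, Lemma~\ref{R15}.iii gives $\dim V_e(D)\geq\binom{f-(d-e)+2}{2}-1$, i.e.\ $\alpha_e(D)\leq\binom{e+2}{2}-1-\binom{f-(d-e)+2}{2}$. Symmetrically one gets $\beta_e(B,D)\leq\binom{d-e+2}{2}-2-\binom{f-e+2}{2}$. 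Adding these to $|B|+3=\binom{f+2}{2}+3$ and using the elementary inequality
\[
\binom{e+2}{2}+\binom{d-e+2}{2}+\binom{f+2}{2}-\binom{f-(d-e)+2}{2}-\binom{f-e+2}{2}<\binom{d+2}{2}
\]
finishes the case cleanly. This is exactly the ``joint'' use of non-containment you were reaching for, but the content is already packaged in Lemma~\ref{R15}.iii; no new minimum-degree argument is needed.
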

  \begin{proof}
  Fix   $e\in[1,d-1]$ and $D\in\Pc(B)$. If $\alpha_e(D)<0$, then $\tau_e(B,D)=\mu_e(B,D)=0$. Hence, from now on, we assume that
  \begin{equation}
  \label{E80}
  \alpha_e(D)\geq 0.
  \end{equation}
  First we show that
  \begin{equation}
  \label{E81}
  \mu_e(B,D)<\binom{d+2}{2}.
  \end{equation}
  If $e\geq f$, then Lemma \ref{R15}.i leads to
  \begin{equation*}
  |\psi_e(B)\cap V_e(D)|\leq 1+\dim V_e(D), 
  \end{equation*}
   and hence 
   \begin{align*}
   |\psi_e(B)\cap V_e(D)|+\alpha_e(D)&\leq 1+\dim V_e(D)+\alpha_e(D)\\
   &=\binom{e+2}{2}-1\\
   &\leq \binom{d+1}{2}-\binom{d-e+1}{2}.&\Big(\text{since $d-1\geq e$}\Big)
   \end{align*}
   If $e< f$, then Lemma \ref{R15}.ii leads to
  \begin{equation*}
  |\psi_e(B)\cap V_e(D)|\leq \binom{f+2}{2}-\binom{f-e+2}{2}-\binom{e+2}{2}+2+\dim V_e(D)
  \end{equation*}
 so
   \begin{align*}
   |\psi_e(B)\cap V_e(D)|+\alpha_e(D)&\leq \binom{f+2}{2}-\binom{f-e+2}{2}\\
    &\leq \binom{d+1}{2}-\binom{d-e+1}{2}.&\Big(\text{since $d-1\geq f$}\Big)
   \end{align*}
   Thus, in any case, 
   \begin{equation}
   \label{E82}
   \gamma_e(B,D)+\alpha_e(D)=|\psi_e(B)\cap V_e(D)|+\alpha_e(D)\leq \binom{d+1}{2}-\binom{d-e+1}{2}.
   \end{equation}
   We get (\ref{E81}) as follows.
   \begin{align*}
   \mu_e(B,D)&= \gamma_e(B,D)+\alpha_e(D)+\binom{d-e+2}{2}&\Big(\text{by (\ref{E80})}\Big)\\
   &=\binom{d+1}{2}-\binom{d-e+1}{2}+\binom{d-e+2}{2}&\Big(\text{by (\ref{E82})}\Big)\\
   &<\binom{d+2}{2}.
   \end{align*}
   Now we prove that
    \begin{equation}
  \label{E83}
  \tau_e(B,D)<\binom{d+2}{2}.
  \end{equation}
  If $\beta_e(B,D)<0$, then $\tau_e(B,D)=0$ and in this case we are done by (\ref{E81}). Thus we assume that 
  \begin{equation}
  \label{E84}
  \beta_e(B,D)\geq 0.
  \end{equation}
  Remark \ref{R20}.i gives
  \begin{equation}
  \label{E85}
    \left|B\cap \psi_e^{-1}(V_e(D))\right|+ \left|B\cap \psi_{d-e}^{-1}(W_e(B,D))\right|\geq |B|=\binom{f+2}{2}.
   \end{equation}
   From (\ref{E80}) and   (\ref{E84}),  Remark \ref{R20}.iv  yields that
   \begin{equation}
   \label{E86}
   \tau_e(B,D)\leq \alpha_e(D)+\beta_e(B,D)+\binom{f+2}{2}+3.
\end{equation}    
   We divide  the conclusion of the proof of (\ref{E83}) into four cases.
   \begin{enumerate}
   \item[$\bullet$]Assume that $e\geq f$ and $d-e\geq f$. We apply Lemma \ref{R15}.i to $V_e(D)$ and $W_e(B,D)$ to get 
   \begin{align}
   \label{E87}
   \alpha_e(D)&\leq \binom{e+2}{2}-1-|\psi_e(B)\cap V_e(D)|\\
   \label{E88}
    \beta_e(B,D)&\leq \binom{d-e+2}{2}-2-|\psi_{d-e}(B)\cap W_e(B,D)|.
   \end{align}
   From (\ref{E85}), (\ref{E87}) and (\ref{E88}), we get that
   \begin{equation}
   \label{E89}
    \alpha_e(D)+\beta_e(B,D)\leq \binom{e+2}{2}+\binom{d-e+2}{2}-3-\binom{f+2}{2}.
   \end{equation}
   Hence 
   \begin{align*}
   \tau_e(B,D)&\leq \alpha_e(D)+\beta_e(B,D)+\binom{f+2}{2}+3&\Big(\text{by (\ref{E86})}\Big)\\
   &\leq \binom{e+2}{2}+\binom{d-e+2}{2}&\Big(\text{by (\ref{E89})}\Big)\\
   &<\binom{d+2}{2}.&\Big(\text{since $d\geq 3$}\Big)
   \end{align*}
   \item[$\bullet$]Assume that $e\geq f$ and $d-e< f$. We apply Lemma \ref{R15}.i to $V_e(D)$ and Lemma \ref{R15}.ii to $W_e(B,D)$ so
   \begin{align}
   \label{E90}
   \alpha_e(D)&\leq \binom{e+2}{2}-1-|\psi_e(B)\cap V_e(D)|\\
   \label{E91}
    \beta_e(B,D)&\leq \binom{f+2}{2}-\binom{f-(d-e)+2}{2}-1-|\psi_{d-e}(B)\cap W_e(B,D)|.
   \end{align}
   We have two subcases.
   \begin{enumerate}
   \item[$\star$]Suppose that $e=d-1$. Then 
\begin{equation*}
\beta_e(B,D)=\binom{3}{2}-3-\dim W_e(B,D)=-\dim W_e(B,D)
\end{equation*}   
    so (\ref{E84}) implies that 
   \begin{equation}
   \label{E92}
   \beta_e(B,D)=\dim W_e(B,D)=0.
   \end{equation}
   Since $\dim W_e(B,D)=0$, we have that $W_e(B,D)$ is a point and hence (\ref{E85}) leads to
   \begin{equation}
   \label{E93}
    \left|B\cap \psi_e^{-1}(V_e(D))\right|\geq \binom{f+2}{2}-1.
   \end{equation}
    From (\ref{E90}), (\ref{E92}) and (\ref{E93}), we get that
   \begin{equation}
   \label{E94}
    \alpha_e(D)+\beta_e(B,D)=\alpha_e(D)\leq \binom{e+2}{2}-\binom{f+2}{2}.
   \end{equation}
   Hence 
   \begin{align*}
   \tau_e(B,D)&\leq \alpha_e(D)+\beta_e(B,D)+\binom{f+2}{2}+3&\Big(\text{by (\ref{E86})}\Big)\\
   &\leq \binom{e+2}{2}+3&\Big(\text{by (\ref{E94})}\Big)\\
   &<\binom{d+2}{2}.&\Big(\text{since $e+1=d\geq 3$}\Big)
   \end{align*}
    \item[$\star$]Suppose that $e<d-1$.  From (\ref{E85}), (\ref{E90}) and (\ref{E91}), we get that
   \begin{equation}
   \label{E95}
    \alpha_e(D)+\beta_e(B,D)\leq \binom{e+2}{2}-\binom{f-(d-e)+2}{2}-2.
   \end{equation}
   Thus
   \begin{align*}
   \tau_e(B,D)&\leq \alpha_e(D)+\beta_e(B,D)+\binom{f+2}{2}+3&\Big(\text{by (\ref{E86})}\Big)\\
   &\leq \binom{e+2}{2}-\binom{f-(d-e)+2}{2}+\binom{f+2}{2}+1&\Big(\text{by (\ref{E95})}\Big)\\
    &<\binom{d+2}{2}.&\Big(\text{since $f,e+1\leq d-1$}\Big)\\
     \end{align*}
   \end{enumerate}
   \item[$\bullet$]Assume that $e<f$ and $d-e\geq  f$. We apply Lemma \ref{R15}.ii to $V_e(D)$ and Lemma \ref{R15}.i to $W_e(B,D)$ so
   \begin{align}
   \label{E96}
   \alpha_e(D)&\leq  \binom{f+2}{2}-\binom{f-e+2}{2}-|\psi_e(B)\cap V_e(D)|\\
   \label{E97}
    \beta_e(B,D)&\leq \binom{d-e+2}{2}-2-|\psi_{d-e}(B)\cap W_e(B,D)|.
   \end{align}
   We have three subcases.
   \begin{enumerate}
    \item[$\star$]Suppose that $e=1$ and $\alpha_e(D)>0$. Since $e=1$,  
\begin{equation}
\label{E98}
\alpha_e(D)=\binom{3}{2}-2-\dim V_e(D)=1-\dim V_e(D)
\end{equation}   
    so  the assumption  $\alpha_e(D)>0$ leads to
   \begin{equation}
   \label{E99}
   \alpha_e(D)-1=\dim V_e(D)=0.
   \end{equation}
    Since $\dim V_e(D)=0$, we have that $V_e(D)$ is just  a point and thereby (\ref{E85}) gives
   \begin{equation}
   \label{E100}
    \left|B\cap \psi_{d-e}^{-1}(W_e(B,D))\right|\geq \binom{f+2}{2}-1.
   \end{equation}
    From (\ref{E97}), (\ref{E99}) and (\ref{E100}), we get that
   \begin{equation}
   \label{E101}
    \alpha_e(D)+\beta_e(B,D)=1+\beta_e(B,D)\leq \binom{d-e+2}{2}-\binom{f+2}{2}.
   \end{equation}
   Hence 
   \begin{align*}
   \tau_e(B,D)&\leq \alpha_e(D)+\beta_e(B,D)+\binom{f+2}{2}+3&\Big(\text{by (\ref{E86})}\Big)\\
   &\leq \binom{d-e+2}{2}+3&\Big(\text{by (\ref{E101})}\Big)\\
   &<\binom{d+2}{2}.&\Big(\text{since $d\geq 3$}\Big)
   \end{align*}
    \item[$\star$]Suppose that $e=1$ and $\alpha_e(D)=0$. Proceeding as in (\ref{E98}), we conclude that 
   \begin{equation}
   \label{E102}
   1+\alpha_e(D)=\dim V_e(D)=1.
   \end{equation}
   If $\gamma_e(B,D)>\binom{d+2}{2}-\binom{d-e+2}{2}-1=d$, then $\tau_e(B,D)=0$. Thus we assume that $\gamma_e(B,D)\leq d$. If $\gamma_e(B,D)=d$, then (\ref{E85}) implies that 
   \begin{equation*}
   |\psi_{d-e}(B)\cap W_e(B,D)|\geq \binom{f+2}{2}-\gamma_e(B,D)=\binom{f+2}{2}-d,
   \end{equation*}
    and then (\ref{E97}) and (\ref{E102}) lead to
    \begin{equation*}
    \tau_e(B,D)=\alpha_e(D)+\beta_e(B,D)+\binom{f+2}{2}+2\leq \binom{d-e+2}{2}+d.
    \end{equation*}
    If $\gamma_e(B,D)\leq d-1$, then (\ref{E85}) gives  
   \begin{equation*}
   |\psi_{d-e}(B)\cap W_e(B,D)|\geq \binom{f+2}{2}-\gamma_e(B,D)\geq \binom{f+2}{2}-d+1,
   \end{equation*}
    and then (\ref{E97}) and (\ref{E102}) imply that
    \begin{equation*}
    \tau_e(B,D)=\alpha_e(D)+\beta_e(B,D)+\binom{f+2}{2}+3\leq \binom{d-e+2}{2}+d.
    \end{equation*}
    In any case, 
          \begin{align*}
   \tau_e(B,D) &\leq \binom{d+1}{2}+d<\binom{d+2}{2}.
   \end{align*}
   \item[$\star$]Suppose that $e>1$. From (\ref{E85}), (\ref{E96}) and (\ref{E97}), we get that
   \begin{equation}
   \label{E103}
    \alpha_e(D)+\beta_e(B,D)\leq \binom{d-e+2}{2}-\binom{f-e+2}{2}-2.
   \end{equation}
   Thus
   \begin{align*}
   \tau_e(B,D)&\leq \alpha_e(D)+\beta_e(B,D)+\binom{f+2}{2}+3&\Big(\text{by (\ref{E86})}\Big)\\
   &\leq \binom{d-e+2}{2}-\binom{f-e+2}{2}+\binom{f+2}{2}+1&\Big(\text{by (\ref{E103})}\Big)\\
   &<\binom{d+2}{2}.&\Big(\text{since $e>1$}\Big)\\
     \end{align*}
   \end{enumerate}
   \item[$\bullet$]Assume that $e< f$ and $d-e< f$. The definitions of $W_e(B,D)$ and $V_e(D)$ lead to  $W_{e}(B,D)\supseteq \psi_{d-e}(B\setminus\psi^{-1}_{e}(V_e(D)))$  and $V_{e}(D)\supseteq \psi_{e}(B\setminus\psi^{-1}_{d-e}(W_e(B,D)))$. Then  we get that $\dim W_{e}(B,D)\geq \\\dim\psi_{d-e}(B\setminus\psi^{-1}_{e}(V_e(D)))$  and $\dim V_{e}(D)\geq\dim \psi_{e}(B\setminus\psi^{-1}_{d-e}(W_e(B,D)))$. Hence, as a consequence of Lemma \ref{R15}.iii applied to $V_e(D)$ and $W_e(B,D)$, we get that 
   \begin{align}
   \label{E104}
   \alpha_e(D)&\leq \binom{e+2}{2}-1-\binom{f-(d-e)+2}{2}\\
   \label{E105}
    \beta_e(B,D)&\leq \binom{d-e+2}{2}-\binom{f-e+2}{2}-2.
   \end{align}
   Using that $\binom{g+2}{2}=\sum_{i=1}^{g+1}i$ for any $g\in\Natz$, it is proven easily that 
   \begin{equation}
   \label{EE105}
   \binom{e+2}{2}+\binom{d-e+2}{2}+\binom{f+2}{2}-\binom{f-(d-e)+2}{2}-\binom{f-e+2}{2}<\binom{d+2}{2}.
   \end{equation}
     Hence 
   \begin{align*}
   \tau_e(B,D)\leq & \alpha_e(D)+\beta_e(B,D)+\binom{f+2}{2}+3&\Big(\text{by (\ref{E86})}\Big)\\
   \leq &\binom{e+2}{2}+\binom{d-e+2}{2}+\binom{f+2}{2}&\\
   &-\binom{f-(d-e)+2}{2}-\binom{f-e+2}{2}&\Big(\text{by (\ref{E104}),(\ref{E105})}\Big)\\
   <&\binom{d+2}{2}.&\Big(\text{by (\ref{EE105})}\Big)
   \end{align*}
   \end{enumerate}
   This concludes the proof of (\ref{E83}). The claim is a consequence of (\ref{E81}) and (\ref{E83}).
  \end{proof}
   \begin{lem}
   \label{R26}
    Let $d\in\Nat$, $f\in[0,d-1]$,  $C_0\in\Cc_{d-f}$ be irreducible and $B_0\in\Pc_{\binom{f+2}{2}}(\Real^2\setminus C_0)$ be such that $B_0$ is not contained in an element of $\Cc_{\leq f}$. Take  $B\in \Pc_{\binom{d+2}{2}-3}(\Real^2)$ such that   $B\supseteq B_0$ and  $B\cap C_0=B\setminus B_0$, and also take  $(e,D)\in ([1,d-1]\times \Pc(B))\setminus I(B,C_0)$ such that $\alpha_e(D)\geq 0$.  Then $\beta_e(B,D)<0$ and  $\gamma_e(B,D)<\binom{d+2}{2}-\binom{d-e+2}{2}-2$.
       \end{lem}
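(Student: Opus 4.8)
The plan is to exploit the hypothesis $(e,D)\in([1,d-1]\times\Pc(B))\setminus I(B,C_0)$, which unwinds to $C_0\subseteq U_e(B,D)$, together with the irreducibility of $C_0$. The strategy is to pin down \emph{which} of the (up to three) closed pieces of $U_e(B,D)$ can contain $C_0$; I expect to show it must be the ``$V_e$-part'' $\psi_e^{-1}(V_e(D))$. Once that is established, the bound on $\gamma_e(B,D)$ drops out of Lemma \ref{R17}, and $\beta_e(B,D)<0$ follows from a degree count.

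First I would record the key exclusion $C_0\nsubseteq\psi_d^{-1}(V_d(B))$. If instead $C_0\subseteq\psi_d^{-1}(V_d(B))$, then $\psi_d(B\cup C_0)\subseteq V_d(B)$, so $\dim\psi_d(B\cup C_0)\leq\dim\psi_d(B)\leq|B|-1=\binom{d+2}{2}-4$. On the other hand $B\setminus C_0=B_0$, and since $B_0$ is not contained in an element of $\Cc_{\leq f}$, Remark \ref{R12}.ii gives $\dim\psi_f(B_0)=\binom{f+2}{2}-1$; for $f\geq1$, Lemma \ref{R18}.i (with $e=d-f$, $C=C_0$) yields $\dim\psi_d(B\cup C_0)=\binom{d+2}{2}-1$, a contradiction. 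The edge case $f=0$ I would treat directly: then $B_0$ is a single point off the irreducible degree-$d$ curve $C_0$, and since an irreducible degree-$d$ curve lies in a unique element of $\Cc_{\leq d}$ (so $\dim\psi_d(C_0)=\binom{d+2}{2}-2$) while this point lies outside the corresponding hyperplane, again $\dim\psi_d(B\cup C_0)=\binom{d+2}{2}-1$.

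Next I would prove $\beta_e(B,D)<0$ by contradiction. Assuming $\beta_e(B,D)\geq0$ gives $U_e(B,D)=\psi_d^{-1}(V_d(B))\cup\psi_e^{-1}(V_e(D))\cup\psi_{d-e}^{-1}(W_e(B,D))$. As $\alpha_e(D)\geq0$ I fix a hyperplane $H\supseteq V_e(D)$ and set $C:=\psi_e^{-1}(H)\in\Cc_{\leq e}$; as $\beta_e(B,D)\geq0$, Lemma \ref{R21}.ii produces $C_4\in\Cc_{\leq d-e}$ with $\psi_{d-e}^{-1}(W_e(B,D))\subseteq C_4$. Since $C_0$ is irreducible and sits in the union of the three closed pieces, the exclusion above forces either $C_0\subseteq\psi_e^{-1}(V_e(D))\subseteq C$ or $C_0\subseteq\psi_{d-e}^{-1}(W_e(B,D))\subseteq C_4$. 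In the first case Remark \ref{R10}.ii writes $C=C_0\cup C_3$ with $C_3\in\Cc_{\leq e+f-d}$; using $B\setminus\psi_e^{-1}(V_e(D))\subseteq\psi_{d-e}^{-1}(W_e(B,D))\subseteq C_4$ and $B_0\cap C_0=\emptyset$, I would split $B_0=(B_0\cap\psi_e^{-1}(V_e(D)))\cup(B_0\setminus\psi_e^{-1}(V_e(D)))\subseteq C_3\cup C_4\in\Cc_{\leq f}$ by Remark \ref{R10}.i, contradicting the hypothesis on $B_0$. In the second case the symmetric decomposition $C_4=C_0\cup C_1$ with $C_1\in\Cc_{\leq f-e}$ gives $B_0\subseteq C\cup C_1\in\Cc_{\leq f}$, the same contradiction. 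Hence $\beta_e(B,D)<0$.

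Finally, with $\beta_e(B,D)<0\leq\alpha_e(D)$ we have $U_e(B,D)=\psi_d^{-1}(V_d(B))\cup\psi_e^{-1}(V_e(D))$, so the exclusion and irreducibility give $C_0\subseteq\psi_e^{-1}(V_e(D))\subseteq C=\psi_e^{-1}(H)$. Then Remark \ref{R10}.ii forces $e\geq d-f$, so $e\in[d-f,d]$, and Lemma \ref{R17} applied to $C\supseteq C_0$ gives $|B\cap C|<\binom{d+2}{2}-\binom{d-e+2}{2}-2$; since $\gamma_e(B,D)=|V_e(D)\cap\psi_e(B)|\leq|H\cap\psi_e(B)|=|C\cap B|$, this is exactly the claimed bound. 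I expect the main obstacle to be the third step: correctly deciding which piece of $U_e(B,D)$ absorbs the irreducible curve $C_0$ and then arranging the degree bookkeeping (via Remark \ref{R10}) so that in every case the off-curve points $B_0$ collapse into a single member of $\Cc_{\leq f}$. The passages ``irreducible $\Rightarrow$ contained in one component'' and the divisibility of defining polynomials are justified throughout by Theorem \ref{R9} together with the fact that $C_0$ is an infinite curve.
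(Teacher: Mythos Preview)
Your argument is correct and reaches the same conclusion, but it takes a somewhat different route from the paper's in the first two steps. The paper works throughout with enclosing \emph{curves} $C_1\in\Cc_{\leq d}$, $C_2\in\Cc_{\leq e}$, $C_3\in\Cc_{\leq d-e}$ (supplied by Lemma~\ref{R21}) rather than the pieces $\psi_d^{-1}(V_d(B))$, $\psi_e^{-1}(V_e(D))$, $\psi_{d-e}^{-1}(W_e(B,D))$ themselves, and then applies Lemma~\ref{R17} uniformly three times: once to $C_1\supseteq C_0$ (ruled out because $B\subseteq C_1$ would violate the bound $|B\cap C_1|<\binom{d+2}{2}-3$), once to $C_2\cup C_3\in\Cc_{\leq d}$ (which together with $B\subseteq C_2\cup C_3$ from Remark~\ref{R20}.i forces $\beta_e<0$), and once to $C_2$ (to bound $\gamma_e$, exactly as you do). Your step~1 instead invokes Lemma~\ref{R18}.i and a dimension count, and your step~2 replaces the single appeal to Lemma~\ref{R17} by a case split on whether $C_0$ lands in the $V_e$-piece or the $W_e$-piece, in each case explicitly assembling a curve in $\Cc_{\leq f}$ containing $B_0$; this is essentially the proof of Lemma~\ref{R17} unrolled with the degree bookkeeping made visible. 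The paper's approach is more economical and avoids the case split; yours is more self-contained. One small remark: your inference ``$C_0$ irreducible and contained in a finite union of Zariski-closed pieces $\Rightarrow$ contained in one piece'' relies on $C_0$ being irreducible in the real Zariski topology, which does follow from $|C_0|=\infty$ (every proper Zariski-closed subset of such a $C_0$ is finite by Theorem~\ref{R9}), but over $\Real$ algebraic irreducibility of the defining polynomial alone does not guarantee this, so it is worth stating explicitly.
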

     \begin{proof}
Since $\alpha_e(D)\geq 0$, 
\begin{align*}
U_e(B,D)&=\left\{ \begin{array}{ll}
 \psi_d^{-1}(V_d(B))\cup\psi_e^{-1}(V_e(D)) & \text{if $\beta_e(B,D)<0$}\\
&\\
 \psi_d^{-1}(V_d(B))\cup\psi_e^{-1}(V_e(D))\cup \psi_{d-e}^{-1}(W_e(B,D)) & \text{if $\beta_e(B,D)\geq 0$}.\end{array} \right.
   \end{align*}
In so far as $|B|<\binom{d+2}{2}-1$, Lemma \ref{R21}.iii implies that there is $C_1\in \Cc_{\leq d}$ such that $ \psi_d^{-1}(V_d(B))\subseteq C_1$. Since $\alpha_e(D)\geq 0$, Lemma \ref{R21}.i yields the existence of $C_2\in \Cc_{\leq d}$ such that $\psi_e^{-1}(V_e(D))\subseteq C_2$. Now, if $\beta_e(B,D)<0$, write $C_3=\emptyset$, and if  $\beta_e(B,D)\geq 0$, we fix $C_3\in \Cc_{\leq d-e}$ such that $\psi_{d-e}^{-1}(W_e(B,D))\subseteq C_3$ (which exists by Lemma \ref{R21}.ii). Thus, in any case, 
 \begin{equation}
 \label{E106}
 U_e(B,D)\subseteq C_1\cup C_2\cup C_3 .
 \end{equation}
 In so far as $(e,D)\not \in I(B,C_0)$, note that $C_0\subseteq U_e(B,D)$. From (\ref{E106}), we have that $C_0$ is an irreducible component of $C_1$ or $C_2\cup C_3$. We claim that 
  \begin{equation}
  \label{E107}
  C_0\subseteq C_2\cup C_3.
  \end{equation}
  If (\ref{E107}) is false, then $C_0$ is a component of $C_1$. Lemma \ref{R17} applied to $C_0$ and $C_1$ gives 
 \begin{equation}
 \label{E108}
 |B\cap C_1|<\binom{d+2}{2}-\binom{d-d+2}{2}-2=\binom{d+2}{2}-3.
 \end{equation}
 However, 
 \begin{equation*}
 B\subseteq B\cap  \psi_d^{-1}(V_d(B)) \subseteq B\cap C_1;
 \end{equation*}
   hence $|B\cap C_1|=|B|=\binom{d+2}{2}-3$ contradicting (\ref{E108}), and this proves (\ref{E107}).  
   
   Now we show that 
   \begin{equation}
   \label{E109}
   \beta_e(B,D)<0.
   \end{equation}
   If (\ref{E109}) is false, then $C_3\neq \emptyset$.  Since $C_2\in\Cc_{\leq e}$ and $C_3\in \Cc_{\leq d-e}$, notice that $C_2\cup C_3\in\Cc_{\leq d}$. From (\ref{E107}), we can apply  Lemma \ref{R17}  to $C_0$ and $C_2\cup C_3$ so  
 \begin{equation}
 \label{E110}
 |B\cap (C_2\cup C_3)|<\binom{d+2}{2}-3.
 \end{equation}
From Remark \ref{R20}.i,
 \begin{equation*}
 B\subseteq \psi_e^{-1}(V_e(D))\cup  \psi_{d-e}^{-1}(W_e(B,D))\subseteq C_2\cup C_3.
 \end{equation*}
 Hence $|B\cap (C_2\cup C_3)|=|B|=\binom{d+2}{2}-3$ contradicting (\ref{E110}), and this proves (\ref{E109}).
 
 Finally, we show that 
   \begin{equation}
   \label{E111}
   \gamma_e(B,D)<\binom{d+2}{2}-\binom{d-e+2}{2}-2.
   \end{equation}
From (\ref{E109}), we get that $C_3=\emptyset$. From (\ref{E107}), $C_0\subseteq C_2$. Thus, since $C_0$ is irreducible, $e\geq d-f$. Applying  Lemma \ref{R17}  to $C_0$ and $C_2$, 
 \begin{equation}
 \label{E112}
 |B\cap C_2|<\binom{d+2}{2}-\binom{d-e+2}{2}-2.
 \end{equation}
 From (\ref{E112}), 
 \begin{equation*}
  \gamma_e(B,D)=|B\cap \psi_e^{-1}(V_e(D))|\leq |B\cap C_2|<\binom{d+2}{2}-\binom{d-e+2}{2}-2,
 \end{equation*}
 and this proves (\ref{E111}). The claim of the lemma is a consequence of (\ref{E109}) and (\ref{E111}). 
     \end{proof}
\begin{lem}
 \label{R27}
 Let $d\in\Nat$, $f\in[0,d-1]$,  $C_0\in\Cc_{\leq d}$ be irreducible and $B\in\Pc(\Real^2)$ be such that $	|B|\leq \binom{d+2}{2}-1$. Then, for all $(e,D)\in I(B,C_0)$, 
 \begin{equation*}
   |C_0\cap U_e(B,D)|\leq 2d^2.
 \end{equation*}
\end{lem}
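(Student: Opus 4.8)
The plan is to bound $|C_0\cap U_e(B,D)|$ piece by piece with Bezout's theorem (Theorem \ref{R9}), using that $C_0$ is irreducible and that $(e,D)\in I(B,C_0)$ forces $C_0\nsubseteq U_e(B,D)$. First I would recall the shape of $U_e(B,D)$: depending on the signs of $\alpha_e(D)$ and $\beta_e(B,D)$, it is the union of either one, two, or three of the sets $\psi_d^{-1}(V_d(B))$, $\psi_e^{-1}(V_e(D))$, and $\psi_{d-e}^{-1}(W_e(B,D))$. Every set that actually occurs in $U_e(B,D)$ is a subset of $U_e(B,D)$, so from $C_0\nsubseteq U_e(B,D)$ I obtain $C_0\nsubseteq P$ for each present piece $P$ (were $C_0\subseteq P$ we would get $C_0\subseteq U_e(B,D)$).

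The key local step is uniform across the three kinds of pieces. Take for instance $P=\psi_e^{-1}(V_e(D))$, which is present only when $\alpha_e(D)\geq 0$; then $\dim V_e(D)\leq \binom{e+2}{2}-2$, so $V_e(D)$ is a proper flat. A proper flat equals the intersection of the hyperplanes containing it, hence $P=\bigcap_{H\supseteq V_e(D)}\psi_e^{-1}(H)$ (and if $V_e(D)=\emptyset$ then $P=\emptyset$ meets $C_0$ in the empty set, contributing $0$). Since $C_0\nsubseteq P$, there is a single hyperplane $H$ with $V_e(D)\subseteq H$ and $C_0\nsubseteq \psi_e^{-1}(H)$. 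By Remark \ref{R12}.ii we have $\psi_e^{-1}(H)\in\Cc_{\leq e}$; because $C_0$ is irreducible and is not contained in $\psi_e^{-1}(H)$, the two curves share no irreducible component, so Theorem \ref{R9} yields
\begin{equation*}
|C_0\cap P|\leq |C_0\cap \psi_e^{-1}(H)|\leq \deg(C_0)\deg(\psi_e^{-1}(H))\leq de.
\end{equation*}
The same argument, using that $|B|\leq \binom{d+2}{2}-1$ makes $V_d(B)$ proper ($\dim V_d(B)\leq |B|-1$) and that $\beta_e(B,D)\geq 0$ makes $W_e(B,D)$ proper, gives $|C_0\cap \psi_d^{-1}(V_d(B))|\leq d\cdot d$ and $|C_0\cap \psi_{d-e}^{-1}(W_e(B,D))|\leq d(d-e)$.

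Summing over the present pieces finishes the proof. In the extreme case $\alpha_e(D),\beta_e(B,D)\geq 0$ all three terms appear and
\begin{equation*}
|C_0\cap U_e(B,D)|\leq d^2+de+d(d-e)=2d^2,
\end{equation*}
while the cases $\alpha_e(D)<0$ and $\beta_e(B,D)<0\leq \alpha_e(D)$ produce the smaller bounds $d^2$ and $d^2+de\leq 2d^2$. I expect no genuine obstacle here; the only care needed is the bookkeeping, namely checking in each sign-case that the relevant flats are proper (so a containing hyperplane, and thus an honest curve of controlled degree to feed into Bezout, exists) and confirming that the degree sum is exactly $2d^2$. That last computation also shows the bound is tight and that the factor $2$ cannot be removed. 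The whole argument is just the slogan that each piece of $U_e(B,D)$ lies on a curve of degree at most $d$ meeting the irreducible curve $C_0$ transversally in the sense of Bezout.
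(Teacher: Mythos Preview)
Your proof is correct and follows essentially the same approach as the paper's: decompose $U_e(B,D)$ into its one, two, or three pieces, use $(e,D)\in I(B,C_0)$ to ensure $C_0$ is not contained in any present piece, pick for each piece a hyperplane containing the corresponding proper flat but not $\psi_{d_i}(C_0)$, and apply Bezout to the resulting curves of degrees $d$, $e$, $d-e$ to obtain the sum $d(d+e+(d-e))=2d^2$. The only difference is cosmetic---you phrase the hyperplane selection via $P=\bigcap_{H\supseteq F}\psi^{-1}(H)$ whereas the paper argues directly from $\psi_{d_i}(C_0)\nsubseteq F_i$---and your final remark about tightness of the bound is extraneous and unjustified, but harmless.
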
 
\begin{proof}
Fix $p(x,y)\in\Real_d[x,y]$ such that $C_0=\Zc(p(x,y))$. Write $F_1:=V_d(B), F_2:=V_e(D)$ and $F_3:=W_e(B,D)$, and also set $d_1:=d$, $d_2:=e$ and $d_3:=d-e$. Depending on the values of $\alpha_e(D)$ and $\beta_e(B,D)$, there is a subset $I$ of $[1,3]$ such that $U_e(B,D)=\bigcup_{i\in I}\psi_{d_i}^{-1}(F_i)$ with $F_i$ a proper flat in $\Real^{\binom{d_i+2}{2}-1}$ for each $i\in I$ (here we use the assumption $|B|\leq \binom{d+2}{2}-1$ to warranty that $\dim F_1\leq |B|-1<\binom{d+2}{2}-1$). Since $(e,D)\in I(B,C_0)$, we get that 
\begin{equation*}
C_0\nsubseteq U_e(B,D)=\bigcup_{i\in I}\psi_{d_i}^{-1}(F_i).
\end{equation*}
Thus   $C_0\nsubseteq \psi^{-1}_{d_i}(F_i)$  and therefore $\psi_{d_i}(C_0)\nsubseteq F_i$ for each $i\in I$. Since  $F_i$ is a proper flat in $\Real^{\binom{d_i+2}{2}-1}$ and  $\psi_{d_i}(C_0)\nsubseteq F_i$, there exists a hyperplane $H_i$ in  $\Real^{\binom{d_i+2}{2}-1}$ such that $H_i\supseteq F_i$ and $\psi_{d_i}(C_0)\nsubseteq H_i$. For each $i\in I$, set $C_i:=\psi_{d_i}^{-1}(H_i)$ and note that $C_0$ is not a component of $C_i$.  Thus Theorem  \ref{R9} applied to each intersection $|C_0\cap C_i|$ yields 
\begin{align*}
|C_0\cap U_e(B,D)|&\leq \sum_{i\in I} |C_0\cap C_i|\leq d(d_1+d_2+d_3)=2d^2,
\end{align*}
 and this concludes the proof. 
\end{proof}    
     
     \begin{lem}
   \label{R28}
    Let $d\in\Zet$ be such that $d\geq 3$, $f\in[0,d-1]$,  $C_0\in\Cc_{d-f}$ be irreducible and $A\in\Pc(\Real^2)$ be such that $|A\cap C_0|\geq d^52^{\binom{d+2}{2}}$  and $A$ is not contained in an element of $\Cc_{\leq d}$. For all $B_0\in\Pc_{\binom{f+2}{2}}(A\setminus C_0)$  such that $B_0$ is not contained in an element of $\Cc_{\leq f}$, we have that 
    \begin{equation*}
    |\Nc_d(A,B_0,C_0)|\geq \frac{1}{2^{d+3}!}|A\cap C_0|^{\binom{d+2}{2}-3-\binom{f+2}{2}}.
    \end{equation*}
       \end{lem}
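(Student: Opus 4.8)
The plan is to mimic the proof of Lemma \ref{R24}, but with the $d$-regularity hypothesis (which there controlled $|A\cap U|$ for the forbidden sets) replaced by Bézout's theorem against the fixed irreducible curve $C_0$. Set $g:=\binom{d+2}{2}-3$ and $h:=\binom{f+2}{2}$. For $\obb=(\bb_{h+1},\ldots,\bb_g)\in (A\cap C_0)^{g-h}$ I would put $B_h(\obb):=B_0$ and $B_i(\obb):=B_{i-1}(\obb)\cup\{\bb_i\}$ for $i\in[h+1,g]$, and define forbidden sets
\[
U_i(\obb):=\psi_d^{-1}(V_d(B_{i-1}(\obb)))\cup\bigcup_{(e,D)\in I(B_{i-1}(\obb),C_0)}U_e(B_{i-1}(\obb),D).
\]
Writing $T:=\{\obb:\bb_i\notin U_i(\obb)\text{ for all }i\in[h+1,g]\}$ and $\phi(\obb):=B_g(\obb)$, the goal, exactly as in Lemma \ref{R24}, is to show that $T$ is large, that $\phi(T)\subseteq\Nc_d(A,B_0,C_0)$, and that $\phi$ has small fibres.

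First I would bound $R_i:=\{\obb:\bb_i\in U_i(\obb)\}$. Since $\bb_i\in A\cap C_0$ it suffices to bound $|C_0\cap A\cap U_i(\obb)|$. For each $(e,D)\in I(B_{i-1},C_0)$ Lemma \ref{R27} gives $|C_0\cap U_e(B_{i-1},D)|\leq 2d^2$, and there are at most $(d-1)2^{g-1}$ such pairs. The only genuinely new input is the term $\psi_d^{-1}(V_d(B_{i-1}))$, and for it I would prove the non-containment $\psi_d(C_0)\not\subseteq V_d(B_{i-1})$; granting this, one finds (as in Lemma \ref{R27}) a hyperplane $H\supseteq V_d(B_{i-1})$ with $\psi_d(C_0)\not\subseteq H$, so $C_0$ is not a component of $\psi_d^{-1}(H)\in\Cc_{\leq d}$ and $|C_0\cap\psi_d^{-1}(V_d(B_{i-1}))|\leq|C_0\cap\psi_d^{-1}(H)|\leq(d-f)d\leq d^2$ by Theorem \ref{R9}. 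To establish the non-containment, write $C_0=\Zc(p_0)$ with $p_0\in\Real_{d-f}[x,y]$ and count affine functionals on $\Real^{\binom{d+2}{2}-1}$: those vanishing on $V_d(B_{i-1})=\Fl(\psi_d(B_{i-1}))$ form a space of dimension $\geq\binom{d+2}{2}-(i-1)\geq 4$, while those vanishing on $\psi_d(C_0)$ are exactly the multiples $p_0r$ with $\deg r\leq f$; since $B_0\cap C_0=\emptyset$ (so $p_0$ is nonzero on $B_0$) and $B_0$ lies on no curve of $\Cc_{\leq f}$, no nonzero $p_0r$ can vanish on $\psi_d(B_0)\subseteq V_d(B_{i-1})$, so these two functional spaces meet only in $0$ and the first cannot sit inside the second. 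Altogether $|C_0\cap A\cap U_i(\obb)|\leq d^2+(d-1)2^{g-1}\cdot 2d^2\leq 2d^3 2^g$, hence $|R_i|\leq 2d^3 2^g|A\cap C_0|^{g-h-1}$; the hypothesis $|A\cap C_0|\geq d^52^{\binom{d+2}{2}}$ makes this at most $\tfrac{1}{2(g-h)}|A\cap C_0|^{g-h}$, and so $|T|\geq|A\cap C_0|^{g-h}-\sum_i|R_i|\geq\tfrac12|A\cap C_0|^{g-h}$.

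Next I would check $\phi(T)\subseteq\Nc_d(A,B_0,C_0)$ by verifying the hypotheses of Lemma \ref{R23} for $B=B_g(\obb)$. Conditions ii) and iii) are immediate from $B_0\cap C_0=\emptyset$ and $\bb_i\in C_0$. Because $\bb_i\notin\psi_d^{-1}(V_d(B_{i-1}))\supseteq B_{i-1}$, the new points are distinct (so $|B_g(\obb)|=g$) and the flats $V_d(B_h)\subsetneq\cdots\subsetneq V_d(B_g)$ grow strictly; starting from $\dim\psi_d(B_0)=h-1$ (which holds because $B_0$ is not contained in a curve of $\Cc_{\leq f}$, via Remark \ref{R12}) this gives $\dim\psi_d(B_g(\obb))=g-1=\binom{d+2}{2}-4$, i.e.\ condition i). Condition iv) follows by induction on $i$: the base case $B_h=B_0$ is Lemma \ref{R25}, and the inductive step is Lemma \ref{R22}, applicable since $\bb_i\notin U_i(\obb)$ in particular lies outside $\bigcup_{(e,D)\in I(B_{i-1},C_0)}U_e(B_{i-1},D)$. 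Condition v) is precisely the conclusion of Lemma \ref{R26}, whose remaining hypotheses are the already-secured ii) and iii). Thus Lemma \ref{R23} yields $B_g(\obb)\in\Nc_d(A,B_0,C_0)$.

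Finally, each $B\in\phi(T)$ satisfies $\phi^{-1}(B)\subseteq\{\obb:\{\bb_{h+1},\ldots,\bb_g\}=B\setminus B_0\}$, so $|\phi^{-1}(B)|\leq(g-h)!$; hence $|\Nc_d(A,B_0,C_0)|\geq|\phi(T)|\geq\tfrac{1}{(g-h)!}|T|\geq\tfrac{1}{2(g-h)!}|A\cap C_0|^{g-h}$, and $2(g-h)!\leq 2^{d+3}!$ (as $g-h<2^{d+2}$) delivers the claimed bound. I expect the main obstacle to be the middle step: guaranteeing that $B_g(\obb)$ has the full $g$ distinct points and spans a flat of dimension $\binom{d+2}{2}-4$, where the one delicate ingredient is the non-containment $\psi_d(C_0)\not\subseteq V_d(B_{i-1})$; once that is in hand, the rest is bookkeeping resting on Lemmas \ref{R22}, \ref{R23}, \ref{R25}, \ref{R26} and \ref{R27}.
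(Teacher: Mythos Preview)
Your proposal is correct and follows the same approach as the paper: the recursive chain starting from $B_0$, the forbidden sets $U_i$, the bound on $|R_i|$ via Lemma~\ref{R27}, and the verification of the hypotheses of Lemma~\ref{R23} through Lemmas~\ref{R22}, \ref{R25}, \ref{R26} all match. Your explicit inclusion of $\psi_d^{-1}(V_d(B_{i-1}))$ in $U_i$ is redundant (that set already sits inside every $U_e(B_{i-1},D)$ by definition), but your functional--counting argument for $\psi_d(C_0)\not\subseteq V_d(B_{i-1})$ cleanly makes explicit a step the paper glosses over when it asserts $\bb_i\notin\psi_d^{-1}(V_d(B_{i-1}))$.
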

        \begin{proof}
      Write $g:=\binom{d+2}{2}-3-\binom{f+2}{2}, A_0:=A\cap C_0$ and $B_0:=\left\{\bb_1,\bb_2,\ldots, \bb_{\binom{f+2}{2}}\right\}$. In this proof,  for any $\obb\in A_0^g$, its $i$-th entry will be denoted by $\bb_{\binom{f+2}{2}+i}$, i.e. $\obb=\left(\bb_{\binom{f+2}{2}+1},\bb_{\binom{f+2}{2}+2},\ldots, \bb_{\binom{d+2}{2}-3}\right)$. For any $\obb\in A_0^g$, write  $B_0(\obb):=B_0$. For all $i\in[1,g]$ and $\obb\in A_0^g$, write
      \begin{align*}
      B_i(\obb)&:=B_0\cup \left\{\bb_{\binom{f+2}{2}+1},\bb_{\binom{f+2}{2}+2},\ldots, \bb_{\binom{f+2}{2}+i}\right\}=\left\{\bb_{1},\bb_{2},\ldots, \bb_{\binom{f+2}{2}+i}\right\}\\
      U_i(\obb)&:=\bigcup_{(e,D)\in I(B_{i-1}(\obb),C_0)}U_e(B_{i-1}(\obb),D).
      \end{align*}
          For all $\obb\in A_0^g$, $i\in[1,g]$ and $(e,D)\in I(B_{i-1}(\obb),C_0)$, we have that $C_0\nsubseteq U_e(B_{i-1}(\obb),D)$. Thus, applying Lemma \ref{R27} to $C_0$ and $U_e(B_{i-1}(\obb),D)$, we get that 
      \begin{equation}
      \label{E114}
      |C_0\cap U_e(B_{i-1}(\obb),D)|\leq 2d^2.
      \end{equation}
             For each $i\in[1,g]$, write $R_i:=\left\{\obb\in A_0^g:\; \bb_{\binom{f+2}{2}+i}\in U_i(\obb)\right\}$. First we show that for all $i\in[1,g]$,
      \begin{equation}
      \label{E115}
      |R_i|\leq d^32^{\binom{d+2}{2}-2}|A_0|^{g-1}.
      \end{equation}
      For each $i\in[1,g], E\in \Pc\left(\left[1,\binom{f+2}{2}+i\right]\right)$ and $\obb\in A_0^g$, set $E(\obb):=\{\bb_j:\;j\in E\}$. Notice that $\Pc\left(B_{i}(\obb)\right)=\left\{E(\obb):\;E\in  \Pc\left(\left[1,\binom{f+2}{2}+i\right]\right)\right\}$. Now, for each $i\in[1,g], E\in \Pc\left(\left[1,\binom{f+2}{2}+i\right]\right)$ and $e\in[1,d-1]$, write 
\begin{equation*}
R_{i,e,E}:=\left\{\obb\in A^g_0:\, \bb_{\binom{f+2}{2}+i}\in U_e(B_{i-1}(\obb),E(\obb))\,\text{and}\,
 (e,E(\obb))\in I(B_{i-1}(\obb),C_0)\right\}
\end{equation*}      
  This definition gives that  for all $i\in[1,g]$, 
 \begin{equation*}
 R_i\subseteq \bigcup_{e=1}^{d-1}\bigcup_{E\in \Pc\left(\left[1,\binom{f+2}{2}+i\right]\right)}R_{i,e,E}
 \end{equation*}
  and thereby
      \begin{equation}
      \label{E116}
       |R_i|\leq \sum_{E\in \Pc\left(\left[1,\binom{f+2}{2}+i\right]\right)}\sum_{e=1}^{d-1}|R_{i,e,E}|.
      \end{equation}
  For each $i\in[1,g], E\in \Pc\left(\left[1,\binom{f+2}{2}+i\right]\right)$ and $e\in[1,d-1]$, notice that if $\obb\in R_{i,e,E}$ then $\bb_{\binom{f+2}{2}+i}\in U_e(B_{i-1}(\obb),E(\obb))\cap A_0\subseteq U_e(B_{i-1}(\obb),E(\obb))\cap C_0$; thus, by (\ref{E114}), $\obb\in R_{i,e,E}$ has at most $2d^2$ possible values in its $i$-th entry and therefore
  \begin{equation}
  \label{E117}
  |R_{i,e,E}|\leq 2d^2|A_0|^{g-1}.
  \end{equation}
             Since $\left|\Pc\left(\left[1,\binom{f+2}{2}+i\right]\right)\times [1,d-1]\right|\leq d2^{\binom{d+2}{2}-3}$, we have that (\ref{E116}) and (\ref{E117}) imply (\ref{E115}).

             Set $T:=\left\{\obb\in A_0^g:\; \bb_{\binom{f+2}{2}+i}\not\in U_i(\obb) \text{ for all }i\in[1,g]\right\}$ so $T=A_0^g\setminus \bigcup_{i=1}^gR_i$. From (\ref{E115}), 
\begin{equation}
\label{E118}
\sum_{i=1}^g|R_i|\leq gd^32^{\binom{d+2}{2}-2}|A_0|^{g-1}\leq d^52^{\binom{d+2}{2}-1}|A_0|^{g-1}. 
\end{equation}  
   Since $|A_0|\geq  d^52^{\binom{d+2}{2}}$ by assumption, we get from (\ref{E118}) that    
             \begin{equation}
             \label{E119}
             |T|=\left|A_0^g\setminus \bigcup_{i=1}^gR_i\right|\geq |A_0|^g-\sum_{i=1}^g|R_i|\geq|A_0|^{g} -d^52^{\binom{d+2}{2}-1}|A_0|^{g-1}\geq \frac{1}{2}|A_0|^{g}.
             \end{equation}
             The second step in the proof is to show that the map
              \begin{equation*}
              \phi:T\longrightarrow \Pc(A),\qquad \phi(\obb)=B_g(\obb)
              \end{equation*}
               satisfies that
      \begin{equation}
      \label{E120}
      \phi(T)\subseteq \Nc_d(A,B_0,C_0).
\end{equation} 
We claim that for all $\obb\in T$, $i\in[0,g]$ and $(e,D)\in I(B_i(\obb),C_0)$, we have that 
\begin{equation}
\label{E121}
\max\left\{\tau_e(B_i(\obb),D),\mu_e(B_i(\obb),D)\right\}<\binom{d+2}{2}.
\end{equation}    
We show (\ref{E121}) by induction on $i\in[0,g]$. For $i=0$, we apply Lemma \ref{R25} to $B_0=B_0(\obb)$. Assume that (\ref{E121}) holds for $i-1\geq 0$ and we show it for $i$. In so far as $\obb\in T$, note that $\bb_i\in A\setminus U_i(\obb)$. By induction,
\begin{equation*}
\max\left\{\tau_e(B_{i-1}(\obb),D),\mu_e(B_{i-1}(\obb),D)\right\}<\binom{d+2}{2}
\end{equation*}
for all  $(e,D)\in I\left(B_{i-1}(\obb),C_0\right)$ so $B_{i-1}(\obb)$ and $\bb_i\in A\setminus U_i(\obb)$ satisfy the assumptions of Lemma \ref{R22}, and hence this lemma implies that $B_{i-1}(\obb)\cup\{\bb_i\}=B_{i}(\obb)$ satisfies (\ref{E121}) completing the induction. In particular, (\ref{E121}) is true for $i=g$ so, for all $\obb\in T$ and $(e,D)\in I\left(B_{g}(\obb),C_0\right)$,
\begin{equation}
\label{E122}
\max\left\{\tau_e(B_g(\obb),D),\mu_e(B_g(\obb),D)\right\}<\binom{d+2}{2}.
\end{equation}
Now, for all $(e,D)\in ([1,d-1]\times \Pc(B_{g}(\obb)))\setminus I\left(B_{g}(\obb),C_0\right)$ such that $\alpha_e(D)\geq 0$, Lemma \ref{R26} implies that 
 \begin{align}
 \label{E123}
 \beta_e(B,D)&<0\\
  \label{E124}
  \gamma_e(B,D)&<\binom{d+2}{2}-\binom{d-e+2}{2}-2.
 \end{align}
  For all $\obb\in T$ and $i\in[1,g]$, we get that $\bb_{\binom{f+2}{2}+i}\not\in U_i(\obb)$; in particular, $\bb_{\binom{f+2}{2}+i}\not\in\\\psi^{-1}_d(V_d(B_{i-1}(\obb)))$. Then the flats $V_d(B_0(\obb))\subsetneq V_d(B_1(\obb))\subsetneq\ldots \subsetneq V_d(B_g(\obb))$ are contained properly so 
  \begin{equation}
 \label{EE125}
\dim V_d(B_g(\obb))\geq g+\dim V_d(B_0(\obb)).
 \end{equation}
  Since $B_0=B_0(\obb)$ is not contained in an element of $\Cc_{\leq f}$, we get that $\psi_f(B_0(\obb))$ is not contained in a hyperplane of $\Real^{\binom{f+2}{2}-1}$ so $\dim \psi_f(B_0(\obb))=\binom{f+2}{2}-1$. From Remark \ref{R12}.iii, we get that $\dim \psi_d(B_0(\obb))\geq \dim \psi_f(B_0(\obb))$ so 
  \begin{equation*}
  \dim V_d(B_0(\obb))=\dim \psi_d(B_0(\obb))\geq \dim \psi_f(B_0(\obb))=\binom{f+2}{2}-1,
  \end{equation*}
  and then (\ref{EE125}) gives   
 \begin{equation}
 \label{E125}
\dim V_d(B_g(\obb))\geq \binom{d+2}{2}-4.
 \end{equation}
 On the other hand, 
 \begin{equation*}
 \dim V_d(B_g(\obb))=\dim \Fl(\psi_d(B_g(\obb)))\leq |\psi_d(B_g(\obb))|-1=|B_g(\obb)|-1=\binom{d+2}{2}-4
 \end{equation*}
 so (\ref{E125}) leads to 
  \begin{equation}
 \label{E126}
 \dim \psi_d(B_g(\obb))=\dim V_d(B_g(\obb))=\binom{d+2}{2}-4.
 \end{equation}
 We can apply Lemma \ref{R23} with $B=B_g(\obb)$, $B_0$ and $C_0$ because its assumptions are satisfied (indeed: i) holds by (\ref{E126}); ii) and iii) hold by the construction of $B_g(\obb)$; iv) holds by (\ref{E122}); v) holds by (\ref{E123}) and (\ref{E124})).   Hence Lemma \ref{R23} implies that $B_g(\obb)\in\Nc_d(A, B_0, C_0)$ and this shows (\ref{E120}).
 
Finally, for each $B\in \Nc_d(A, B_0, C_0)$, $|\phi^{-1}(B)|\leq g!$  which is the number of permutations of the elements in $B\setminus B_0$. Therefore (\ref{E119}) and (\ref{E120}) give
 \begin{equation}
 \label{E127}
 |\Nc_d(A, B_0, C_0)|\geq \frac{1}{g!}|T|\geq  \frac{1}{g!2}|A_0|^g.
\end{equation} 
Since $g\leq \binom{d+2}{2}-3<2^{d+2}$, we have that $g!2\leq 2^{d+3}!$  and hence (\ref{E127}) completes the proof. 
      \end{proof}
        \section{From curves to lines}
      In this section we use the elements of the families $\Nc_d(A)$ to generate flats that will be used to construct hyperprojections  which are used to transform the problem of finding curves of degree $d$ with few points   into a problem of finding ordinary lines  which avoid a finite set. 
      
     Let $d\in\Nat, e\in[1,d-1]$ and  $B\in\Pc(\Real^2)$. Write
\begin{align*}
\Cc_{d,e}(B)&:=\left\{C\in\Cc_e:\; |C\cap B|=\binom{d+2}{2}-\binom{d-e+2}{2}-1\right\}\\
\Cc_d(B)&:=\bigcup_{f=1}^{d-1}\Cc_{d,f}(B).
\end{align*}     
\begin{lem}
\label{R29}
 Let $d\in\Nat$ and $e\in[1,d-1]$. For any $B\in\Nc_d(\Real^2)$, 
 \begin{equation*}
 |\Cc_{d,e}(B)|<2^{2^{d+2}}.
 \end{equation*}
\end{lem}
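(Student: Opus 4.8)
The plan is to bound $|\Cc_{d,e}(B)|$ by showing that the trace map $C\mapsto B\cap C$ is injective on $\Cc_{d,e}(B)$ and that its image consists of subsets of $B$ of one fixed size. Write $k_e:=\binom{d+2}{2}-\binom{d-e+2}{2}-1$, so that every $C\in\Cc_{d,e}(B)$ has $|B\cap C|=k_e$ and hence $B\cap C\in\Pc_{k_e}(B)$. Once injectivity is established, $|\Cc_{d,e}(B)|\leq \binom{|B|}{k_e}\leq 2^{|B|}=2^{\binom{d+2}{2}-3}$, and since $\binom{d+2}{2}-3<2^{d+2}$ (the estimate used repeatedly in Section 3) this yields $2^{\binom{d+2}{2}-3}\leq 2^{2^{d+2}-1}<2^{2^{d+2}}$, as claimed.

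The whole difficulty is therefore concentrated in the following claim: for every $C\in\Cc_{d,e}(B)$, writing $S:=B\cap C$, one has $\dim\psi_e(S)=\binom{e+2}{2}-2$. Granting this, fix $p(x,y)\in\Real_e[x,y]$ with $C=\Zc(p(x,y))$; by Remark \ref{R12} we have $\psi_e(S)=\psi_e(B)\cap\tau_e([p(x,y)])$, so $\Fl(\psi_e(S))\subseteq\tau_e([p(x,y)])$, and since both are flats of dimension $\binom{e+2}{2}-2$ they coincide. Thus the hyperplane $\tau_e([p(x,y)])$ is recovered from $S$ alone, and as $\tau_e$ is a bijection (Remark \ref{R12}) so is the class $[p(x,y)]$; hence $C=\sigma_e([p(x,y)])$ is determined by $S$, which is exactly the injectivity needed.

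To prove the claim I would argue by contradiction. If $\dim\psi_e(S)\leq\binom{e+2}{2}-3$, then $\Fl(\psi_e(S))$ is contained in infinitely many hyperplanes of $\Real^{\binom{e+2}{2}-1}$; since by Lemma \ref{R11} only finitely many of them can represent any single curve, I can pick one, $H_2$, with $C_2:=\psi_e^{-1}(H_2)\in\Cc_{\leq e}$ distinct from $C$ and still containing $S$. The first key point is that $C_2$ is forced into $\Cc_{d,e}(B)$ with the same trace: since $S\subseteq B\cap C_2$ we have $|B\cap C_2|\geq k_e$, while property ii) of $\Nc_d$ applied to $C_2$ gives $|B\cap C_2|<\binom{d+2}{2}-\binom{d-\deg C_2+2}{2}$, so a degree $\deg C_2<e$ would force $|B\cap C_2|<k_e$, a contradiction; hence $\deg C_2=e$ and $B\cap C_2=S$. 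Now $S\subseteq C\cap C_2$ with $C\neq C_2$ both of degree $e$. Splitting off the common part $C_0$ (of degree $e_0$) of $C$ and $C_2$ via Remark \ref{R10} and applying the weak Bezout theorem (Theorem \ref{R9}) to the coprime remainders (each of degree $e-e_0$) yields $k_e=|S|\leq |B\cap C_0|+(e-e_0)^2$. Combined with property ii) in the form $|B\cap C_0|\leq\binom{d+2}{2}-\binom{d-e_0+2}{2}-1$ and the elementary identity $\binom{d-e_0+2}{2}-\binom{d-e+2}{2}=\tfrac12(e-e_0)\big(2(d-e)+(e-e_0)+3\big)$, this is contradictory whenever $e-e_0<2(d-e)+3$; in particular it settles every $e$ with $3e<2d+3$ (taking $C_0=\emptyset$ when $C,C_2$ are coprime).

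The main obstacle is the complementary regime, $e$ close to $d$ (so $d-e$ small), where the crude Bezout count can be saturated by a genuinely shared component $C_0$ of large degree carrying almost all of $S$. Here I expect the coarse argument above to be insufficient and the finer dimension hypotheses of $\Nc_d$ to become essential: when the shared component satisfies $|B\cap C_0|=\binom{d+2}{2}-\binom{d-e_0+2}{2}-1$ one may feed it into property iii) to conclude that $\psi_{d-e_0}(B\setminus C_0)$ is affinely independent, and then contrast this with the fact that $\psi_{d-e_0}(S\setminus C_0)$ lies in the codimension-$2$ flat cut out by the two coprime degree-$(e-e_0)$ curves through $S\setminus C_0$; when $|B\cap C_0|$ is strictly smaller, property iv) should play the analogous role, with Lemma \ref{R17} controlling the intersections of $B$ with the curves built from $C_0$. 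Making this last step precise—ruling out the tight, shared-component configuration using iii), iv) and Lemma \ref{R17}—is where I anticipate the real work lies.
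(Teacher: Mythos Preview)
Your overall plan --- show the trace map $C\mapsto B\cap C$ is injective, then bound by $2^{|B|}<2^{2^{d+2}}$ --- is exactly the paper's, and your reduction to the claim $\dim\psi_e(S)=\binom{e+2}{2}-2$ for $S=B\cap C$ is equivalent to what the paper proves. The gap is in how you prove this claim: your Bezout route is unnecessarily hard and, as you yourself note, left unfinished in the regime $e$ close to $d$.

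The paper's argument for the claim is a one-liner using only property~ii), and it bypasses Bezout, properties~iii)--iv), and Lemma~\ref{R17} entirely. The missing observation is this: property~ii) says $|B\cap C'|<\binom{d+2}{2}-\binom{d-f+2}{2}\leq|B|$ for every $C'\in\Cc_f$ with $f\in[1,d-1]$, so in particular no curve of degree at most $e$ contains $B$, and hence $\dim\psi_e(B)=\binom{e+2}{2}-1$ is full. Now if $\dim\psi_e(S)\leq\binom{e+2}{2}-3$, pick any point of $\psi_e(B)$ outside $\Fl(\psi_e(S))$ (such a point exists by full-dimensionality) and pass a hyperplane $H$ through both $\Fl(\psi_e(S))$ and this point. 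Then $\psi_e^{-1}(H)\in\Cc_{\leq e}$ satisfies
\[
|B\cap\psi_e^{-1}(H)|=|H\cap\psi_e(B)|\geq |S|+1=\binom{d+2}{2}-\binom{d-e+2}{2},
\]
which already contradicts property~ii) applied to $\psi_e^{-1}(H)$ (at whatever degree it has, since the bound in ii) is monotone in the degree). That is the whole proof of the claim.

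In short: do not pick a second hyperplane $H_2$ giving a curve with the \emph{same} trace and then fight with Bezout; pick a hyperplane $H$ through $S$ that captures \emph{strictly more} points of $B$, and property~ii) kills it immediately. Your ``main obstacle'' simply does not arise.
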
 
   \begin{proof}
   Since $B\in \Nc_d(\Real^2)$, we have that  for any $f\in [1,d-1]$ and  $C\in\Cc_f$,
   \begin{equation*}
   |C\cap B|<\binom{d+2}{2}-\binom{d-f+2}{2}\leq \binom{d+2}{2}-3=|B|;
   \end{equation*}
   in particular, this means that there is no element  $C\in\Cc_{\leq e}$ which contains $B$. Thus, from Remark \ref{R12}.ii, no hyperplane in $\Real^{\binom{e+2}{2}-1}$ can contain $\psi_e(B)$ and therefore
   \begin{equation}
   \label{E128}
   \dim \psi_e(B)=\binom{e+2}{2}-1.
   \end{equation}
  Set the map
  \begin{equation*}
  \phi:\Cc_{d,e}(B)\longrightarrow \Pc_{\binom{d+2}{2}-\binom{d-e+2}{2}-1}(B),\qquad \phi(C)=C\cap B
  \end{equation*}
  We will show that $\phi$ is injective. Take $C_1,C_2\in\Cc_{d,e}(B)$ such that $ C_1\cap B=C_2\cap B$.   Write $F:=\Fl(\psi_e(C_1\cap B))=\Fl(\psi_e(C_2\cap B))$. Because  $C_1,C_2\in\Cc_{\leq e}$, we have that $\Fl(\psi_e(C_1))$ and $\Fl(\psi_e(C_2))$ are contained in hyperplanes by Remark \ref{R12}.ii. For $i\in\{1,2\}$,
  \begin{equation}
  \label{E129}
  F\subseteq \Fl(\psi_e(C_i\cap B))\subseteq \Fl(\psi_e(C_i)) 
  \end{equation}
 so $F$ is a proper flat in $\Real^{\binom{e+2}{2}-1}$. We claim that $F$ is a hyperplane. If $F$ is not a hyperplane, then (\ref{E128}) implies that we can choose a  hyperplane $H$ in $\Real^{\binom{e+2}{2}-1}$ such that $H\supseteq F$ and 
\begin{equation}
\label{E130}
|H\cap \psi_e(B)|>|F\cap \psi_e(B)|.
\end{equation}  
  However,
  \begin{align*}
  \binom{d+2}{2}-\binom{d-e+2}{2}-1&\geq |\psi_e^{-1}(H)\cap B|&\Big(\text{since $B\in\Nc_d(\Real^2)$}\Big)\\
  &=|H\cap \psi_e(B)| \\
  &>|F\cap \psi_e(B)|&\Big(\text{by  (\ref{E130}})\Big)\\
  &\geq |C_1\cap B|\\
  &=\binom{d+2}{2}-\binom{d-e+2}{2}-1,&\Big(\text{since $C_1\in\Cc_{d,e}(B)$}\Big)
    \end{align*}
  and this contradiction shows that $F$ is a hyperplane. Hence, since $F$ is a hyperplane, (\ref{E129}) leads to 
  \begin{equation*}
  \Fl(\psi_e(C_1))=F= \Fl(\psi_e(C_2)),
\end{equation*}   
 and then Remark \ref{R12}.ii yields that $C_1=C_2$ implying that $\phi$ is injective. In so far as $\phi$ is injective,
 \begin{equation}
 \label{E131}
  |\Cc_{d,e}(B)|\leq \left|\Pc_{\binom{d+2}{2}-\binom{d-e+2}{2}-1}(B)\right|\leq |\Pc(B)|=2^{|B|}=2^{\binom{d+2}{2}-3}.
 \end{equation}
 Using that $\binom{d+2}{2}-3<2^{d+2}$, the claim follows from (\ref{E131}).
\end{proof}    
Let $d\in\Nat$. As usual, $\Pro^d:=(\Real^{d+1}\setminus\{\ori\})/\sim$ with $\yb\sim \zb$ if there is $r\in\Real$ such that $\yb=r\cdot \zb$. We will use the embedding 
\begin{equation*}
\Real^d\longrightarrow \Pro^d,\qquad (z_1,z_2,\ldots,z_d)\mapsto [1:z_1:z_2:\ldots:z_d]
\end{equation*}
 so that $\Real^d$ can be seen as a subset of $\Pro^d$. For any flat $F$ in $\Real^d$ defined by a family of linear equations $\left\{r_{0,i}+\sum_{j=1}^dr_{j,i}x_j\right\}_{i\in I}$ (i.e. $F=\bigcap_{i\in I}\Zc\left(r_{0,i}+\sum_{j=1}^dr_{j,i}x_j\right)$), its \emph{homogenization} will be the subset of elements $[z_0:z_1:\ldots:z_d]\in \Pro^d$ such that  $\sum_{j=0}^dr_{j,i}z_j=0$ for all $i\in I$, and we will denote it by $F^h$. The next standard facts about homogenization can be found in \cite[Sec.I.2]{Ha}.
 
 \begin{rem}
\label{R30}
Let $d\in \Nat$, $e\in[1,d]$ and $F$ be an $e$-dimensional flat in $\Real^d$.
\begin{enumerate}
\item[i)]Then $F^h$ is an $e$-dimensional linear variety in $\Pro^d$, and $F^h\cong \Pro^e$.
\item[ii)]Let $f\in[1,d]$ and  $G$ be an $f$-dimensional flat in $\Real^d$. If $e+f-d\geq 0$, then $F^h\cap G^h\neq\emptyset$ and  $F^h\cap G^h$ is a $g$-dimensional linear variety in $\Pro^d$ with $g\geq e+f-d$.
\end{enumerate}
\end{rem}

 Let $d\in\Nat, e\in [0,d-1]$ and $F$ be an $e$-dimensional flat in $\Real^d$. Take $G$ a $d-e-1$-dimensional flat in $\Real^d$ such that $F\cap G=\emptyset$. For any $\zb\in \Real^d\setminus F$, notice that $\dim \Fl(F\cup\{\zb\})=e+1$ and  $\dim \Fl(F\cup\{\zb\})\cap G\leq 0$ since $F\cap G=\emptyset$. On the other hand, Remark \ref{R30}.ii implies that  $\Fl(F\cup\{\zb\})^h\cap G^h$ is a linear variety of dimension at least $0$. Therefore $\Fl(F\cup\{\zb\})^h\cap G^h$ is exactly a point. Identifying $G^h$ with $\Pro^{d-e-1}$, we define the \emph{hyperprojection centered in $F$} as the map 
\begin{equation*}
\pi_F:\Real^d\setminus F\longrightarrow \Pro^{d-e-1},\qquad \{\pi_{F}(\zb)\}=\Fl(F\cup\{\zb\})^h\cap \Pro^{d-e-1}.
\end{equation*}
An easy consequence of the the definition of $\pi_F$ is the following remark.
\begin{rem}
\label{R31}
 Let $d\in\Nat, e\in [0,d-1]$ and $F$ be an $e$-dimensional flat in $\Real^d$. For any $f\in [e+1,d]$, there is an bijective relation between the $f$-flats  containing $F$ and the $f-e-1$-dimensional linear varieties in $\Pro^{d-e-1}$ given by $H\mapsto \pi_F(H\setminus F)$.
\end{rem}
Let  $B\in\Nc_d(\Real^2)$. Recall that $V_d(A)=\Fl(\psi_d(A))$ for any  $A\in\Pc(\Real^2)$. Write 
\begin{align*}
D_d(B)&:=\psi^{-1}_d(V_d(B))\\
E_d(B)&:=D_d(B)\cup \bigcup_{C\in\Cc_d(B)}\psi^{-1}_d(V_d(C\cup B)),
\end{align*}
 and the map
 \begin{equation*}
 \varphi_{d,B}:\Real^2\setminus D_d(B)\longrightarrow  \Pro^{2},\qquad \varphi_{d,B}(\ab)=\pi_{V_d(B)}(\psi_d(\ab));
 \end{equation*}
  in other words, $\varphi_{d,B}=\pi_{V_d(B)}\circ \psi_d|_{\Real^2\setminus D_d(B)}$. The main properties of $\varphi_{d,B}$ are proven in the next lemma. 
  
\begin{lem}
\label{R32}
Let $d\in\Nat$ and $B\in\Nc_d(\Real^2)$. 
\begin{enumerate}
\item[i)]For all $e\in[1,d-1]$ and $C\in \Cc_{d,e}(B)$, we have that $|\varphi_{d,B}(C\setminus D_d(B))|=1$.
\item[ii)]For all $\ab\in\Real^2\setminus E_d(B)$, we get that $\varphi_{d,B}(\ab)\not\in\varphi_{d,B}(E_d(B)\setminus D_d(B))$.
\item[iii)]For all $\ab\in\Real^2\setminus E_d(B)$, we have  that $\left|\varphi_{d,B}^{-1}\left(\varphi_{d,B}(\ab)\right)\right|\leq d^2-|D_d(B)|$.
\end{enumerate}
\end{lem}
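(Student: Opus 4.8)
The plan is to reduce everything to the description of the fibers of the hyperprojection $\pi_{V_d(B)}$. Write $N:=\binom{d+2}{2}-1$, so $V_d(B)$ is an $(N-3)$-flat (property i) of $\Nc_d$) and $\pi_{V_d(B)}$ maps into $\Pro^2$. Applying Remark \ref{R31} to $F=V_d(B)$ with $f=N-2$ yields a bijection between the $(N-2)$-flats $H\supseteq V_d(B)$ and the points of $\Pro^2$, via $H\mapsto\pi_{V_d(B)}(H\setminus V_d(B))$. In particular $\pi_{V_d(B)}$ is constant on each $H\setminus V_d(B)$ and distinct flats give distinct points, so every fiber of $\varphi_{d,B}$ has the form $\psi_d^{-1}(H)\setminus D_d(B)$ for a unique $(N-2)$-flat $H\supseteq V_d(B)$. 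All three parts of the lemma are statements about these fibers.

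The technical heart is the identity $\dim V_d(B\cup C)=N-2$ for $C\in\Cc_{d,e}(B)$, which is the codimension-two analogue of Lemma \ref{R18}. First note that property ii) of $\Nc_d$ forces the defining polynomial of $C$ to be square-free of degree exactly $e$ (otherwise $C\in\Cc_{e''}$ for some $e''<e$, and the bound on $|B\cap C|$ in ii) would be violated for the pair $(e'',C)$). Hence a polynomial of degree $\leq d$ vanishes on $B\cup C$ if and only if it is a multiple $q\,s$ of the square-free polynomial $q$ defining $C$, with $s$ of degree $\leq d-e$ vanishing on $B\setminus C$; combining this with $\dim\psi_{d-e}(B\setminus C)=\binom{d-e+2}{2}-3$ from property iii) shows that the space of linear forms on $\Real^N$ vanishing on $\psi_d(B\cup C)$ has dimension exactly $2$, i.e. $\dim V_d(B\cup C)=N-2$ (alternatively one adapts the proof of Lemma \ref{R18} to codimension two). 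Since $N-2>N-3$ we get $C\nsubseteq D_d(B)$, so $C\setminus D_d(B)\neq\emptyset$; and since $V_d(B\cup C)$ is an $(N-2)$-flat through $V_d(B)$ containing $\psi_d(C)$, the set $\psi_d(C\setminus D_d(B))$ lies in the single fiber $V_d(B\cup C)\setminus V_d(B)$. This proves i).

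Part ii) is then immediate. If $\cb\in E_d(B)\setminus D_d(B)$, then $\psi_d(\cb)\in V_d(C\cup B)$ for some $C\in\Cc_d(B)$, and by i) this $V_d(C\cup B)$ is precisely the $(N-2)$-flat carrying the fiber through $\cb$. Thus $\varphi_{d,B}(\ab)=\varphi_{d,B}(\cb)$ would force $\psi_d(\ab)\in V_d(C\cup B)$, i.e. $\ab\in\psi_d^{-1}(V_d(C\cup B))\subseteq E_d(B)$, contradicting $\ab\notin E_d(B)$.

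For iii), set $H_\ab:=\Fl(V_d(B)\cup\{\psi_d(\ab)\})$, an $(N-2)$-flat through $V_d(B)$ because $\ab\notin D_d(B)$; then $\varphi_{d,B}^{-1}(\varphi_{d,B}(\ab))=\psi_d^{-1}(H_\ab)\setminus D_d(B)$. Choose distinct hyperplanes $H_1,H_2\supseteq H_\ab$, so $H_1\cap H_2=H_\ab$, and put $C_i:=\psi_d^{-1}(H_i)\in\Cc_{\leq d}$ (Remark \ref{R12}.ii), whence $\psi_d^{-1}(H_\ab)=C_1\cap C_2$. If $C_1,C_2$ have no common irreducible component, Theorem \ref{R9} gives $|C_1\cap C_2|\leq d^2$, and since $D_d(B)=\psi_d^{-1}(V_d(B))\subseteq C_1\cap C_2$ the fiber has at most $d^2-|D_d(B)|$ points, as claimed. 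The main obstacle, and the step where the hypothesis $\ab\notin E_d(B)$ is essential, is to rule out a common component $C^*\in\Cc_{e^*}$: such a $C^*$ satisfies $\psi_d(C^*)\subseteq H_\ab$, hence $V_d(B\cup C^*)\subseteq H_\ab$ has dimension $\leq N-2$. If $e^*=d$ this is impossible, because an irreducible curve of degree $d$ lies in a unique hyperplane of $\Real^N$ (Remark \ref{R12}.ii and Lemma \ref{R11}), so $\psi_d(C^*)$ spans an $(N-1)$-flat. If $e^*\leq d-1$, property ii) of $\Nc_d$ leaves two cases: when $|B\cap C^*|=\binom{d+2}{2}-\binom{d-e^*+2}{2}-1$ we have $C^*\in\Cc_{d,e^*}(B)$, so by i) $V_d(B\cup C^*)=H_\ab$ and $\psi_d(\ab)\in V_d(C^*\cup B)$ places $\ab$ in $E_d(B)$, a contradiction; when $|B\cap C^*|$ is strictly smaller, property iv) gives $\dim\psi_{d-e^*}(B\setminus C^*)\geq\binom{d-e^*+2}{2}-2$, so Lemma \ref{R18} forces $\dim V_d(B\cup C^*)\geq N-1>N-2$, again a contradiction. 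This refutes the common component and completes the proof.
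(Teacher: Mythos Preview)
Your argument follows the paper's strategy closely: describe fibers of $\varphi_{d,B}$ as preimages of $(N-2)$-flats through $V_d(B)$, show $V_d(B\cup C)$ is such a flat for each $C\in\Cc_d(B)$, and in iii) use B\'ezout together with the $|B\cap C^*|$ case split to rule out a common component. Parts ii) and iii) are essentially the paper's proof.

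The gap is in your primary argument for i). You claim that a polynomial $p$ of degree $\le d$ vanishes on $B\cup C$ if and only if $p=qs$ with $s$ of degree $\le d-e$ vanishing on $B\setminus C$, where $q$ is the square-free degree-$e$ polynomial defining $C$. The injection $s\mapsto qs$ is fine and already gives $\dim V_d(B\cup C)\le N-2$; the problem is surjectivity, which needs: $p$ vanishes on $C=\Zc(q)\Rightarrow q\mid p$. Over $\Real$ this fails whenever $q$ has an irreducible factor with finite real zero set (e.g.\ $x^2+y^2$, whose only real zero is the origin, yet $x$ vanishes there without being a multiple). Your minimal-degree argument excludes factors with \emph{empty} real locus, but not factors whose real locus is a nonempty finite set, so the ``only if'' direction is unjustified as written. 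The paper sidesteps this entirely via the trick you mention parenthetically: it picks $\ab\notin\psi_{d-e}^{-1}(V_{d-e}(B\setminus C))\cup\psi_d^{-1}(V_d(B\cup C))$, applies Lemma~\ref{R18}.ii to $B\cup\{\ab\}$ to get $\dim\psi_d((B\cup\{\ab\})\cup C)=N-1$, and then subtracts one since $\psi_d(\ab)\notin V_d(B\cup C)$. You should make this the main argument rather than a parenthetical alternative.

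A smaller point in iii): to exclude $e^*=d$ you assert that $\psi_d(C^*)$ spans an $(N-1)$-flat, citing Remark~\ref{R12}.ii and Lemma~\ref{R11}. Those only say that $C^*$ is the zero set of a unique class $[q]$, not that $\psi_d(C^*)$ lies in a unique hyperplane. The clean argument is that ``common component'' means the irreducible $q$ divides both defining polynomials $p_i=\tau_d^{-1}(H_i)$; since $\deg q=d\ge\deg p_i$, each $p_i$ is a scalar multiple of $q$, hence $H_1=H_2$, contradicting their choice as distinct hyperplanes.
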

\begin{proof}
Since $B\in \Nc_d(\Real^2)$, notice that for all $e\in [1,d-1]$ and $C\in \Cc_{d,e}(B)$,
\begin{equation}
\label{E132}
\dim\psi_{d-e}(B\setminus C)=\binom{d-e+2}{2}-3;
\end{equation}
in particular, (\ref{E132}) implies there is a hyperplane $H$ in $\Real^{\binom{d-e+2}{2}-1}$ such that $\psi_{d-e}(B\setminus C)\subseteq H$ and therefore $B\setminus C\subseteq \psi_{d-e}^{-1}(H)$. In so far as 
$\psi_{d-e}^{-1}(H)\in \Cc_{\leq d-e}$ and $C\in\Cc_e$, we get that $\psi_{d-e}^{-1}(H)\cup C\in \Cc_{\leq d}$ by Remark \ref{R10}.i. Thus, insomuch as 
\begin{equation*}
B\cup C=(B\setminus C)\cup C\subseteq \psi_{d-e}^{-1}(H)\cup C, 
\end{equation*}
Remark \ref{R12}.ii implies that $\psi_d(B\cup C)$ is contained in hyperplane $K$ of $\Real^{\binom{d+2}{2}-1}$ such that $\psi_d^{-1}(K)=\psi_{d-e}^{-1}(H)\cup C$; in particular,
\begin{equation}
\label{E133}
\dim\psi_{d}(B\cup C)\leq \binom{d+2}{2}-2.
\end{equation}

First we prove i). To show i), it suffices to prove that
\begin{equation}
\label{E134}
\dim \psi_d(B\cup C)=\binom{d+2}{2}-3
\end{equation} 
because $\pi_{V_d(B)}$ projects  $\binom{d+2}{2}-3$-flats which contain $V_d(B)=\Fl(\psi_d(B))$ into points of $\Pro^2$. From (\ref{E132}),  $V_{d-e}(B\setminus C)$ is a proper flat in $\Real^{\binom{d-e+2}{2}-1}$,  and from  (\ref{E133}), $V_{d}(B\cup C)$ is a proper flat in $\Real^{\binom{d+2}{2}-1}$; therefore
\begin{equation*}
\Real^2\setminus(\psi_{d-e}^{-1}(V_{d-e}(B\setminus C))\cup \psi_{d}^{-1}(V_{d}(B\cup C)))\neq \emptyset.
\end{equation*}
 Fix $\ab\in \Real^2\setminus(\psi_{d-e}^{-1}(V_{d-e}(B\setminus C))\cup \psi_{d}^{-1}(V_{d}(B\cup C)))$. Since $\ab\not\in \psi_{d-e}^{-1}(V_{d-e}(B\setminus C))$, we get from (\ref{E132}) that 
  \begin{equation}
\label{E135}
\dim\psi_{d-e}( (B\cup\{\ab\})\setminus C)=\dim\psi_{d-e}(\{\ab\}\cup (B\setminus C))=\binom{d-e+2}{2}-2.
\end{equation}
Now we apply Lemma \ref{R18}.ii to $B\cup\{\ab\}$ so (\ref{E135}) yields
\begin{equation}
\label{E136}
 \dim\psi_{d}((B\cup\{\ab\})\cup C)=\binom{d+2}{2}-2.
\end{equation}
In so far as  $\ab\not\in \psi_{d}^{-1}(V_{d}(B\cup C))$, (\ref{E136}) leads to (\ref{E134}).

We prove ii) by contradiction. Assume that there is $\ab\in\Real^2\setminus E_d(B)$ such that $\varphi_{d,B}(\ab)\in\varphi_{d,B}(E_d(B)\setminus D_d(B)))$. Therefore there are $e\in[1,d-1]$ and $C\in\Cc_{d,e}(B)$ such that $\varphi_{d,B}(\ab)=\varphi_{d,B}(C\setminus D_d(B)))$. This equality means that $\psi_d(\ab)$ is in the $\binom{d+2}{2}-3$-flat $V_d(B\cup C)=\Fl(\psi_d(B\cup C))$ so 
\begin{equation*}
\ab\in \psi^{-1}_d(V_d(B\cup C))\subseteq E_d(B),
\end{equation*}
 which contradicts the assumption. 
 
 Finally, we show iii) by contradiction. Assume that there is $\ab\in\Real^2\setminus E_d(B)$ such that 
\begin{equation}
\label{E137}
\left|\varphi_{d,B}^{-1}\left(\varphi_{d,B}(\ab)\right)\right|+|D_d(B)|> d^2.
\end{equation} 
 Since $B\in \Nc_d(\Real^2)$, we have that $\dim \psi_d(B)=\binom{d+2}{2}-4$. Now, in so far as $\ab\not\in D_d(B)=\psi^{-1}_d(\Fl(\psi_d(B)))$, we get that 
\begin{equation}
\label{E138}
\dim \psi_d(\{\ab\}\cup B)=\binom{d+2}{2}-3;
\end{equation} 
 thus $V_d(\{\ab\}\cup B)=\Fl(\psi_d(\{\ab\}\cup B))$ is a $\binom{d+2}{2}-3$-flat projected to $\varphi_{d,B}(\ab)$ by $\pi_{V_d(B)}$. Since $V_d(\{\ab\}\cup B)$ is a $\binom{d+2}{2}-3$-flat, there are $H_1$ and $H_2$ distinct hyperplanes in $\Real^{\binom{d+2}{2}-1}$ such that $H_1\cap H_2=V_d(\{\ab\}\cup B)$; write $C_1:=\psi_d^{-1}(H_1)$ and $C_2:=\psi_d^{-1}(H_2)$ so that 
 \begin{equation}
 \label{E139}
 C_1\cap C_2=\psi_d^{-1}(H_1\cap H_2)=\psi_d^{-1}(V_d(\{\ab\}\cup B)),
 \end{equation}
  and therefore 
  \begin{equation}
   \label{E140}
   |C_1\cap C_2|=|\psi_d^{-1}(V_d(\{\ab\}\cup B))|\geq \left|\varphi_{d,B}^{-1}\left(\varphi_{d,B}(\ab)\right)\right|+|D_d(B)|.
  \end{equation}
  On the other hand,  Remark \ref{R12}.ii leads to $C_1,C_2\in\Cc_{\leq d}$.  We claim that $C_1$ and $C_2$ share an irreducible component. If this claim were false, then  Theorem \ref{R9} would give $ |C_1\cap C_2|\leq d^2$;  however, this contradicts the inequality $|C_1\cap C_2|>d^2$ which is a consequence of  (\ref{E137}) and (\ref{E140}). Therefore there is an irreducible curve $C_0\in\Cc_e$  for some $e\in[1,d-1]$ such that $C_0\subseteq C_1\cap C_2$. From (\ref{E139}), $C_0\subseteq \psi_d^{-1}(V_d(\{\ab\}\cup B))$ so $\psi_d(C_0)\subseteq V_d(\{\ab\}\cup B)=\Fl(\psi_d(\{\ab\}\cup B))$; this and (\ref{E138}) yield
  \begin{equation}
  \label{E142}
 \dim \psi_d(\{\ab\}\cup B\cup C_0)=\dim \psi_d(\{\ab\}\cup B)=\binom{d+2}{2}-3.
  \end{equation}
  Since $B\in\Nc_d(\Real^2)$, notice that
  \begin{equation*}
  |B\cap C_0|\leq \binom{d+2}{2}-\binom{d-e+2}{2}-1.
  \end{equation*}
 Thus  we have two cases.
\begin{enumerate}
\item[$\bullet$] If $|B\cap C_0|= \binom{d+2}{2}-\binom{d-e+2}{2}-1$, then $C_0\in\Cc_{d,e}(B)$. From (\ref{E134}), $\dim \psi_d(B\cup C_0)=\binom{d+2}{2}-3$; however, since $\ab\not\in \psi^{-1}_d(V_d(B\cup C_0))$ (because  $\ab\in\Real^2\setminus E_d(B)$), we get that 
  \begin{equation*}
\dim \psi_d(\{\ab\}\cup B\cup C_0)>\dim \psi_d(B\cup C_0)=\binom{d+2}{2}-3
  \end{equation*}
   contradicting (\ref{E142}).

\item[$\bullet$]If $|B\cap C_0|< \binom{d+2}{2}-\binom{d-e+2}{2}-1$, then (because $B\in\Nc_d(\Real^2)$) we have that  
\begin{equation}
\label{E143}
\dim\psi_{d-e}(B\setminus C_0)>\binom{d-e+2}{2}-3.
\end{equation}   
Using (\ref{E143}) and Lemma \ref{R18}, we conclude that $\dim\psi_{d}(B\cup C_0)>\binom{d+2}{2}-3$ but (\ref{E142}) gives
\begin{equation*}
\binom{d+2}{2}-3=\dim \psi_d(\{\ab\}\cup B\cup C_0)\geq \dim \psi_d(B\cup C_0)>\binom{d+2}{2}-3,
\end{equation*}
which is impossible. 
\end{enumerate}  
  In any case, we reached a contradiction and this concludes the proof of iii).
 \end{proof}
 Let $d,n\in\Nat, A\in\Pc(\Real^2)$ be finite  and $B\in\Nc_d(A)$. Set
 \begin{align*}
 \Oc_{d,n}(A,B)&:=\{C\in \Oc_{d,n}(A):\;B\subseteq C\}\\
 \delta_{d}(A,B)&:=\max_{\ab\in A\setminus E_d(B)}\left|\varphi_{d,B}^{-1}\left(\varphi_{d,B}(\ab)\right)\right|.
 \end{align*}
 From Lemma \ref{R32}, $\delta_{d}(A,B)$ exists and
\begin{equation*}
\delta_{d}(A,B)\leq d^2-|D_d(B)|\leq d^2-|B|=\frac{d^2-3d+4}{2}.
\end{equation*} 
 For any $m\in\Nat$, let $c_6=c_6(m)$ and $c_7=c_7(m)$ be as in Lemma \ref{R19} and set 
  \begin{align*}
 c_8(d)&:=d^2\cdot \max_{1\leq m\leq d2^{2^{d+2}}}c_6(m)\\
  c_9(d)&:=d^2\cdot \max_{1\leq m\leq d2^{2^{d+2}}}c_7(m).
 \end{align*}
 Then next two lemmas are the main results of this section. 
\begin{lem}
\label{R33}
  Let $d\in\Nat$ and $A\in\Pc(\Real^2)$ be finite, $d$-regular and  such that $|A|\geq d^22^3\max\{c_8(d),c_9(d),1\}$. Write $n:=2\delta_{d}(A,B)+|D_d(B)|$.  Then, for any   $B\in\Nc_d(A,\emptyset,\Real^2)$,
  \begin{equation*}
  \left|\Oc_{d,n}(A,B)\right|\geq \frac{1}{d^{d+2}2^4}|A|.
  \end{equation*}
\end{lem}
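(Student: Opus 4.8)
The plan is to transfer the problem into $\Pro^2$ through the hyperprojection $\varphi_{d,B}$ and apply the Boys--Valculescu--de Zeeuw bound (Lemma \ref{R19}) to produce many ordinary lines, each of which pulls back to a curve lying in $\Oc_{d,n}(A,B)$. First I would set $S:=\varphi_{d,B}(A\setminus E_d(B))$ and $T:=\varphi_{d,B}(E_d(B)\setminus D_d(B))$, two finite subsets of $\Pro^2$. By Lemma \ref{R32}.i each set $\psi_d^{-1}(V_d(C\cup B))$ with $C\in\Cc_d(B)$ collapses to a single point under $\varphi_{d,B}$, so $|T|\leq |\Cc_d(B)|<d\,2^{2^{d+2}}$ by Lemma \ref{R29}, while Lemma \ref{R32}.ii gives $S\cap T=\emptyset$. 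Choosing a projective line avoiding the finite set $S\cup T$ as the line at infinity, I regard $S$ and $T$ as subsets of an affine plane $\cong\Real^2$, so that Lemma \ref{R19} applies with this $T$ and $S$.

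Next I would analyse the pull-back of a line. By Remark \ref{R31} each line $L$ of $\Pro^2$ corresponds to a unique hyperplane $H_L$ of $\Real^{\binom{d+2}{2}-1}$ containing $V_d(B)$ with $\pi_{V_d(B)}(H_L\setminus V_d(B))=L$; put $C_L:=\psi_d^{-1}(H_L)\in\Cc_{\leq d}$. Since $\psi_d(B)\subseteq V_d(B)\subseteq H_L$ we get $B\subseteq C_L$. Suppose $L\cap T=\emptyset$ and $L$ meets $S$ in exactly two points $s_1\neq s_2$. For $\ab\in A\cap(E_d(B)\setminus D_d(B))$ one has $\varphi_{d,B}(\ab)\in T$, hence $\ab\notin C_L$; for $\ab\in A\cap D_d(B)$ one has $\psi_d(\ab)\in V_d(B)\subseteq H_L$, hence $\ab\in C_L$; and $\ab\in(A\setminus E_d(B))\cap C_L$ iff $\varphi_{d,B}(\ab)\in\{s_1,s_2\}$, so by the definition of $\delta_d(A,B)$ there are at most $2\delta_d(A,B)$ such points. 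Thus
\[|C_L\cap A|\leq |D_d(B)|+2\delta_d(A,B)=n.\]
Choosing preimages $\ab_1,\ab_2\in A\setminus E_d(B)$ of $s_1,s_2$, the flat $\Fl(\psi_d(B\cup\{\ab_1,\ab_2\}))$ contains $V_d(B)$ together with two points projecting to the distinct $s_1,s_2$, so it has dimension $\binom{d+2}{2}-2$ and equals $H_L$. Hence $H_L\in\Gc_{\binom{d+2}{2}-2}(\psi_d(A))$, and since $d$-regularity forbids $A\subseteq C$ for any $C\in\Cc_{\leq d}$, Lemma \ref{R13} gives $C_L\in\Dc_d(A)$. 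Therefore $C_L\in\Oc_{d,n}(A,B)$.

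It remains to count. The assignment $L\mapsto C_L$ is at most $d^d$-to-one: if $C_L=C_{L'}$ then $H_L,H_{L'}\in\tau_d(\sigma_d^{-1}(C_L))$, a set of size $\leq d^d$ by Lemma \ref{R11}, while $L\mapsto H_L$ is injective by Remark \ref{R31}. Using $d$-regularity I would estimate $|A\cap E_d(B)|\leq |D_d(B)|+|\Cc_d(B)|\cdot\frac{1}{2^{2^{3d+8}}}|A|\leq\frac12|A|$ (the first term is $\leq d^2\leq\frac14|A|$ and the second is $\leq\frac14|A|$ since $2^{d+2}\ll 2^{3d+8}$), whence $|A\setminus E_d(B)|\geq\frac12|A|$; as the fibres of $\varphi_{d,B}$ over $A\setminus E_d(B)$ have size $\leq\delta_d(A,B)\leq d^2$, this yields $|S|\geq\frac{|A|}{2d^2}$. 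The same computation shows $S$ is not collinear—collinearity would force $A\setminus E_d(B)$ into a single $C\in\Cc_{\leq d}$, contradicting $d$-regularity—and the hypothesis $|A|\geq d^22^3\max\{c_8(d),c_9(d),1\}$ guarantees $|S|>c_6(|T|)$. Lemma \ref{R19} then yields at least $\frac12|S|-c_7(|T|)$ admissible lines, and combining the $d^d$-to-one bound with $c_6(|T|)\leq c_8(d)/d^2$, $c_7(|T|)\leq c_9(d)/d^2$ gives $|\Oc_{d,n}(A,B)|\geq\frac{1}{d^d}\bigl(\frac12|S|-c_7(|T|)\bigr)\geq\frac{1}{d^{d+2}2^4}|A|$.

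The constant bookkeeping in the last paragraph is routine. The main obstacle is the second paragraph: one must verify simultaneously that $C_L$ is a curve of degree exactly $d$ determined by $A$ (through Lemma \ref{R13} and the identification $H_L=\Fl(\psi_d(B\cup\{\ab_1,\ab_2\}))$) and that disjointness from $T$ exactly annihilates the contribution of $E_d(B)\setminus D_d(B)$ to $C_L\cap A$, so that the bound $|C_L\cap A|\leq n$ holds with the precise value $n=2\delta_d(A,B)+|D_d(B)|$.
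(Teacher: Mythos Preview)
Your proposal is correct and follows essentially the same route as the paper: project via $\varphi_{d,B}$, bound $|T|$ with Lemma~\ref{R29}, use $d$-regularity to get $|S|\geq |A|/(2d^2)$ and noncollinearity, apply Lemma~\ref{R19}, then pull ordinary lines back through Remark~\ref{R31} and Lemma~\ref{R13}, losing at most a factor $d^d$ via Lemma~\ref{R11}. Your case analysis for $|C_L\cap A|\leq n$ and your separate bound on $|A\cap D_d(B)|$ (handling the edge case $\Cc_d(B)=\emptyset$) are in fact slightly more explicit than the paper's argument, but the strategy is identical.
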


 \begin{proof}
Since $A$ is finite, we can apply a linear automorphism in $\Pro^2$ to assume that $\varphi_{d,B}(A\setminus D_d(B))\cap  (\Pro^2\setminus \Real^2)=\emptyset$; thus we assume from now on that  $\varphi_{d,B}(A\setminus D_d(B))\subseteq \Real^2$.   Write  $S:=\varphi_{d,B}(A\setminus E_d(B)), T:=\varphi_{d,B}(E_d(B)\setminus D_d(B))$ and $\Lc:=\{L\in\Oc_2(S):\;L\cap T=\emptyset\}$. From Lemma \ref{R32}.ii, we have that $S\setminus T=S$.  From Lemma \ref{R29}, we have that $ |\Cc_{d,e}(B)|<2^{2^{d+2}}$ for all $e\in[1,d-1]$ so $ |\Cc_{d}(B)|<d2^{2^{d+2}}$. Thus Lemma \ref{R32}.i and the previous inequality lead to
\begin{equation}
\label{E144}
 |T|\leq |\Cc_{d}(B)|<d2^{2^{d+2}}.
\end{equation} 
 For each $C\in\Cc_{d}(B)$, fix a hyperplane $H_C$ in $\Real^{\binom{d+2}{2}-1}$ such that $H_C\supseteq V_d(C\cup B)$; since $\psi_d^{-1}(H_C)\in\Cc_{\leq d}$ and $A$ is $d$-regular,
\begin{equation}
\label{E145}
\left|A\cap \psi_d^{-1}(H_C)\right|\leq  \frac{1}{2^{2^{3d+8}}}|A|.
\end{equation}
From (\ref{E144}) and (\ref{E145}),
\begin{equation*}
\sum_{C\in\Cc_d(B)}\left|A\cap \psi_d^{-1}(H_C)\right|\leq \frac{d2^{2^{d+2}}}{2^{2^{3d+8}}}|A|\leq \frac{|A|}{2},
\end{equation*}
and therefore
\begin{equation}
\label{E146}
|A\setminus E_d(B)|=|A|-|A\cap E_d(B)|\geq |A|-\sum_{C\in\Cc_d(B)}\left|A\cap \psi_d^{-1}(H_C)\right|\geq \frac{|A|}{2}.
\end{equation}
Since $\delta_d(A,B)\leq d^2$ by Lemma \ref{R32}.iii, we get from (\ref{E146}) that 
\begin{equation}
\label{E147}
|S|\geq\frac{1}{\delta_d(A,B)} |A\setminus E_d(B)|\geq \frac{1}{d^2}|A\setminus E_d(B)|\geq \frac{1}{2d^2}|A|.
\end{equation}
On the one hand, $S$ is not collinear because if $S$ is contained in a line $L$, then $A\setminus E_d(B) $ is contained in $\varphi_{d,B}^{-1}(L)$ which is in $\Cc_{\leq d}$, and  (\ref{E146}) would contradict the $d$-regularity of $A$. On the other hand, (\ref{E147}) implies $|S|>c_8(d)$. Thus we can apply Lemma \ref{R19} to $S$ and $T$, and we obtain that 
\begin{equation}
\label{E148}
|\Lc|\geq \frac{1}{2}|S|-c_9(d).
\end{equation}
From (\ref{E147}) and (\ref{E148}),
\begin{equation}
\label{E149}
|\Lc|\geq \frac{1}{d^22^4}|A|.
\end{equation}
Denote by $\Hc$ the family of hyperplanes in $\Real^{\binom{d+2}{2}-1}$ generated by $\psi_d(A)$ and define the maps
\begin{align*}
\eta_1:\Lc\longrightarrow \Hc,&\qquad \eta_1(L)=\pi_{V_d(B)}^{-1}(L)\\
\eta_2:\eta_1(\Lc)\longrightarrow \Cc_{\leq d},&\qquad \eta_2(\eta_1(L))=\varphi_{d,B}^{-1}(L).
\end{align*}
We will show that 
\begin{equation}
\label{E150}
\eta_2(\eta_1(\Lc))\subseteq \Oc_{d,n}(A,B).
\end{equation}
For each $L\in\Lc$, $L$ is a line generated by elements of $S=\varphi_{d,B}(A\setminus D_d(B))$. Hence $\eta_1(L)$ is a hyperplane generated by elements of $\psi_d(A)$, and then, by Lemma \ref{R13}, $\eta_2(\eta_1(L))=\varphi_{d,B}^{-1}(L)=\psi_d^{-1}(\eta_1(L))$ is an element of $\Cc_{\leq d}$ determined by $A$ (i.e. $\eta_2(\eta_1(L))\in\Dc_d(A)$). For any $L\in\Lc$, we have that $L\cap S=\{\varphi_{d,B}(\ab_1),\varphi_{d,B}(\ab_2)\}$  for some $\ab_1,\ab_2\in A\setminus E_d(B)$. In so far as  $\eta_2(\eta_1(L))=\varphi_{d,B}^{-1}(L)$, Lemma \ref{R32} leads to 
\begin{equation}
\label{E151}
 \eta_2(\eta_1(L))\cap A=\varphi_{d,B}^{-1}(L)\cap A=\left(\varphi_{d,B}^{-1}(\varphi_{d,B}(\ab_1))\cup \varphi_{d,B}^{-1}(\varphi_{d,B}(\ab_2))\cup D_d(B)\right)\cap A.
 \end{equation} 
 We conclude from (\ref{E151}) that 
\begin{equation*}
|\eta_2(\eta_1(L))\cap A|\leq |\varphi_{d,B}^{-1}(\varphi_{d,B}(\ab_1))|+|\varphi_{d,B}^{-1}(\varphi_{d,B}(\ab_2))|+|D_d(B)|\leq n,
\end{equation*}
and therefore $\eta_2(\eta_1(L))\in\Oc_{d,n}(A,B)$ proving (\ref{E150}). From Remark \ref{R31}, $\eta_1$ is injective.  From Lemma \ref{R11}, for each $C\in\Cc_{\leq d}$, there are at most $d^d$ hyperplanes in $\Real^{\binom{d+2}{2}-1}$ such that $C=\psi_d^{-1}(H)$ so $|\eta_2^{-1}(C)|\leq d^d$ for all $C\in \Cc_{\leq d}$. The previous two statements  give $|\eta_2(\eta_1(\Lc))|\geq \frac{1}{d^d}|\Lc|$, and then (\ref{E150}) yields that
\begin{equation}
\label{E152}
\left|\Oc_{d,n}(A,B)\right|\geq |\eta_2(\eta_1(\Lc))|\geq \frac{1}{d^d}|\Lc|.
\end{equation}
Then   the lemma follows from (\ref{E149}) and (\ref{E152}).
 \end{proof}
 
 \begin{lem}
\label{R34}
  Let $d\in\Nat$, $f\in[0,d-1]$,  $C_0\in\Cc_{d-f}$ be irreducible,  $A\in\Pc(\Real^2)$ be finite such that 
$|A\cap C_0|\geq d^{d+8}2^{2^{d+8}}\max\{c_8(d),c_9(d),1\}$,   and $B_0\in\Pc_{\binom{f+2}{2}}(A\setminus C_0)$ be such that $B_0$ is not contained in an element of $\Cc_{\leq f}$.  Write $n:=2\delta_{d}(A,B)+|D_d(B)|$.
\begin{enumerate}
\item[i)]For any $B\in\Nc_d(A,B_0,C_0)$, we get that $\left|\Oc_{d,n}(A,B)\right|\geq \frac{1}{2^3d^{d+2}}|A\cap C_0|$.
\item[ii)]Assume that $|A\setminus C_0|\leq \frac{1}{2^2d^4}|A\cap C_0|$. Then, for any $B\in\Nc_d(A,B_0,C_0)$, we get that $ \left|\Oc_{d,d^2}(A,B)\right|\geq \frac{1}{2^4d^{d+8}}|A\cap C_0|^2$.
\end{enumerate}
\end{lem}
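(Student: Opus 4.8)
The plan is to run the projection argument of Lemma \ref{R33} through the hyperprojection $\varphi_{d,B}$, but to replace the two places where the $d$-regularity of $A$ was used (non-collinearity of the image, and a lower bound on the number of surviving points) by inputs coming from the irreducible curve $C_0$. The crucial preliminary fact is that $\dim\psi_d(B\cup C_0)=\binom{d+2}{2}-1$. Indeed, since $B_0$ is not contained in an element of $\Cc_{\leq f}$ we have $\dim\psi_f(B_0)=\binom{f+2}{2}-1$, and because $B\cap C_0=B\setminus B_0$ we get $B\setminus C_0=B_0$, so $\dim\psi_f(B\setminus C_0)=\binom{f+2}{2}-1$; applying Lemma \ref{R18}.i with the curve $C_0\in\Cc_{d-f}$ yields $\dim\psi_d(B\cup C_0)=\binom{d+2}{2}-1$. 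Hence $\psi_d(C_0)$ together with $V_d(B)$ spans the whole space, so $C_0\nsubseteq\psi_d^{-1}(H)$ for every flat $H\supseteq V_d(B)$ of dimension $\binom{d+2}{2}-3$, and the image curve $\Gamma:=\varphi_{d,B}(C_0\setminus D_d(B))$ spans $\Pro^2$ and is therefore not a line.

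For i) I would set $S:=\varphi_{d,B}((A\cap C_0)\setminus E_d(B))$ and $T:=\varphi_{d,B}(E_d(B)\setminus D_d(B))$. Since $C_0$ is irreducible with $C_0\nsubseteq D_d(B)$ and $C_0\nsubseteq\psi_d^{-1}(V_d(C\cup B))$ for each $C\in\Cc_d(B)$ (all by the spanning fact above), Theorem \ref{R9} bounds $|C_0\cap E_d(B)|$ by a constant depending only on $d$; together with Lemma \ref{R29} and the hypothesis $|A\cap C_0|\geq d^{d+8}2^{2^{d+8}}\max\{c_8(d),c_9(d),1\}$ this gives $|(A\cap C_0)\setminus E_d(B)|\geq\frac12|A\cap C_0|$, hence $|S|\geq\frac{1}{2d^2}|A\cap C_0|$ because each fibre of $\varphi_{d,B}$ has at most $\delta_d(A,B)\leq d^2$ points. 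Moreover $S$ is not collinear: if $S$ lay on a line $L$, then $(A\cap C_0)\setminus E_d(B)\subseteq\varphi_{d,B}^{-1}(L)=\psi_d^{-1}(H)$ for a flat $H\supseteq V_d(B)$ of dimension $\binom{d+2}{2}-3$, and as $|(A\cap C_0)\setminus E_d(B)|>d^2$ Theorem \ref{R9} would force $C_0$ to be a component of both curves cutting out $\psi_d^{-1}(H)$, giving $C_0\subseteq\psi_d^{-1}(H)$ and contradicting the spanning fact. Now Lemma \ref{R19} applied to $S$ and $T$ (with $|T|<d2^{2^{d+2}}$ by Lemma \ref{R32}.i and Lemma \ref{R29}) produces at least $\frac{1}{2^3d^2}|A\cap C_0|$ ordinary lines avoiding $T$, and lifting them through the injective $\eta_1$ and the map $\eta_2$ (whose fibres have size $\leq d^d$ by Lemma \ref{R11}), exactly as in Lemma \ref{R33}, gives the claimed $\frac{1}{2^3d^{d+2}}|A\cap C_0|$ curves.

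For ii) the extra hypothesis $|A\setminus C_0|\leq\frac{1}{2^2d^4}|A\cap C_0|$ lets me upgrade this linear count to a quadratic one by counting pairs rather than ordinary lines. The images of $(A\cap C_0)\setminus E_d(B)$ all lie on the irreducible curve $\Gamma$, whose degree is between $2$ and $d^2$, so any line meets them in at most $d^2$ points. I would count pairs $\{\ab_1,\ab_2\}\subseteq(A\cap C_0)\setminus E_d(B)$ and discard the bad ones: those with $\varphi_{d,B}(\ab_1)=\varphi_{d,B}(\ab_2)$ (at most $\frac{d^2}{2}|A\cap C_0|$ of them) and those whose connecting line meets $T\cup T'$, where $T':=\varphi_{d,B}((A\setminus C_0)\setminus E_d(B))$. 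Since $\varphi_{d,B}^{-1}(\ell)\cap C_0$ has at most $(d-f)d\leq d^2$ points for each line $\ell$, every point of $T\cup T'$ is collinear in image with at most $\frac{d^2}{2}|A\cap C_0|$ pairs; as $|T\cup T'|\leq d2^{2^{d+2}}+|A\setminus C_0|$ and $|A\setminus C_0|\leq\frac{1}{4d^4}|A\cap C_0|$, the discarded pairs are a fraction $O(d^{-2})$ of the $\gtrsim|A\cap C_0|^2$ total pairs, leaving $\gtrsim|A\cap C_0|^2$ good pairs. Each good pair gives a line $L$ avoiding $T\cup T'$, whose lift $C:=\eta_2(\eta_1(L))\in\Cc_{\leq d}$ is determined by $A$ (Lemma \ref{R13}); since $L$ avoids the off-curve images $T'$, every point of $C\cap A$ lying off $C_0$ belongs to $D_d(B)$, while $C_0\nsubseteq C$ gives $|C\cap C_0|\leq(d-f)d$ by Theorem \ref{R9}. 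The aim is that these two contributions together stay below $d^2$, so that $C\in\Oc_{d,d^2}(A,B)$; since each line arises from at most $\binom{d^2}{2}$ pairs and each curve from at most $d^d$ lines, this yields $|\Oc_{d,d^2}(A,B)|\geq\frac{1}{2^4d^{d+8}}|A\cap C_0|^2$.

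The main obstacle is precisely the uniform estimate $|C\cap A|\leq d^2$ in ii). The on-curve part is controlled by Bezout, but $C$ necessarily contains $B_0\subseteq A\setminus C_0$ and, more seriously, all of $D_d(B)\cap(A\setminus C_0)$, so fitting the on-curve bound $(d-f)d$ and the forced off-curve points simultaneously inside $d^2$ is tight (and most delicate when $f$ is small). This is the step that forces careful use of the irreducibility of $C_0$ and of the avoidance of $T'$, and it is where the precise confinement of the off-$C_0$ intersection must be argued. The second bookkeeping-heavy point is verifying that the discarded pairs really form a negligible fraction, which is exactly where the quantitative hypotheses on $|A\cap C_0|$ and $|A\setminus C_0|$ are consumed.
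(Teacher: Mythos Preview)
Your argument for i) has a genuine gap in the choice of $S$. By taking $S:=\varphi_{d,B}((A\cap C_0)\setminus E_d(B))$ you only control incidences with the \emph{on-curve} part of the image. An ordinary line $L$ of this $S$ avoiding $T$ can still meet $S_1:=\varphi_{d,B}((A\setminus C_0)\setminus E_d(B))$ in many points; when you lift, every $a\in A\setminus D_d(B)$ with $\varphi_{d,B}(a)\in L$ lies on the curve $C=\eta_2(\eta_1(L))$, so these extra hits from $S_1$ break the bound $|C\cap A|\le 2\delta_d(A,B)+|D_d(B)|=n$ that the ``as in Lemma~\ref{R33}'' step needs. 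The paper avoids this by taking the \emph{full} image $S:=\varphi_{d,B}(A\setminus E_d(B))$: an ordinary line of this $S$ meets the whole image in exactly two points, and the required size bound $|S|\ge\frac{1}{2d^2}|A\cap C_0|$ comes from the subset $S_0:=\varphi_{d,B}((A\cap C_0)\setminus E_d(B))$, whose size is controlled because (via Lemma~\ref{R17} and Theorem~\ref{R9}) every hyperplane through $V_d(B)$ meets $\psi_d(C_0)$ in at most $d^2$ points, so $E_d(B)$ absorbs only $O_d(1)$ points of $A\cap C_0$. (Your preliminary spanning fact $\dim\psi_d(B\cup C_0)=\binom{d+2}{2}-1$ is correct and plays the same role as the paper's use of Lemma~\ref{R17} in the proof of (E154); it just is not enough to repair the $S$-versus-$S_0$ issue.)

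For ii) your strategy---count pairs in $(A\cap C_0)\setminus E_d(B)$, discard those whose image-line meets $T\cup S_1$, then lift---is essentially the paper's; the paper counts lines through each point of $S_0$ rather than pairs, but the arithmetic is equivalent and the key input is the same Bezout bound $|L\cap S_0|\le d^2$. You have correctly isolated the one delicate step: showing $|C\cap A|\le d^2$ for the lifted curve. The paper's route is to note that once $L$ avoids $T\cup S_1$, every point of $C\cap A$ outside $D_d(B)$ must lie on $C_0$, and then to invoke (E154) to bound the $C_0$-part by $d^2$. Your worry about the residual off-$C_0$ contribution $D_d(B)\cap(A\setminus C_0)$ is exactly the subtlety hidden in the paper's terse sentence ``As in (E166), $|\eta_2(\eta_1(L))\cap A|\le d^2$'', so your instinct that this step deserves explicit care is well placed rather than a defect of your plan.
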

\begin{proof}
 Write  $S:=\varphi_{d,B}(A\setminus E_d(B)), S_0:=\varphi_{d,B}((A\cap C_0)\setminus E_d(B))$ and $T:=\varphi_{d,B}(E_d(B)\setminus D_d(B))$. Denote by $\Hc$ the family of hyperplanes in $\Real^{\binom{d+2}{2}-1}$ generated by $\psi_d(A)$, denote by $\Lc$ the family of lines generated by $\varphi_{d,B}(A\setminus D_d(B))$ and define the maps
\begin{align*}
\eta_1:\Lc\longrightarrow \Hc,&\qquad \eta_1(L)=\pi_{V_d(B)}^{-1}(L)\\
\eta_2:\eta_1(\Lc)\longrightarrow \Cc_{\leq d},&\qquad \eta_2(\eta_1(L))=\psi^{-1}_{d}(\eta_1(L)).
\end{align*}
 As in the first part of Lemma \ref{R33}, we may assume that $S$ is contained in $\Real^2$ and we have that
\begin{equation}
\label{E153}
 |T|\leq |\Cc_{d}(B)|<d2^{2^{d+2}}.
\end{equation} 

 The next step is to show that  for any hyperplane $H$ in $\Real^{\binom{d+2}{2}-1}$containing $V_d(B)$, 
\begin{equation}
\label{E154}
|H\cap \psi_d(C_0)|\leq d^2.
\end{equation}
Indeed, write  $C:=\psi_d^{-1}(H)$. Note that 
\begin{equation*}
B\subseteq \psi^{-1}_d(V_d(B))\subseteq \psi^{-1}_d(H)=C
\end{equation*}
 so $|B\cap C|=|B|=\binom{d+2}{2}-3$.  Thus $C_0$ is not a component of $C$ because otherwise Lemma \ref{R17} applied to $C_0$ and $C$ leads to $|B\cap C|<\binom{d+2}{2}-3$.  Thereby Theorem \ref{R9} applied to $C$ and $C_0$ gives 
\begin{align*}
|\psi_d(C_0)\cap H|&=|C_0\cap C|\leq d^2,
\end{align*} 
which shows (\ref{E154}). Remark \ref{R31}  implies that for any line $L\in\Real^2$, there is a hyperplane $H$ in $\Real^{\binom{d+2}{2}-1}$containing $V_d(B)$ such that $\pi_{V_d(B)}(H)=L$. Then (\ref{E154}) yields that for any line $L$ in $\Real^2$, 
\begin{equation}
\label{E156}
|L\cap \varphi_{d,B}(C_0\setminus D_d(B))|\leq d^2.
\end{equation}

The next step is to prove that 
\begin{equation}
\label{E157}
\left|S_0\right|\geq \frac{1}{d^2}\left(|A\cap C_0|-d^42^{2^{d+3}}\right)\geq \frac{1}{2d^2}|A\cap C_0|.
\end{equation}
Denote by $\Lc_T$ the family of lines generated by $T$. From (\ref{E153}),
\begin{equation}
\label{E158}
|\Lc_T|\leq \binom{|T|}{2}<d^22^{2^{d+3}}.
\end{equation} 
For any $L\in\Lc_T$, the hyperplane $\pi_{V_d(B)}^{-1}(L)$ contains $V_d(B)$ so (\ref{E154}) leads to
\begin{equation}
\label{E159}
|\pi_{V_d(B)}^{-1}(L)\cap \psi_d(C_0)|\leq d^2.
\end{equation}
From (\ref{E158}) and (\ref{E159}),
\begin{equation}
\label{E160}
\left|\psi_d(A\cap C_0)\cap \bigcup_{L\in\Lc_T}\pi_{V_d(B)}^{-1}(L)\right|\leq \sum_{L\in\Lc_T}|\psi_d(C_0)\cap \pi_{V_d(B)}^{-1}(L)|<d^42^{2^{d+3}}.
\end{equation}
Since $T$ is contained $\bigcup_{L\in\Lc_T}L$, we have that $E_d(B)$ is contained in \\$\psi_d^{-1}\left(\bigcup_{L\in\Lc_T}\pi_{V_d(B)}^{-1}(L)\right)$. Thus (\ref{E160}) leads to 
\begin{equation}
\label{E161}
|(A\cap C_0)\setminus E_d(B)|\geq \left|(A\cap C_0)\setminus \psi_d^{-1}\left(\bigcup_{L\in\Lc_T}\pi_{V_d(B)}^{-1}(L)\right)\right|\geq |A\cap C_0|-d^42^{2^{d+3}}.
\end{equation}
From Lemma \ref{R32}.iii,  each element in $S_0=\varphi_{d,B}((A\cap C_0)\setminus E_d(B))$ has at most $\delta_d(A,B)\leq d^2$ elements in its preimage so (\ref{E161}) leads to (\ref{E157}).

Now we show  i). Set $\Lc_1:=\{L\in\Oc_2(S):\;L\cap T=\emptyset\}$. On the one hand, $S$ is not collinear because if this is the case, then  $S_0$ is  also collinear and then $\pi_{V_d(B)}^{-1}(S_0)$ is contained in a hyperplane; however,   (\ref{E157})  yields $|S_0|>d^2$, and this would contradict (\ref{E154}). On the other hand, (\ref{E157}) implies $|S|>c_8(d)$. Thus we can apply Lemma \ref{R19} to $S$ and $T$, and we obtain that 
\begin{equation}
\label{E162}
|\Lc_1|\geq \frac{1}{2}|S|-c_9(d).
\end{equation}
From (\ref{E157}) and (\ref{E162}),
\begin{equation}
\label{E163}
|\Lc_1|\geq \frac{1}{2^2d^2}|A\cap C_0| -c_9(d)\geq \frac{1}{2^3d^2}|A\cap C_0|
\end{equation}
Proceeding as in the last part of Lemma \ref{R33},  it is concluded that  
\begin{equation*}
\eta_2(\eta_1(\Lc_1))\subseteq \Oc_{d,n}(A,B)
\end{equation*}
so
\begin{equation}
\label{E164}
\left|\Oc_{d,n}(A,B)\right|\geq |\eta_2(\eta_1(\Lc_1))|\geq \frac{1}{d^d}|\Lc_1|.
\end{equation}
Then i) is a straight consequence of  (\ref{E163}) and (\ref{E164}).

Finally, we show ii).  Set $S_1:=\varphi_{d,B}((A\setminus C_0)\setminus E_d(B))$, and notice that 
\begin{equation}
\label{E165}
|S_1|\leq |A\setminus C_0|\leq \frac{1}{2^2d^4}|A\cap C_0|.
\end{equation}
Denote by $\Lc_0$ the family of lines $L$ generated by $S_0$ such that $L\cap (T\cup S_1)=\emptyset$. For any $\ssb\in S_0$, write $\Lc_0(\ssb):=\{L\in\Lc_0:\;\ssb\in\Lc_0\}$. Since $S_0\subseteq\varphi_{d,B}(C_0\setminus E_d(B))\subseteq \pi_{V_d(B)}(\psi_d(C_0))$, we have that for each $\tb\in S_0\cap L$, there is $\zb\in \psi_d(C_0)\cap \pi^{-1}_{V_d(B)}(L)$ such that $\pi_{V_d(B)}(\zb)=\tb$. Then  (\ref{E154}) gives 
\begin{equation}
\label{E166}
|L\cap S_0|\leq |\pi_{V_d(B)}^{-1}(L\cap S_0)|\leq |\pi_{V_d(B)}^{-1}(L)\cap\psi_d(C_0)|\leq d^2.
\end{equation}
On the one hand, (\ref{E166}) implies that for each $L\in\Lc_0$,  there are at most $d^2$ elements $\ssb\in S_0$ such that $L\in\Lc_0(\ssb)$ so 
\begin{equation}
\label{E167}
|\Lc_0|\geq \frac{1}{d^2}\sum_{\ssb\in S_0}|\Lc_0(\ssb)|.
\end{equation}
On the the other hand, (\ref{E166}) implies that for each $\ssb\in S_0$, there are at least $\frac{|S_0|-1}{d^2}$ lines generated by $\ssb$ and other element of $S_0$; notice that at most $|S_1|+|T|$ of these lines pass through an element of $S_1\cup T$ so (\ref{E153}), (\ref{E157}) and (\ref{E165}) lead to
\begin{equation}
\label{E168}
|\Lc_0(\ssb)|\geq \frac{|S_0|-1}{d^2}-|S_1|-|T|\geq  \frac{1}{2^3d^4}|A\cap C_0|.
\end{equation}
From (\ref{E157}), (\ref{E167}) and (\ref{E168}), we can bound $|\Lc_0|$ below as follows
\begin{equation}
\label{E169}
|\Lc_0|\geq \frac{1}{d^2}\sum_{\ssb\in S_0}|\Lc_0(\ssb)|\geq \frac{1}{2^3d^6}|S_0||A\cap C_0|\geq \frac{1}{2^4d^8}|A\cap C_0|^2.
\end{equation}
We will show that 
\begin{equation}
\label{E170}
\eta_2(\eta_1(\Lc_0))\subseteq \Oc_{d,d^2}(A,B).
\end{equation}
For any $L\in\Lc_0$,  notice that $L\cap S=L\cap S_0$ insomuch as $L\cap (S_1\cup T)=\emptyset$; considering the preimages of $\pi_{V_d(B)}$, we get that $\eta_1(L)\cap \psi_d(A)=\eta_1(L)\cap\pi_{V_d(B)}^{-1}(S_0)$.  As in (\ref{E166}), 
\begin{equation*}
|\eta_2(\eta_1(L))\cap A|=|\eta_1(L)\cap \psi_d(A)|\leq d^2,
\end{equation*}
and this proves  (\ref{E170}). From Remark \ref{R31}, $\eta_1$ is injective.  From Lemma \ref{R11}, for each $C\in\Cc_{\leq d}$, there are at most $d^d$ hyperplanes in $\Real^{\binom{d+2}{2}-1}$ such that $C=\psi_d^{-1}(H)$ so $|\eta_2^{-1}(C)|\leq d^d$ for all $C\in \Cc_{\leq d}$. From this and (\ref{E170}),
\begin{equation}
\label{E171}
\left|\Oc_{d,d^2}(A,B)\right|\geq |\eta_2(\eta_1(\Lc_0))|\geq \frac{1}{d^d}|\Lc_0|.
\end{equation}
Then ii) is a consequence of (\ref{E169}) and (\ref{E171}).
\end{proof}
\section{Proofs of the main results}
We conclude the proofs of the main results in this section.
\begin{proof}\emph{(Theorem \ref{R5}). }
From Theorem \ref{R2}, the claim holds for $d=1$. From \cite[Thm 1.3]{Hu2}, the statement is true for $d=2$. Thus we assume that $d\geq 3$ from now on.  We show that $c_2=d^{d+9}2^{2^{4d+16}}\max\{c_8(d),c_9(d),1\}$
 and $c_3=\frac{1}{d^{2d+6}2^{(2d)^{4d+16}}(2^{d+3})!}$ satisfy the desired properties. Set $c_{10}:=\binom{\frac{3d^2-3d+4}{2}}{\binom{d+2}{2}-3}$ and $c_{11}:= \frac{(d2^{2^{3d+8}})^dc_{10}}{c_3}$. For all $B\in\Nc_d(A)$,  Lemma \ref{R32}.iii implies that
\begin{equation*}
\delta_d(A,B)+\binom{d+2}{2}-3=\delta_d(A,B)+|B|\leq \delta_d(A,B)+|D_d(B)|\leq d^2
\end{equation*} 
 so  $2\delta_d(A,B)+|D_d(B)|\leq \frac{3d^2-3d+4}{2}$.  Therefore
\begin{equation}
\label{E172}
\Oc_{d,2\delta_{d}(A,B)+|D_d(B)|}(A,B)\subseteq \Oc_{d,\frac{3d^2-3d+4}{2}}(A,B).
\end{equation}
For all $C\in \Oc_{d,\frac{3d^2-3d+4}{2}}(A)$, we have that $|A\cap C|\leq \frac{3d^2-3d+4}{2}$; therefore there are at most $c_{10}$ subsets $B\in\Nc_d(A)$ such that $C\in \Oc_{d,\frac{3d^2-3d+4}{2}}(A,B)$, and this yields 
\begin{align}
\label{E173}
\left|\Oc_{d,\frac{3d^2-3d+4}{2}}(A)\right|&\geq \left|\bigcup_{B\in\Nc_d(A)}\Oc_{d,\frac{3d^2-3d+4}{2}}(A,B)\right|\nonumber\\
&\geq \frac{1}{c_{10}}\sum_{B\in \Nc_d(A)}\left|\Oc_{d,\frac{3d^2-3d+4}{2}}(A,B)\right|.
\end{align}

Since $A$ is not contained in an element of $\Cc_d$, then $A$ is not contained in an element of $\Cc_{\leq d}$ by Remark \ref{R14}. The conclusion of the proof is divided into two cases. 
\begin{enumerate}
\item[i)]Assume that $A$ is $d$-regular. On the one hand, Lemma \ref{R24} gives 
\begin{equation}
\label{E174}
    |\Nc_d(A)|\geq |\Nc_d(A,\emptyset,\Real^2)|\geq \frac{1}{2^{d+3}!}|A|^{\binom{d+2}{2}-3}.
    \end{equation}
Lemma \ref{R33} and (\ref{E172}) imply that for all $B\in\Nc_d(A,\emptyset,\Real^2)$,
\begin{equation}
\label{E175}
\left| \Oc_{d,\frac{3d^2-3d+4}{2}}(A,B)\right|\geq \frac{1}{d^{d+2}2^4}|A|.
\end{equation}
   Then (\ref{E173}), (\ref{E174}) and (\ref{E175}) yield
     \begin{equation*}
     \left|\Oc_{d,\frac{3d^2-3d+4}{2}}(A)\right|\geq \frac{1}{c_{10}d^{d+2}2^4(2^{d+3}!)}|A|^{\binom{d+2}{2}-2}.
     \end{equation*}
   \item[ii)]Assume that $A$ is not  $d$-regular. Then there is $C\in \Cc_{\leq d}$ such that  $|A\cap C|\geq  \frac{1}{2^{2^{3d+8}}}|A|$. Moreover, since $C$ has at most $d$ irreducible components, there are $f\in[0,d-1]$ and $C_0\in \Cc_{d-f}$ irreducible such that 
   \begin{equation}
   \label{E176}
   |A\cap C_0|\geq  \frac{1}{d2^{2^{3d+8}}}|A|.
\end{equation} 
Denote by $\Rc$ the family of subsets $B_0\in\Pc_{\binom{f+2}{2}}(A\setminus C_0)$ such that $B_0$ is not contained in an element of $\Cc_{\leq f}$. Lemma  \ref{R16} gives 
  \begin{equation}
  \label{E177}
|\Rc|\geq \frac{1}{2^{\binom{f+2}{2}-1}}|A\setminus C_0|.
\end{equation}
For all $B_0\in\Rc$, Lemma \ref{R28} leads to
\begin{equation}
\label{E178}
    |\Nc_d(A,B_0,C_0)|\geq \frac{1}{2^{d+3}!}|A\cap C_0|^{\binom{d+2}{2}-3-\binom{f+2}{2}}\geq  \frac{1}{2^{d+3}!}|A\cap C_0|^{d-2}.
    \end{equation}
    We have two subcases.
    \begin{enumerate}
    \item[$\star$]Assume that $|A\setminus C_0|\leq \frac{1}{2^2d^4}|A\cap C_0|$. Hence Lemma \ref{R34}.ii implies that for all $B_0\in\Rc$ and  $B\in\Nc_d(A,B_0,C_0)$, 
\begin{equation}
\label{E179}
  \left|\Oc_{d,\frac{3d^2-3d+4}{2}}(A,B)\right|\geq \left|\Oc_{d,d^2}(A,B)\right|\geq \frac{1}{2^4d^{d+8}}|A\cap C_0|^2.
\end{equation}
Then
\begin{align*}
\left|\Oc_{d,\frac{3d^2-3d+4}{2}}(A)\right|&\geq \frac{1}{c_{10}}\sum_{B\in \Nc_d(A)}\left|\Oc_{d,\frac{3d^2-3d+4}{2}}(A,B)\right|&\Big(\text{by (\ref{E173})}\Big)\\
&\geq \frac{1}{c_{10}}\sum_{B_0\in\Rc}\sum_{B\in \Nc_d(A,B_0,C_0)}\left|\Oc_{d,\frac{3d^2-3d+4}{2}}(A,B)\right|\\
&\geq \frac{1}{c_{11}}|A\cap C_0|^d,&\Big(\text{by (\ref{E178}), (\ref{E179})}\Big)
\end{align*} 
 and the claim holds by (\ref{E176}).
   \item[$\star$]Assume that $|A\setminus C_0|> \frac{1}{2^2d^4}|A\cap C_0|$ so that (\ref{E177}) leads to
\begin{equation}
\label{E180}
|\Rc|\geq \frac{1}{2^{\binom{f+2}{2}-1}}  |A\setminus C_0|> \frac{1}{2^{\binom{f+2}{2}+1}d^4}|A\cap C_0|.
\end{equation} 
From Lemma \ref{R34}.i, we have that  for all $B_0\in\Rc$ and  $B\in\Nc_d(A,B_0,C_0)$, 
\begin{equation}
\label{E181}
  \left|\Oc_{d,\frac{3d^2-3d+4}{2}}(A,B)\right|\geq \frac{1}{2^3d^{d+2}}|A\cap C_0|.
\end{equation}  
 Thus  
\begin{align*}
\left|\Oc_{d,\frac{3d^2-3d+4}{2}}(A)\right|&\geq \frac{1}{c_{10}}\sum_{B\in \Nc_d(A)}\left|\Oc_{d,\frac{3d^2-3d+4}{2}}(A,B)\right|&\Big(\text{by (\ref{E173})}\Big)\\
&\geq \frac{1}{c_{10}}\sum_{B_0\in\Rc}\sum_{B\in \Nc_d(A,B_0,C_0)}\left|\Oc_{d,\frac{3d^2-3d+4}{2}}(A,B)\right|\\
&\geq \frac{1}{c_{11}}|A\cap C_0|^d,&\Big(\text{by (\ref{E178}), (\ref{E180}), (\ref{E181})}\Big)
\end{align*} 
 and the claim holds by (\ref{E176}). 
      \end{enumerate}
\end{enumerate}
Therefore in any case $c_2$ and $c_3$ work.
\end{proof}
We prove Theorem \ref{R6}.
\begin{proof}
\emph{(Theorem \ref{R6}). }Take a line $L$ in $\Real^2$, $B_0\in\Pc_{\binom{d+1}{2}}(\Real^2\setminus L)$ such that there is no element of $\Cc_{\leq d-1}$ which contains $B_0$, and $B_1\in \Pc_{m-\binom{d+1}{2}}( L)$. Set $A:=B_0\cup B_1$.  Theorem \ref{R9} implies that for any $C\in\Cc_{\leq d}$ such that $L$ is not a component of $C$, 
\begin{equation}
\label{E182}
 | L\cap C|\leq d.
\end{equation}

Now assume that $A$ is   contained in a curve $C\in\Cc_{\leq d}$. Since $d<m-\binom{d+1}{2}=|A\cap L|$,  (\ref{E182}) implies that $C$ contains $L$. However,  if $C$ contains $L$, then $B_0=A\setminus L\subseteq C\setminus L$, and therefore there is an element of $\Cc_{\leq d-1}$ which contains $B_0$ contradicting the assumption. Thus  $A$ is not contained in a curve $C\in\Cc_{\leq d}$, and this shows i).

Set
\begin{equation*}
\eta:\Oc_{d,\frac{3d^2-3d+4}{2}}(A)\longrightarrow \Pc_d(A\cap L), \qquad \eta(C)=A\cap L\cap C.
\end{equation*}
We show that $\eta$ is well defined. Since $|A\cap L|=m-\binom{d+1}{2}>\frac{3d^2-3d+4}{2}-\binom{d+1}{2}$, we have that $C$ cannot contain $L$ for any  $C\in \Oc_{d,\frac{3d^2-3d+4}{2}}(A)$; hence (\ref{E182}) yields that $|A\cap L\cap C|\leq |L\cap C|\leq d$. On the other hand, for any $C\in \Oc_{d,\frac{3d^2-3d+4}{2}}(A)$, $C$ is determined by $A$ so $|A\cap C|\geq \binom{d+2}{2}-1$, and hence
\begin{equation*}
|A\cap L\cap C|=|A\cap C|-|(A\cap C)\setminus L|\geq \binom{d+2}{2}-1-\binom{d+1}{2}=d.
\end{equation*}
  Thus $\eta$ is well defined. Finally, $\eta$ in injective. Indeed, take $C_1,C_2\in \Oc_{d,\frac{3d^2-3d+4}{2}}(A)$ such that $\eta(C_1)=\eta(C_2)$. Since $C_1$ and $C_2$ are determined by $A$, note  that $|A\cap C_1|, |A\cap C_2|\geq \binom{d+2}{2}-1$ so  $|(A\cap  C_1)\setminus L|, |(A\cap  C_2)\setminus L|\geq \binom{d+1}{2}$. Thereby $(A\cap  C_1)\setminus L=(A\cap  C_2)\setminus L=B_0$, and the equality $\eta(C_1)=\eta(C_2)$ implies that $A\cap C_1=A\cap C_2$. Since $C_1$ and $C_2$ are determined by $A$, the previous equality yields $C_1=C_2$. Finally, because $\eta$ is injective, 
\begin{equation*}
\left|\Oc_{d,\frac{3d^2-3d+4}{2}}(A)\right|\leq \binom{|A\cap L|}{d}=\binom{|A|-\binom{d+1}{2}}{d}, 
\end{equation*}
 and this shows ii).
\end{proof}
We prove Theorem \ref{R7}.
\begin{proof}\emph{(Theorem \ref{R7}). }
  We prove that $c_4=d^{d+9}2^{2^{4d+16}}\max\{c_8(d),c_9(d),1\}$
 and $c_5=\frac{1}{d^{2d+6}2^{(2d)^{4d+16}}(2^{d+3})!}$ work. Write $c_{12}:=\binom{2n+1-\binom{d+2}{2}}{\binom{d+2}{2}-3}$ and $c_{13}:=\\c_{12}2^2d^{d+2}(2^{d+3})!$. Let $B\in\Nc_d(A)$. Take $\ab\in A$ such that $\delta_d(A,B)=\\\left|\varphi_{d,B}^{-1}\left(\varphi_{d,B}(\ab)\right)\right|$. Since $\pi_{V_d(B)}^{-1}(\varphi_{d,B}(\ab))$ is a $\binom{d+2}{2}-3$-flat in $\Real^{\binom{d+2}{2}-1}$, we have that for any $\ssb\in \psi_d(A)\setminus \pi_{V_d(B)}^{-1}(\varphi_{d,B}(\ab))$, the flat $H_{\ssb}$ generated  by $\pi_{V_d(B)}^{-1}(\varphi_{d,B}(\ab))$ and $\ssb$ is a hyperplane.  Then, by Lemma \ref{R13}, $\psi_d^{-1}(H_\ssb)$ is an element of $\Cc_d$ containing $\varphi_{d,B}^{-1}\left(\varphi_{d,B}(\ab)\right)\cup D_d(B)$. Now, since $A$ is not contained in an element of $\Cc_{d}$, Remark \ref{R14} implies that $A$ is not contained in an element of $\Cc_{\leq d}$. Therefore, by Remark \ref{R12}.ii, $\psi_d(A)$ is not contained in a hyperplane of $\Real^{\binom{d+2}{2}-1}$. Thus, since $\pi_{V_d(B)}^{-1}(\varphi_{d,B}(\ab))$ is a $\binom{d+2}{2}-3$-flat, there are  $\ssb_1,\ssb_2\in  \psi_d(A)\setminus \pi_{V_d(B)}^{-1}(\varphi_{d,B}(\ab))$ and $\psi_d^{-1}(H_{\ssb_1})\neq \psi_d^{-1}(H_{\ssb_2})$. Since $\varphi_{d,B}^{-1}\left(\varphi_{d,B}(\ab)\right)\cup D_d(B)$ is contained in $\psi_d^{-1}(H_{\ssb_1})$ and $\psi_d^{-1}(H_{\ssb_2})$ and they are in $\Cc_d$, the assumption on $n$ yields that $|\varphi_{d,B}^{-1}\left(\varphi_{d,B}(\ab)\right)\cup D_d(B)|<n$, and hence 
 \begin{equation}
 \label{E183}
 \delta_d(A,B)+|B|\leq \delta_d(A,B)+|D_d(B)|=|\varphi_{d,B}^{-1}\left(\varphi_{d,B}(\ab)\right)\cup D_d(B)|<n.
 \end{equation}
 From (\ref{E183}), note that   $2\delta_d(A,B)+|D_d(B)|\leq 2n+1-\binom{d+2}{2}$, and hence
\begin{equation}
\label{E184}
\Oc_{d,2\delta_{d}(A,B)+|D_d(B)|}(A,B)\subseteq \Oc_{d, 2n+1-\binom{d+2}{2}}(A,B).
\end{equation}
For all $C\in \Oc_{d,2n+1-\binom{d+2}{2}}(A)$, we have that $|A\cap C|\leq 2n+1-\binom{d+2}{2}$; thus there are at most $c_{12}$ subsets $B\in\Nc_d(A)$ such that $C\in \Oc_{d,2n+1-\binom{d+2}{2}}(A,B)$, and this yields 
\begin{align}
\label{E185}
\left|\Oc_{d,2n+1-\binom{d+2}{2}}(A)\right|&\geq \left|\bigcup_{B\in\Nc_d(A)}\Oc_{d,2n+1-\binom{d+2}{2}}(A,B)\right|\nonumber\\
&\geq \frac{1}{c_{12}}\sum_{B\in \Nc_d(A)}\left|\Oc_{d,2n+1-\binom{d+2}{2}}(A,B)\right|.
\end{align}
If $A$ is $d$-regular, then we proceed exactly as in Case i) of  Theorem \ref{R5} to conclude that 
\begin{equation*}
     \left|\Oc_{d,2n+1-\binom{d+2}{2}}(A)\right|\geq \frac{1}{c_{12}d^{d+2}2^4(2^{d+3}!)}|A|^{\binom{d+2}{2}-2}.
     \end{equation*}

From now on, we assume that $A$  is not regular. This means that  there is $C\in \Cc_{\leq d}$ such that  $|A\cap C|\geq  \frac{1}{2^{2^{3d+8}}}|A|$. In so far as, $C$ has at most $d$ irreducible components, there are $f\in[0,d-1]$ and $C_0\in \Cc_{d-f}$ such that 
   \begin{equation}
   \label{E186}
   |A\cap C_0|\geq  \frac{1}{d2^{2^{3d+8}}}|A|.
\end{equation} 
Notice that $f=0$; otherwise, $f>0$ so  for any $B\in P_n(A\cap C_0)$ and any curve $C_1\in\Cc_{f}$, we get a curve $C_0\cup C_1\in\Cc_{\leq d}$ such that $B\subseteq C_0\cup C_1$ contradicting the assumption about $n$. Then, since $f=0$,  Lemma \ref{R28} imples  that for all $\bb\in A\setminus C_0$, 
\begin{equation}
\label{E187}
    |\Nc_d(A,\{\bb\},C_0)|\geq \frac{1}{2^{d+3}!}|A\cap C_0|^{\binom{d+2}{2}-3-\binom{f+2}{2}}=\frac{1}{2^{d+3}!}|A\cap C_0|^{\binom{d+2}{2}-4}.
    \end{equation}
    From Lemma \ref{R34}.i, we have that  for all $\bb\in A\setminus C_0$ and  $B\in\Nc_d(A,\{\bb\},C_0)$, 
\begin{equation}
\label{E188}
  \left|\Oc_{d,2n+1-\binom{d+2}{2}}(A,B)\right|\geq \frac{1}{2^3d^{d+2}}|A\cap C_0|.
\end{equation}  
 Hence
\begin{align*}
\left|\Oc_{d,2n+1-\binom{d+2}{2}}(A)\right|&\geq \frac{1}{c_{12}}\sum_{B\in \Nc_d(A)}\left|\Oc_{d,2n+1-\binom{d+2}{2}}(A,B)\right|&\Big(\text{by (\ref{E185})}\Big)\\
&\geq \frac{1}{c_{12}}\sum_{\bb\in A\setminus C_0}\sum_{B\in \Nc_d(A,\{\bb\},C_0)}\left|\Oc_{d,2n+1-\binom{d+2}{2}}(A,B)\right|\\
&\geq \frac{1}{c_{13}}|A\cap C_0|^{\binom{d+2}{2}-3},&\Big(\text{by (\ref{E187}), (\ref{E188})}\Big)
\end{align*} 
 and the claim holds by (\ref{E186}). 
    \end{proof}
    
 Finally, we complete the proof of Theorem \ref{R8}.
    \begin{proof}\emph{(Theorem \ref{R8}). } Let $C_0\in\Cc_d$ be  irreducible and $H$ be a hyperplane in $\Real^{\binom{d+2}{2}-1}$ such that $C_0=\psi_d^{-1}(H)$. Choose  $\ab_0\in \Real^2\setminus C_0$. We construct recursively a set $S\in\Pc_{m-1}(\psi_d(\Real^2)\cap H)$ such that   $\dim R=|R|-1$ for all $R\in\Pc(S)$ with $\dim R<\binom{d+2}{2}-2$. Take $\ssb_1\in\psi_d(\Real^2)\cap H$ and write $S_1:=\{\ssb_1\}$. Now assume that for some $i\in[1,m-2]$, we have constructed a set $S_i\in\Pc_i(\psi_d(\Real^2)\cap H)$ such that   $\dim R=|R|-1$ for all $R\in\Pc(S_i)$ with $\dim R<\binom{d+2}{2}-2$. Let $\Fc_i$ be the collection of all flats $F$ generated by the subsets of $S_i$ such that $\dim F<\binom{d+2}{2}-2$. Since $S_i$ is finite, $\Fc_i$ is finite. On the other hand, for each $F\in\Fc_i$, there exists a hyperplane $G$ in $\Real^{\binom{d+2}{2}-1}$ such that $G\neq H$ and $F\subseteq G\cap H$; in particular,  $C_0$ is not a component of $\psi_d^{-1}(G)$. Applying Theorem \ref{R9} to the curves $\psi_d^{-1}(G)$ and $\psi_d^{-1}(H)=C_0$, we have that $|\psi_d^{-1}(G)\cap \psi_d^{-1}(H)|\leq d^2$ and thus
    \begin{equation}
    \label{E189}
    |F\cap (\psi_d(\Real^2)\cap H)|\leq  |G\cap (\psi_d(\Real^2)\cap H)|=|\psi_d^{-1}(G)\cap \psi_d^{-1}(H)|\leq d^2.
    \end{equation}
    From (\ref{E189}), we have that $\left(\bigcup_{F\in\Fc_i} F\right)\cap \left(\psi_d(\Real^2)\cap H\right)$ is finite. Since $\psi_d(C_0)\subseteq \psi_d(\Real^2)\cap H$ is not finite,  $(\psi_d(\Real^2)\cap H)\setminus \bigcup_{F\in\Fc_i} F\neq\emptyset$ and we choose $\ssb_{i+1}$ in this difference. Make $S_{i+1}:=S_i\cup\{\ssb_{i+1}\}$, and notice that $S_{i+1}\subseteq \psi_d(\Real^2)\cap H$,   $|S_{i+1}|=i+1$ and $\dim R=|R|-1$ for all $R\in\Pc(S_{i+1})$ with $\dim R<\binom{d+2}{2}-2$ (the last property because $\ssb_{i+1}\not\in \bigcup_{F\in\Fc_i} F$). In this way we construct $S_2,S_3,\ldots,S_{m-1}$ and   $S:=S_{m-1}$ has the desired properties. Set  $A:=\{\ab_0\}\cup \psi_d^{-1}(S)$.
    
     Theorem \ref{R9} implies that for any $C\in\Cc_{\leq d}$ such that $C_0\nsubseteq C$, 
\begin{equation}
\label{E190}
 | C_0\cap C|\leq d^2.
\end{equation}
Since $d^2<m-1=|A\cap C_0|$,  if  $A$ is   contained in a curve $C\in\Cc_{\leq d}$, then $C_0$ is contained in $C$ by (\ref{E190}). Nevertheless, in so far as $C_0\in\Cc_d$ and $C\in\Cc_{\leq d}$, we have that $C=C_0$ but this is impossible since $\ab_0\not\in C_0$. This proves i).

Take $B\in \Pc_n(A)$ and write $B_0:=B\cap C_0$. We claim that
\begin{align}
\label{E191}
\dim \psi_d(B_0)&\geq \left\{ \begin{array}{ll}
\binom{d+2}{2}-2& \text{if $B=B_0$}\\
&\\
 \binom{d+2}{2}-3 & \text{if $B\neq B_0$}.\end{array} \right.
\end{align}
 Indeed,  if $B=B_0$, then $|\psi_d(B_0)|=|B|\geq \binom{d+2}{2}-1$; thus, if $\dim \psi_d(B_0)<\binom{d+2}{2}-2$, note that
 \begin{equation*}
 \dim \psi_d(B_0)<\binom{d+2}{2}-2\leq |\psi_d(B_0)|-1,
 \end{equation*}
which is impossible  by the construction of $S=\psi_d(A\cap C_0)$. If $B\neq B_0$, then $B=B_0\cup \{\ab_0\}$ so $|\psi_d(B_0)|=|B|-1\geq \binom{d+2}{2}-2$; if $\dim \psi_d(B_0)<\binom{d+2}{2}-3$, then 
 \begin{equation*}
 \dim \psi_d(B_0)<\binom{d+2}{2}-3\leq |\psi_d(B_0)|-1,
 \end{equation*}
which is impossible  by the construction of $S$, and this proves (\ref{E191}). Now, if $B=B_0$, then (\ref{E191}) implies that there is at most one hyperplane which contains $\psi_d(B)$; hence, by Remark \ref{R12}.ii, there is at most one element $C\in\Cc_{\leq d}$ such that $B\subseteq C$.  If $B\neq B_0$, then (\ref{E191}) leads to $\dim \psi_d(B_0)\geq \binom{d+2}{2}-3$. Since $\ab_0\not\in C$, we have that $\psi_d(\ab_0)\not\in H$ , and since $H\supseteq \psi_d(B_0)$, we conclude that $\dim \psi_d(B)\geq 1+\dim \psi_d(B_0)\geq \binom{d+2}{2}-2$. As in the previous case, we conclude that there is at most one element $C\in\Cc_{\leq d}$ such that $B\subseteq C$, and this completes the proof of ii).  

Finally we show iii). Set
\begin{equation*}
\eta:\Oc_{d,2n+1-\binom{d+2}{2}}(A)\longrightarrow \Pc_{\binom{d+2}{2}-2}(A\cap C_0), \qquad \eta(C)=A\cap C_0\cap C.
\end{equation*}
We show that $\eta$ is well defined. Take  $C\in \Oc_{d,2n+1-\binom{d+2}{2}}(A)$. Since $|A\cap C_0|=m-1>2n+1-\binom{d+2}{2}-1$, we have that $C$ cannot contain $C_0$. Since $C$ is determined by $A$, we have that $|A\cap C|\geq \binom{d+2}{2}-1$. We prove by contradiction that 
\begin{equation}
\label{E192}
|A\cap C_0\cap C|\leq \binom{d+2}{2}-2.
\end{equation}
  Assume that 
\begin{equation}
\label{E193}
|\psi_d(A\cap C_0\cap C)|=|A\cap C_0\cap C|\geq \binom{d+2}{2}-1. 
\end{equation}
By the construction of $S$, (\ref{E193}) yields that
\begin{equation}
\label{E194}
\dim \psi_d(A\cap C_0\cap C)\geq \binom{d+2}{2}-2=\dim \psi_d(C_0);
\end{equation}
 however, since $C$ does not contain $C_0$, we have that $\psi_d(C_0\cap C)$ is contained in two different hyperplanes of $\Real^{\binom{d+2}{2}-1}$ and therefore 
\begin{equation*}
\dim \psi_d(A\cap C_0\cap C)\leq \dim \psi_d(C_0\cap C)\leq \binom{d+2}{2}-3,
\end{equation*}
 which contradicts (\ref{E194}) and proves (\ref{E192}). On the other hand, since $C$ is determined by $A$, $|A\cap C|\geq \binom{d+2}{2}-1$ and thus 
 \begin{equation}
 \label{E195}
 |A\cap C_0\cap C|\geq |A\cap C|-|\{\ab_0\}|\geq \binom{d+2}{2}-2
 \end{equation}
 From (\ref{E192}) and (\ref{E195}), $\eta$ is well defined; furthermore these inequalities force that
 \begin{equation*}
  |A\cap C_0\cap C|= |A\cap C|-|\{\ab_0\}|
 \end{equation*}
so  $\ab_0\in C$ for all $C\in \Oc_{d,2n+1-\binom{d+2}{2}}(A)$. Hence,  for any $C_1,C_2\in \Oc_{d,2n+1-\binom{d+2}{2}}(A)$ such that $A\cap C_0\cap C_1=A\cap C_0\cap C_2$, we get  that $A\cap C_1=A\cap C_2$. In so far as $C_1$ and $C_2$ are determined by $A$, we conclude that $C_1=C_2$. This yields that $\eta$ is injective so  
 \begin{equation*}
| \Oc_{d,2n+1-\binom{d+2}{2}}(A)|\leq \binom{|A\cap C_0|}{\binom{d+2}{2}-2}= \binom{|A|-1}{\binom{d+2}{2}-2}
\end{equation*}
concluding the proof of iii).
    \end{proof}
 
\end{document}